\newtheorem{theorem}{Theorem}
\newtheorem{lemma}{Lemma}
\newtheorem{remark}{Remark}
\newtheorem{definition}{Definition}
\newtheorem{assumption}{Assumption}
\numberwithin{theorem}{section}
\numberwithin{lemma}{section}
\numberwithin{proposition}{section}
\numberwithin{equation}{section}
\numberwithin{remark}{section}
\numberwithin{remarks}{section}
\numberwithin{definition}{section}
\numberwithin{example}{section}
\numberwithin{corollary}{section}
\newcommand{\bz}{{\bf z}}
\newcommand{\rev}[1]{#1}
\def\R{\mathbb{R}}
\def\L2{{L^2(\O)}}
\newcommand {\D} {\displaystyle}
\def\N{\mathcal{N}_h}
\def\T{\mathcal{T}}
\def\S{\mathbb{S}}
\def\O{\Omega}
\def\x{{\bf x}}
\begin{document}
%%%%%%%%%%%%%%%%%%%%%%%%%%%%%%%%%%%%
\title[Numerics for bi-harmonic wave maps into spheres]
{Numerical approximation of bi-harmonic wave maps into spheres}

\author{\v{L}ubom\'{i}r Ba\v{n}as}
\address{Department of Mathematics, Bielefeld University, 33501 Bielefeld, Germany}
\email{banas@math.uni-bielefeld.de}

\author{Sebastian Herr}
\address{Department of Mathematics, Bielefeld University, 33501 Bielefeld, Germany}
\email{herr@math.uni-bielefeld.de}

\date{\today}

%\date{Received: date / Revised version: date}

\begin{abstract} 
We construct a structure preserving non-conforming finite element approximation scheme for the bi-harmonic wave maps into spheres equation.
It satisfies a discrete energy law and preserves the non-convex sphere constraint of the continuous problem.
The discrete sphere constraint is enforced at the mesh-points via a discrete Lagrange multiplier. This approach
restricts the spatial approximation to the (non-conforming) linear finite elements.
We show that the numerical approximation converges to the weak solution of the continuous problem in spatial dimension $d=1$.
The convergence analysis
in dimensions $d>1$ is  complicated by the lack of a discrete product rule as well as the low regularity of the numerical approximation in the non-conforming setting.
Hence, we show convergence of the numerical approximation in higher-dimensions by introducing additional stabilization terms in the numerical approximation.
We present numerical experiments to demonstrate the performance of the proposed numerical approximation
and to illustrate the regularizing effect of the bi-Laplacian which prevents the formation of singularities.
\end{abstract}

\keywords{bi-harmonic wave maps into spheres, fully discrete numerical approximation, numerical analysis, convergence}

\thanks{Funded by the Deutsche Forschungsgemeinschaft (DFG, German Research Foundation) -- Project-ID 317210226 -- SFB 1283}
\maketitle
%%%%%%%%%%%%%%%%%%%%%%%%
\section{Introduction}\label{sec_intro}

%Let $\Omega \subset \R^d$, $d \leq 3$ be a bounded domain, and $\S^2 \subset \R^3$ (for $m \geq 2$) be the unit sphere in $\R^3$.

Let $T>0$, $\Omega \subset \R^d$ be an open, bounded, (convex) polyhedral domain, $d \leq 3$, and $ \Omega_T:= (0,T)\times \Omega$. Further,  let $\S^2 \subset \R^3$ denote the unit sphere.
We consider bi-harmonic wave maps $\mathbf{u}:(0,T)\times \Omega\to \S^2$. Formally, these are critical points of the action functional
$$
\Phi(\mathbf{u}):=\frac12 \int_{\Omega_T} |\partial_t\mathbf{u}|^2-|\Delta \mathbf{u}|^2 \mathrm{d}(t,x)
$$
under the sphere target constraint.
More precisely, we consider the Cauchy problem defined by the Euler-Lagrange equation, i.e.\
\begin{align}\label{biwave}
\partial_t^2 \mathbf{u} + \Delta^2 \mathbf{u} & \perp T_{\textbf{u}}(\S^2) \qquad  &\mbox{in } & \Omega_T,\qquad\qquad
\\ \label{wave1b}
\frac{\partial \mathbf{u}}{\partial {\bf n}} & = \frac{\partial \Delta \mathbf{u}}{\partial {\bf n}} = 0 \qquad
 & \mbox{on } &\partial \Omega,
\\ \label{wave1c}
\mathbf{u}(0) & = \mathbf{u}_0, \quad
\partial_t \mathbf{u}(0) = \mathbf{v}_0   \quad & \mbox{in } & \Omega,
\end{align}
where ${\bf n}$ is the outward unit normal vector field to $\partial \Omega$ and $T_{\textbf{u}(t,x)}(\S^2) $ denotes the tangent space to $\mathbf{u}(t,x)\in \S^2$.
\eqref{biwave} can be equivalently written as
\begin{equation}\label{biwave1}
\partial_t^2 \mathbf{u} + \Delta^2 \mathbf{u}=\lambda_w u, \; \text{with}\;
\lambda_w = \vert \Delta \mathbf{u}\vert^2 - \vert \partial_t \mathbf{u}\vert^2 - \Delta |\nabla \mathbf{u}| - 2 \mathrm{div} \langle \Delta \mathbf{u}, \nabla \mathbf{u}\rangle.
\end{equation}
There are a number of analytical results on this problem in the  setting $\Omega=\R^d$ without boundary condition.
In \cite{biharm}, weak solutions in the energy space are constructed by a suitable Ginzburg-Landau-type approximation. Furthermore, in \cite{hlss20} local well-posedness for initial data of sufficiently high Sobolev regularity has been proven (for more general targets), including a blow-up criterion. In dimension $d=1,2$, the energy conservation has been used to prove global existence \cite{s23}.

As far as we are aware the present paper is the first one addressing the numerical approximation of the bi-harmonic wave maps into spheres equation.

There are a number of results on the numerical analysis for (second order) wave maps, for instance in \cite{bfp07,num_harm09,kw14,bar15} for the sphere target
and with more general target manifolds in \cite{bar09}, \cite{num_move}.
The numerical approximation of wave maps shares many features with the numerical
approximation of its parabolic counterparts, the harmonic heat flow equation and with the Landau-Lifshitz-Gilbert equation. Concerning the numerical analysis and further references of these well-studied problems we refer to \cite{alo08,book_ferro,bar16}. 

\rev{In contrast to the aforementioned second order problems, which employ a $H^1$-conforming finite element spatial discretization,}
a conforming finite element discretization of problems involving (fourth order) bi-harmonic operators requires at least $C^1$ regularity of the employed polynomial approximation spaces, cf. \cite{book_brenner}.
Apart from the fact that the $H^2$-conforming finite element methods are rather complicated to implement, cf.\ e.g.\ \cite{p22}, the requirement to satisfy the sphere constraint
on the discrete level restricts the spatial discretization to a piecewise linear ($H^1$-conforming) finite element setting.
Non-conforming methods for fourth order problems based on piecewise linear finite elements have been studied in  \cite{be92,eymard11}.

Here, in order to show convergence of the numerical approximation we require strong convergence of the gradient of the numerical approximation,
which is complicated by the fact the the discrete solution is only $H^1$-regular. This prohibits the application of the standard Aubin-Lions-Simon compactness result for the compact embedding $H^2\subset H^1$. We derive a new discrete compactness result for the non-conforming setting.

The paper is organized as follows. Notation and preliminary results are introduced in Section~\ref{sec_not}.
The numerical scheme is introduced in Section~\ref{sec_num} and its convergence towards a weak solution for $d=1$ is
shown in Section~\ref{sec_conv}. In Section~\ref{sec_stab} we discuss a stabilized variant of the numerical scheme that is convergent in higher spatial dimensions.
In Section \ref{sec_conv} we present a variant of the numerical scheme which preserves the discrete energy.
Computational experiments are given in Section~\ref{sec_exp}.

\section{Notation and preliminaries}\label{sec_not}

We employ the following notation throughout this paper.
By $\mathbf{L}^2$, $\mathbf{L}^\infty$, $\mathbf{H}^1$ we denote the $\mathbb{R}^3$-valued function spaces
$L^2(\Omega, \mathbb{R}^3)$, $L^\infty(\Omega, \mathbb{R}^3)$, $H^{1}(\Omega, \mathbb{R}^3)$, respectively.
The standard norm in Lebesgue space $L^p(\Omega)$, $p>0$ is denoted as $\|\cdot\|_{L^p}$. For $p=2$ we denote the inner product on $L^2(\Omega)$ as $(\cdot,\cdot) := (\cdot,\cdot)_{L^2(\Omega)}$
and the corresponding norm by $\|\cdot\|:=\|\cdot\|_{L^2}$; the inner product on  $L^2(\omega)$ for $\omega\subset \Omega$ is denoted as $(\cdot,\cdot)_\omega := (\cdot,\cdot)_{L^2(\omega)}$.

Let $V_h\equiv V_h(\mathcal{T}_h) \subset H^{1}(\Omega)$ be the lowest order $H^1$-conforming finite element space subordinate to a simplicial partition
$\T_h$ of $\Omega$, and ${\bf V}_h := [V_h]^3$. By $\N$, we denote the set of all nodes, i.e., vertices of all elements $T \in \T_h$. Furthermore,
we consider the nodal basis $\{\varphi_\bz;\, \bz \in \N\}$ of $V_h$ consisting of piecewise linear continuous functions that satisfy $\varphi_{\bz_k}(\bz_l) = \delta_{kl}$ for $\bz_k, \bz_l\in \mathcal{N}_h$.
For $\bz \in \mathcal{N}_h$ we denote $\omega_\bz = \mathrm{supp}(\varphi_\bz)$. For $T\in \mathcal{T}_h$ we denote $h_T = \mathrm{diam}(T)$
and the mesh size is denoted as $h=\max_{T\in\mathcal{T}}h_T$.

We define the nodal interpolation operator $\mathcal{I}_h: C(\overline{\Omega}) \rightarrow V_h$ by
${\mathcal I}_h w := \sum_{\bz \in \N} w(\bz) \varphi_{\bz}$, for all $w \in C(\overline{\Omega})$.
The following interpolation estimate holds for $p\in (1, \infty]$, \cite{book_brenner}:
\begin{equation}\label{est_interpol}
\|w - {\mathcal I}_h w \|_{L^p} + h \|\nabla(w - {\mathcal I}_h w) \|_{L^p} \leq C h^2 \|D^2 w\|_{L^p} \quad \text{for } w\in W^{2,p}(\Omega),
\end{equation}
where $h = \max_{T\in \T_h} \mathrm{diam}(T)$.
We also note that the interpolation operator is $W^{1,\infty}$-stable, cf. e.g. \cite[Example 3.7]{book_bartels_fem}, i.e.:
$\|\mathcal{I}_h w\|_{W^{1,\infty}} \leq C \|w\|_{W^{1,\infty}}$ for $w \in W^{1,\infty}$.

We denote the inner product on $\mathbb{R}^3$ by $\langle \cdot, \cdot \rangle$
and define the discrete version of the $L^2$-inner product on ${\bf V}_h$ as
$$
(\mathbf{v}, \mathbf{w})_h := \int_{\Omega} {\mathcal I}_h \langle \mathbf{v}(x), \mathbf{w}(x)\rangle \,
{\rm d}{\bf x} = \sum_{\bz \in \N}  \beta_{\bz}\langle \mathbf{v}(\bz), \mathbf{w}(\bz)\rangle \qquad \forall \, \mathbf{v},\mathbf{w} \in [C(\overline{\Omega})]^3,
$$
where $\beta_{\bz} = \frac{|\omega_\bz|}{d+1} = \int_{\Omega} \varphi_{\bz}\, {\rm d}{\bf x}$, for all $\bz \in \N$.

It is well known the discrete inner product satisfies
\begin{align}\label{convdiscnorm}
|(\mathbf{v}, \mathbf{w})_h - (\mathbf{v}, \mathbf{w})| & \leq Ch \|\mathbf{v}\|\|\nabla \mathbf{w}\|\,,
\end{align}
and the corresponding norm  $\|\mathbf{v}\|_h^2 := (\mathbf{v}, \mathbf{v})_h$ is equivalent to the $L^2$-norm, i.e.,
\begin{align}\label{normeq}
 \|\mathbf{v}\|_h^2 \leq  \|\mathbf{v}\|^2 & \leq  (d+2) \|\mathbf{v}\|_h^2\,,
\end{align}
for $\mathbf{v},\mathbf{w}\in {\bf V}_h$.

We partition the time interval $[0,T]$ into equidistant subintervals $(t_{n-1}, t_n)$, $n=1,\dots,N$, $t_n = n \tau$  with a time-step size $\tau = \frac{T}{N} > 0$.
For a sequence $\{\psi^n \}_{n \geq 0}$ we denote 
$d_t \psi^n := \tau^{-1} \{ \psi^n - \psi^{n-1}\}$, and $\psi^{n+1/2} := \frac{1}{2} \{ \psi^{n+1} + \psi^n\}$
with $n \geq 1$.
By $C, \tilde{C}> 0$ we denote generic bounded constants which may depend on $\Omega$ and $T$ but are independent of $\tau$, $h$.

For $u \in H^{1}(\Omega)$ we define the discrete Laplacian $\Delta_h : H^{1}(\Omega) \rightarrow V_h$ as
\begin{equation}\label{disc_lap}
-(\Delta_h u, \varphi_h)_h = (\nabla u, \nabla \varphi_h)\qquad \forall \varphi_h \in V_h\,.
\end{equation}
The discrete Laplacian satisfies an inverse estimate
\begin{equation}\label{inv_disclap}
\|\Delta_h \phi_h\|_h^2 \leq \frac{1}{h^2}\|\nabla \phi_h\|^2\qquad \forall \phi_h \in V_h.
\end{equation}

To show convergence of the numerical apprpximation we require the following conditions to be satisfied.
\begin{assumption}\label{ass1}
We assume that the partition $\mathcal{T}_h$ satisfies:
\begin{itemize}
\item $\mathcal{T}_h$ is quasiuniform and consists of right-angled simplices, i.e. that, one of the following holds:
each simplex $T\in \mathcal{T}_h$ has one vertex such that all edges intersect at this vertex at right angles (Type-$1$ triangulation, $d=2,3$),
each simplex $T\in \mathcal{T}_h$ has two vertices at which two edges intersect at right angles (Type-$2$ triangulation, $d=3$).
\item
$\mathcal{T}_h$ is consistent with the discrete Laplace operator (\ref{disc_lap}) in the sense that
that for all $\pmb{\Phi} \in C^{\infty}_0 \bigl( [0,T); C^{\infty}_0(\Omega, \R^3)\bigr)$,
the discrete Laplacian $\Delta_h : C(\overline{\Omega}) \rightarrow V_h(\mathcal{T}_h)$
converges strongly for $h\rightarrow 0$, i.e.,
$$
\|\Delta_h\mathcal{I}_h\pmb{\Phi} - \Delta \pmb{\Phi} \|_h \rightarrow 0 \quad \mathrm{for}\quad h\rightarrow 0.
$$
\end{itemize}
\end{assumption}

\begin{remark}
The conditions of Assumption~\ref{ass1} can be verified explicitly for certain uniform partitions $\mathcal{T}_h$ of $\Omega$
on which the discrete Laplacian agrees with the finite difference approximation of the Laplace operator.
For $d=2$ the triangulation consisting of a union of halved squares with side $h$ (Type-$1$ triangulation)
agrees with the $5$-point finite difference approximation and for $d=3$
a partition that consists of cubes with side $h$ divided into six tetrahedra (Type-$2$ triangulation) agrees with the $7$-point finite difference
approximation.
In these two cases the standard theory of finite difference approximation implies that that
$$
\Delta_h\phi({\bf z}) - \Delta\phi ({\bf z}) = \mathcal{O}(h^2) \qquad \forall \phi \in C^4(\overline{\Omega})
$$
for all ${\bf z}\in \mathcal{N}_h$.
We note that on general meshes only weak convergence of the discrete Laplace operator can be expected, cf. \cite{eymard11}.
\end{remark}

\subsection{Weak solution}\label{sec_weak}
\begin{definition}\label{def_weak}
Let $(\, \mathbf{u}_0, {\bf v}_0\, ) \in H^{2}(\Omega, \R^3) \times L^2(\Omega, \R^3)$ be such that
$\vert \mathbf{u}_0\vert = 1$, and $\langle \mathbf{u}_0, {\bf v}_0\rangle 
= 0$  a.e.~in $\Omega \subset \R^d$.
We call a function $\mathbf{u}: \Omega_T \rightarrow \R^3$, such that
$\partial_t \mathbf{u}, \Delta \mathbf{u} \in L^2(\Omega_T, \R^3)$ a {\em weak solution} of (\ref{biwave}), if there holds:
\begin{enumerate}
\item[(i)] $\mathbf{u}(0) = \mathbf{u}_0$, and
      $\partial_t \mathbf{u}(0) = {\bf v}_0$ a.e. in $\Omega$,
\item[(ii)] $\vert \mathbf{u}\vert = 1$ a.e. in $\Omega_T$,
\item[(iii)] for all $\pmb{\Phi} \in C^{\infty}_0 \bigl( [0,T); W^{2,2}(\Omega, \R^3)\bigr)$ there holds
\begin{align}\label{weakform}
-\int_0^T \bigl( \partial_t \mathbf{u},\partial_t  (\mathbf{u} \times \pmb{\Phi})\bigr)\, {\rm d}t + 
\int_0^T \bigl( \Delta \mathbf{u} , \Delta (\mathbf{u}\times\pmb{\Phi})\bigr)\, {\rm d}t
= \bigl( {\bf v}_0 \wedge \mathbf{u}_0, \pmb{\Phi}(0)\bigr).
%\\
%\int_0^T \bigl( \mathbf{w}, \pmb{\Phi}\bigr)\, {\rm d}t = \int_0^T \bigl( \nabla \mathbf{u}, \nabla \pmb{\Phi}\bigr)\, {\rm d}t\,.
\end{align}
\end{enumerate}
\end{definition}
Recently, (global) existence of weak solutions ${\bf u}: [0,T]\times \Omega \rightarrow \mathbb{S}^2$ (for $\Omega \equiv \mathbb{R}^d$) has been established in \cite{biharm}.
Note that for $\mathbf{u},\pmb{\Phi}$ as in Definition \ref{def_weak} we have $ \bigl( \partial_t \mathbf{u},\partial_t  (\mathbf{u} \times \pmb{\Phi})\bigr)= \bigl( \partial_t \mathbf{u},\mathbf{u} \times \partial_t  \pmb{\Phi}\bigr)$ and that  $|\nabla u|^2\leq |\Delta u|$ a.e..
\begin{comment} %not relevant anymore
\begin{remark}
The convergence of the numerical approximation might require a slightly higher regularity
$\Delta \mathbf{u} \in L^2\big(0,T; H^{1}(\Omega, \R^3)\big)$; then we use the (equivalent) ''mixed'' weak formulation with $\mathbf{w} := -\Delta \mathbf{u}$
\begin{align}
-\int_0^T \bigl( \partial_t \mathbf{u} \wedge \mathbf{u}, \pmb{\Phi}_t\bigr)\, {\rm d}t + 
\int_0^T \bigl( \nabla \mathbf{w} , \nabla (\mathbf{u}\wedge \pmb{\Phi})\bigr)\, {\rm d}t
= \bigl( {\bf v}_0 \wedge \mathbf{u}_0, \pmb{\Phi}(0, \cdot)\bigr)\, , 
\\
\int_0^T \bigl( \mathbf{w}, \pmb{\Phi}\bigr)\, {\rm d}t = \int_0^T \bigl( \nabla \mathbf{u}, \nabla \pmb{\Phi}\bigr)\, {\rm d}t\,.
\end{align}
\end{remark}
\end{comment}

We remark that our definition of a weak solution differs slightly from \cite[(1.4)]{biharm} in the sense that in Definition \ref{def_weak} we restrict to a (dense) class of more regular test functions, but this is inessential. Similar to \cite[Lemma 2.1]{biharm} we observe that any sufficiently regular mapping $\mathbf{u}: \Omega_T \rightarrow \mathbb{R}^{m}$ satisfying
\begin{equation*}%\label{weak1}
\int_{\Omega_T} \Bigl[ -\langle \partial_t \mathbf{u}, \partial_t \pmb{\Phi}\rangle +
\langle\Delta \mathbf{u}, \Delta \pmb{\Phi}\rangle
- 
\big(\vert \Delta \mathbf{u}\vert^2 - \vert \partial_t \mathbf{u}\vert^2\big)\langle \mathbf{u}, \pmb{\Phi} \rangle
+ |\nabla \mathbf{u}|^2\Delta \langle \mathbf{u},\pmb{\Phi}\rangle
- 2\langle \Delta \mathbf{u} , \nabla \mathbf{u}\rangle \nabla \langle \mathbf{u},\pmb{\Phi}\rangle
\Bigr]
\, {\rm d}{\bf x}\,{\rm d}{t} = 0\,,
\end{equation*}
for all $\pmb{\Phi} \in C^{\infty}_0 \bigl( (0,T); W^{2,2}(\Omega, \R^3)\bigr)$ is a weak solution to \eqref{biwave}.

\section{Numerical approximation}\label{sec_num}
Let $\mathbf{U}^{0}$, $\mathbf{V}^{0}$, s.t. $\langle \mathbf{U}^{0}({\bf z}), \mathbf{V}^{0}({\bf z})\rangle = 0$ and $|{\bf U}({\bf z})| = 1$ for ${\bf z}\in\mathcal{N}_h$,
be given and set $\mathbf{U}^{-1} = \mathbf{U}^{0} - \tau \mathbf{V}^0$.
The numerical approximation of (\ref{biwave}) is then defined as follows: for $n=0,\dots, N-1$ determine $\mathbf{U}^{n+1}, \mathbf{W}^{n+1}\in {\bf V}_h$
such that for all $\pmb{\Phi}, \pmb{\Psi} \in {\bf V}_h$ there holds
\begin{align}\label{scheme}
\bigl(d_t^2 \mathbf{U}^{n+1}, \pmb{\Phi} \bigr)_h + (\nabla \mathbf{W}^{n+1/2}, \nabla
\pmb{\Phi}) & = \bigl( \lambda_w^{n+1} \mathbf{U}^{n+1/2},  \pmb{\Phi}\bigr)_h \,, 
\\ \nonumber
(\mathbf{W}^{n+1/2}, \pmb{\Psi})_h & = (\nabla \mathbf{U}^{n+1/2}, \nabla\pmb{\Psi}) \,,
\end{align}
where the discrete Lagrange multiplier $\lambda_w^{n+1} \in V_h$ is defined for $\bz \in \N$ as
\begin{eqnarray}
\label{lambdaw}
&& \lambda_w^{n+1}(\bz) =
\left\{ \begin{array}{ll}
0 & \mbox{if } \mathbf{U}^{n+1/2}(\bz) = {\bf 0}\,,
\\
\frac{\frac{1}{2}{\langle \mathbf{V}^0(\bz), \mathbf{V}^1(\bz)\rangle}}{\vert \mathbf{U}^{1/2}(\bz)\vert^2}
+ \frac{\bigl( \nabla \mathbf{W}^{1/2}, \mathbf{U}^{1/2}(\bz) \otimes \nabla \varphi_{\bz}\bigr)}{\beta_{\bz}  \vert \mathbf{U}^{1/2}(\bz)\vert^2}
& \text{if } n=0,\, \mathbf{U}^{1/2}(\bz) \neq {\bf 0},
\\
-\frac{\bigl\langle d_t \mathbf{U}^n(\bz), {d_t}\mathbf{U}^{n+1/2}(\bz)\bigr\rangle}{\vert \mathbf{U}^{n+1/2}(\bz)\vert^2}
+ \frac{\bigl( \nabla \mathbf{W}^{n+1/2}, \mathbf{U}^{n+1/2}(\bz) \otimes \nabla \varphi_{\bz}
\bigr)}{\beta_{\bz}  \vert \mathbf{U}^{n+1/2}(\bz)\vert^2}
& \mbox{else}\, .
\end{array}  \right.
\end{eqnarray}
The above explicit formula for the discrete Lagrange multiplier $\lambda_w^{n+1}$ guarantees that $\vert {\bf U}^{n+1}(\bz)\vert^2 = \vert {\bf U}^n(\bz)\vert^2$, see Lemma~\ref{lem_est1} below.
The formula for $n>1$ can be deduced by setting $\pmb{\Phi} = \mathbf{U}^{n+1/2}(\bz)\varphi_{\bz}$ in (\ref{scheme})
from the identity
\begin{align*}
& \frac{1}{\tau} \Bigl\langle d_t {\bf U}^{n+1}(\bz) - d_t {\bf U}^{n}(\bz), 
{\bf U}^{n+1/2}(\bz)\Bigr\rangle 
\\
&\qquad = \frac{1}{2\tau^2} \Bigl[ \vert {\bf U}^{n+1}(\bz)\vert^2 - \vert {\bf U}^n(\bz)\vert^2\Bigr]
- \frac{1}{\tau} \Bigl\langle d_t {\bf U}^n(\bz), {\bf U}^{n-1/2}(\bz) 
+ \tau d_t {\bf U}^{n+1/2}(\bz)\Bigr\rangle
\\
&\qquad  =  -\frac{1}{2\tau^2} \Bigl[ \vert {\bf U}^{n}(\bz)\vert^2 - \vert {\bf U}^{n-1}(\bz)\vert^2\Bigr]
- \Bigl\langle d_t {\bf U}^{n}(\bz), d_t {\bf U}^{n+1/2}(\bz)\Bigr\rangle 
\\
&\qquad   =  - \Bigl\langle d_t {\bf U}^{n}(\bz), d_t {\bf U}^{n+1/2}(\bz)\Bigr\rangle\, ,
\end{align*}
where we used that $\vert {\bf U}^{n+1}(\bz)\vert^2 = \vert {\bf U}^n(\bz)\vert^2 = \vert {\bf U}^{n-1}(\bz)\vert^2 = 1$.
The case $n=0$ is considered separately since $\vert {\bf U}^{-1}(\bz)\vert \neq 1$ if $|{\bf V}_0(\bz)| > 0$ and the formula can be derived analogously
using the orthogonality $\langle \mathbf{U}^{0}, \mathbf{V}^{0}\rangle = 0$.

For $n\geq 1$ we denote $\mathbf{V}^{n+1}:= d_t \mathbf{U}^{n+1}$ and for $n=0,\dots, N$ we define the discrete energy as
$$
\mathcal{E}_h \bigl({\bf V}^{n+1}, {\bf U}^{n+1}\bigr) := \frac{1}{2}\|\mathbf{V}^{n+1}\|_h^2 + \frac{1}{2} \|\Delta_h \mathbf{U}^{n+1}\|_h^2\,.
$$

%%%%%%%%%%%%%%%%%%%%%%%%%%%
%%%%%%%%%%%%%%%%%%%%%%%%%%%
%%%%%%%%%%%%%%%%%%%%%%%%%%%
%%%%%%%%%%%%%%%%%%%%%%%%%%%
\begin{lemma}\label{lem_est1}
Let $\T_h$ be a quasiuniform triangulation of $\Omega \subset \R^d$, and
$(\, {\bf U}^0, {\bf V}^0\, ) \in {\bf V}_h \times {\bf V}_h$ such that
$\vert {\bf U}^0(\bz) \vert = 1$, and
$\bigl\langle {\bf U}^0(\bz), {\bf V}^{0}(\bz)\bigr\rangle = 0$ for all $\bz \in \N$.
For $n \geq 0$, and sufficiently
small $\tilde{C} \equiv \tilde{C}(\Omega, \T_h) > 0$ independent of $k,h > 0$ such that
$\tau \leq \tilde{C} h^{2}$, there exist ${\bf U}^{n+1},\, {\bf W}^{n+1} \in {\bf V}_h$, which
solve (\ref{scheme}).
Furthermore $\vert {\bf U}^{n+1}(\bz)
\vert = 1$ for all $\bz \in \N$, and
\begin{equation}\label{discener}
 \max_{n=1,\dots,N} \mathcal{E}_h \bigl({\bf V}^{n}, {\bf U}^{n}\bigr) +
\frac{\tau^2}{2} \sum_{n=1}^N \Vert d_t {\bf V}^{n}\Vert^2_h
%{\color{red} + \frac{\tau^2}{2} \sum_{n=1}^N \Vert  \Delta_h {\bf V}^{n}\Vert^2_h}
=
 \mathcal{E}_h \bigl( {\bf V}^0, {\bf U}^0\bigr).
% C \Bigl( 1+ \mathcal{E}_h \bigl( {\bf V}^0, {\bf U}^0\bigr) \Bigr),
\end{equation}
%where $C > 0$ depends on $T> 0$.
\end{lemma}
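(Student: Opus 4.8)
The plan is to treat first the two \emph{a priori} structural properties — the nodal constraint $|\mathbf{U}^{n}(\bz)|=1$ and the energy identity \eqref{discener} — which hold for \emph{any} solution of \eqref{scheme}, and only then the existence of such a solution, which I expect to be the hard part. A reduction used throughout: since $(\cdot,\cdot)_h$ is a genuine inner product on ${\bf V}_h$, the second line of \eqref{scheme} forces $\mathbf{W}^{n+1/2}=-\Delta_h\mathbf{U}^{n+1/2}$, so $\mathbf{W}$ can be eliminated and \eqref{scheme} reduces to a single nonlinear equation for $\mathbf{U}^{n+1}\in{\bf V}_h$, namely $(d_t^2\mathbf{U}^{n+1},\pmb{\Phi})_h+(\Delta_h\mathbf{U}^{n+1/2},\Delta_h\pmb{\Phi})_h=(\lambda_w^{n+1}\mathbf{U}^{n+1/2},\pmb{\Phi})_h$ for all $\pmb{\Phi}\in{\bf V}_h$, with $\mathbf{W}^{n+1/2}=-\Delta_h\mathbf{U}^{n+1/2}$ substituted in \eqref{lambdaw}.

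\emph{Nodal constraint.} I argue by induction on $n$. Given a solution of \eqref{scheme} at step $n$ and a node $\bz$ with $\mathbf{U}^{n+1/2}(\bz)\neq\mathbf{0}$ (if $\mathbf{U}^{n+1/2}(\bz)=\mathbf{0}$ then $\mathbf{U}^{n+1}(\bz)=-\mathbf{U}^{n}(\bz)$ and the claim is trivial), test the first equation with the $\mathbb{R}^3$-valued function $\pmb{\Phi}=\varphi_\bz\,\mathbf{U}^{n+1/2}(\bz)$; by $(\mathbf{v},\varphi_\bz\mathbf{a})_h=\beta_\bz\langle\mathbf{v}(\bz),\mathbf{a}\rangle$ and $\nabla(\varphi_\bz\mathbf{a})=\mathbf{a}\otimes\nabla\varphi_\bz$ this isolates exactly the combination that \eqref{lambdaw} was designed to cancel, and after inserting \eqref{lambdaw} one is left (for $n\ge1$) with $\langle d_t^2\mathbf{U}^{n+1}(\bz),\mathbf{U}^{n+1/2}(\bz)\rangle+\langle d_t\mathbf{U}^{n}(\bz),d_t\mathbf{U}^{n+1/2}(\bz)\rangle=0$. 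Reading the elementary computation displayed just before the statement in the forward direction — it uses only $\langle d_t\mathbf{U}^{k+1}(\bz),\mathbf{U}^{k+1/2}(\bz)\rangle=\tfrac{1}{2\tau}(|\mathbf{U}^{k+1}(\bz)|^2-|\mathbf{U}^{k}(\bz)|^2)$, $\mathbf{U}^{n+1/2}(\bz)=\mathbf{U}^{n-1/2}(\bz)+\tau\,d_t\mathbf{U}^{n+1/2}(\bz)$, and the induction hypothesis $|\mathbf{U}^{n}(\bz)|=|\mathbf{U}^{n-1}(\bz)|=1$ — the last relation is equivalent to $|\mathbf{U}^{n+1}(\bz)|^2-|\mathbf{U}^{n}(\bz)|^2=|\mathbf{U}^{n}(\bz)|^2-|\mathbf{U}^{n-1}(\bz)|^2$, so $|\mathbf{U}^{n+1}(\bz)|=1$ and the induction closes. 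The base step $n=0$ is genuinely different, because $|\mathbf{U}^{-1}(\bz)|\neq1$ when $\mathbf{V}^{0}(\bz)\neq\mathbf{0}$; there one uses the orthogonality $\langle\mathbf{U}^{0}(\bz),\mathbf{V}^{0}(\bz)\rangle=0$ (hence the separate $n=0$ branch of \eqref{lambdaw}) to reach $|\mathbf{U}^{1}(\bz)|=|\mathbf{U}^{0}(\bz)|=1$.

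\emph{Energy identity.} Test the first equation of \eqref{scheme} with $\pmb{\Phi}=\mathbf{U}^{n+1}-\mathbf{U}^{n}=\tau\mathbf{V}^{n+1}$. The right-hand side vanishes because, by the constraint just proved, $\langle\mathbf{U}^{n+1/2}(\bz),\mathbf{U}^{n+1}(\bz)-\mathbf{U}^{n}(\bz)\rangle=\tfrac12(|\mathbf{U}^{n+1}(\bz)|^2-|\mathbf{U}^{n}(\bz)|^2)=0$ at every node. With $\mathbf{W}^{n+1/2}=-\Delta_h\mathbf{U}^{n+1/2}$ and \eqref{disc_lap} used twice the stiffness term is $(\Delta_h\mathbf{U}^{n+1/2},\Delta_h(\mathbf{U}^{n+1}-\mathbf{U}^{n}))_h=\tfrac12\|\Delta_h\mathbf{U}^{n+1}\|_h^2-\tfrac12\|\Delta_h\mathbf{U}^{n}\|_h^2$, and writing $d_t^2\mathbf{U}^{n+1}=d_t\mathbf{V}^{n+1}$ and using $\langle\mathbf{a}-\mathbf{b},\mathbf{a}\rangle=\tfrac12|\mathbf{a}|^2-\tfrac12|\mathbf{b}|^2+\tfrac12|\mathbf{a}-\mathbf{b}|^2$ the inertial term is $(\mathbf{V}^{n+1}-\mathbf{V}^{n},\mathbf{V}^{n+1})_h=\tfrac12\|\mathbf{V}^{n+1}\|_h^2-\tfrac12\|\mathbf{V}^{n}\|_h^2+\tfrac{\tau^2}{2}\|d_t\mathbf{V}^{n+1}\|_h^2$ (the last, mildly dissipative, term is the one appearing in \eqref{discener} and comes from pairing $d_t^2$ with a midpoint stiffness). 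Summing the two, $\mathcal{E}_h(\mathbf{V}^{n+1},\mathbf{U}^{n+1})-\mathcal{E}_h(\mathbf{V}^{n},\mathbf{U}^{n})+\tfrac{\tau^2}{2}\|d_t\mathbf{V}^{n+1}\|_h^2=0$ for $n=0,\dots,N-1$ (with $\mathbf{V}^{0}=d_t\mathbf{U}^{0}$; the $n=0$ step uses the $n=0$ constraint), and summing over $n$ yields \eqref{discener}, indeed the per-step identity $\mathcal{E}_h(\mathbf{V}^{m},\mathbf{U}^{m})+\tfrac{\tau^2}{2}\sum_{n=1}^{m}\|d_t\mathbf{V}^{n}\|_h^2=\mathcal{E}_h(\mathbf{V}^{0},\mathbf{U}^{0})$ for $1\le m\le N$.

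\emph{Existence.} This is where the restriction $\tau\le\tilde{C}h^{2}$ enters, and I expect it to be the main obstacle. I would obtain a solution of the reduced equation for $\mathbf{U}^{n+1}$ by a Brouwer fixed-point argument on a small closed ball of ${\bf V}_h$ centred at $\mathbf{U}^{n}$. The left-hand operator $\tau^{-2}\,\mathrm{Id}+\tfrac12\Delta_h^{2}$ is symmetric positive definite on $({\bf V}_h,(\cdot,\cdot)_h)$, hence invertible, and its inverse is used to define the fixed-point map $\mathbf{Z}\mapsto\mathbf{U}^{\ast}$ obtained by freezing the rational Lagrange term at $\mathbf{Z}$. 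Two points must be verified, both relying on $\tau\le\tilde{C}h^{2}$: (i) on a ball of radius $O(\tau)$ around $\mathbf{U}^{n}$ the nodal denominators $|\tfrac12(\mathbf{Z}(\bz)+\mathbf{U}^{n}(\bz))|$ stay bounded away from $0$ — because $|\mathbf{U}^{n}(\bz)|=1$ from the previous induction step and $\mathbf{Z}$ is close to $\mathbf{U}^{n}$ — so the map is well defined and continuous; (ii) bounding the $\big(\nabla\mathbf{W}^{n+1/2},\mathbf{U}^{n+1/2}(\bz)\otimes\nabla\varphi_\bz\big)$ contribution to $\lambda_w^{n+1}$ costs powers of $h^{-1}$ through the inverse estimates — the standard one on ${\bf V}_h$ and \eqref{inv_disclap} for $\Delta_h$ — which are absorbed by the $\tau^{-2}$ coming from $d_t^2$, so the map sends the ball into itself. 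Brouwer then gives a fixed point, i.e.\ a solution $(\mathbf{U}^{n+1},\mathbf{W}^{n+1})$ of \eqref{scheme}, to which the two structural identities above apply.
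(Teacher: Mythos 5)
Your first two parts are correct and essentially the paper's argument: the nodal constraint is obtained exactly as in the paper by testing with $\pmb{\Phi}=\varphi_{\bz}\,\mathbf{U}^{n+1/2}(\bz)$ so that the gradient contributions cancel against the second term of \eqref{lambdaw}, and running the displayed identity forwards by induction (with the orthogonality $\langle\mathbf{U}^0,\mathbf{V}^0\rangle=0$ at $n=0$); the energy identity is the paper's computation with $\pmb{\Phi}$ proportional to $d_t\mathbf{U}^{n+1}$, the multiplier term killed by the constraint, and $\mathbf{W}^{n+1/2}=-\Delta_h\mathbf{U}^{n+1/2}$ together with \eqref{disc_lap} used to turn the stiffness term into $\tfrac12\|\Delta_h\mathbf{U}^{n+1}\|_h^2-\tfrac12\|\Delta_h\mathbf{U}^{n}\|_h^2$ (the paper does this by also testing the second equation with $\pmb{\Psi}=d_t\Delta_h\mathbf{U}^{n+1}$ and cancelling, which is the same computation). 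Your per-step identity and summation reproduce \eqref{discener}.

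The genuine gap is the existence part, which is the only place the hypothesis $\tau\le\tilde C h^2$ is actually used and which the paper proves in a separate lemma (Lemma~\ref{lem_exist}) with real work. Your plan — freeze the rational Lagrange term at $\mathbf{Z}$, invert $\tau^{-2}\mathrm{Id}+\tfrac12\Delta_h^2$, and apply Brouwer on a ball of radius $O(\tau)$ around $\mathbf{U}^n$ — leaves its two critical steps as assertions. Point (ii), the self-mapping bound ``inverse estimates are absorbed by $\tau^{-2}$'', is precisely where the quantitative content lies: on your ball you only control $\|\mathbf{Z}-\mathbf{U}^n\|_h\le C\tau$, so $\|\Delta_h\mathbf{Z}\|_h$ is only bounded via \eqref{inv_disclap} by $C(1+\tau h^{-2})$, the frozen $\lambda_2$-type term is of size $O(h^{-2})$ in $\|\cdot\|_h$, and the $\lambda_1$-type term requires nodal ($L^\infty$) control of $d_t$ of the candidate; checking that the resulting increments are $\le C\tau$ and that the nodal denominators stay above $\tfrac12$ is exactly the borderline computation where $\tau\le\tilde Ch^2$ and the a priori bounds $\|\mathbf{V}^n\|_h,\|\Delta_h\mathbf{U}^n\|_h\le C$ (i.e.\ an induction that couples existence with the already-established energy law) must be invoked — none of which you carry out, and without which the ball invariance is not established. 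The paper avoids this circularity differently: it regularizes the denominators by $|\cdot|_\varepsilon^2=\max\{|\cdot|^2,\varepsilon\}$, proves coercivity $(\mathbf{F}_\varepsilon(\mathbf{u}),\mathbf{u})>0$ on a large sphere (estimates \reff{exest1}--\reff{exest2}) to get a solution from the coercivity corollary of Brouwer with no smallness of the increment needed, and then shows a posteriori, by induction on the discrete energy estimate and the inverse bound $\|v_h\|_{L^\infty}\le h^{-d/2}\|v_h\|$, that $|\mathbf{U}^{n+1/2}(\bz)|>\tfrac12$ (inequality \reff{dt_small}) so the regularization is inactive and the original scheme \eqref{scheme} is solved. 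Either route can work, but as written your existence argument is a plan whose decisive estimates are missing.
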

\begin{proof}
i) We note that the second equation in (\ref{scheme}) and (\ref{disc_lap}) imply that ${\bf W}^{n+1} = -\Delta_h {\bf U}^{n+1} \in {\bf V}_h$, $n\geq 0$;
the existence of ${\bf U}^{n+1}\in {\bf V}_h$ is shown in Lemma~\ref{lem_exist} below.

ii) The property $\vert {\bf U}^{n+1}(\bz)\vert^2 = \vert {\bf U}^{n}(\bz)\vert^2 = 1$ for all $\bz \in \N$ follows by
choosing $\pmb{\Phi} = \mathbf{U}^{n+1/2}(\bz) \varphi_z $ in (\ref{scheme})$_1$ and
noting the definition of the discrete Lagrange multiplier (\ref{lambdaw}).

iii) Energy law: we choose $\pmb{\Phi} = d_t \mathbf{U}^{n+1}$, $\pmb{\Psi} = d_t \Delta_h \mathbf{U}^{n+1}$ in (\ref{scheme}).
Noting that
$$
\bigl( \lambda_w^{n+1} \mathbf{U}^{n+1/2},  d_t\mathbf{U}^{n+1}\bigr)_h = \bigl( \lambda_w^{n+1}, |\mathbf{U}^{n+1}|^2-|\mathbf{U}^{n}|^2\bigr)_h =0,
$$
(since  $\langle \mathbf{U}^{n+1/2},  d_t\mathbf{U}^{n+1}\rangle(\mathbf{z}) = (|\mathbf{U}^{n+1}|^2-|\mathbf{U}^{n}|^2)(\mathbf{z})=0$ by part i))
we deduce that
\begin{align}\label{en1}
\bigl(d_t \mathbf{V}^{n+1}, \mathbf{V}^{n+1} \bigr)_h + (\nabla \mathbf{W}^{n+1/2}, \nabla d_t \mathbf{U}^{n+1}) & = 0 \,,
\\ \nonumber
(\mathbf{W}^{n+1/2}, d_t \Delta_h \mathbf{U}^{n+1})_h & = (\nabla \mathbf{U}^{n+1/2}, \nabla d_t \Delta_h \mathbf{U}^{n+1}) \,.
\end{align}
From the definition of $\Delta_h$ it follows that
$(\nabla \mathbf{U}^{n+1/2}, \nabla d_t \Delta_h \mathbf{U}^{n+1}) = -(\Delta_h \mathbf{U}^{n+1/2}, d_t \Delta_h \mathbf{U}^{n+1})_h$
and $(\mathbf{W}^{n+1/2}, d_t \Delta_h \mathbf{U}^{n+1})_h =  -(\nabla\mathbf{W}^{n+1/2}, \nabla d_t  \mathbf{U}^{n+1})$.
Hence, we add the respective equations in (\ref{en1}) to cancel the mixed terms and obtain after using the identitis $2(a-b)a = a^2 + (a-b)^2 - b^2$, $(a+b)(a-b)=a^2 - b^2$ that
$$
\frac{1}{2}\|\mathbf{V}^{n+1}\|_h^2 +\frac{1}{2}\|\mathbf{V}^{n+1}-\mathbf{V}^{n}\|_h^2 -\frac{1}{2}\|\mathbf{V}^{n}\|_h^2 +
\frac{1}{2}\|\Delta_h \mathbf{U}^{n+1}\|_h^2  -\frac{1}{2}\|\Delta_h \mathbf{U}^{n}\|_h^2
 = 0 \,.
$$
The discrete energy law (\ref{discener}) then follows directly after summation
of the above identity for $n=0,\dots, N-1$.

\end{proof}

%%%%%%%%%%%%%%%%%%%%%%%%%%%
%%%%%%%%%%%%%%%%%%%%%%%%%%%
%%%%%%%%%%%%%%%%%%%%%%%%%%%
%%%%%%%%%%%%%%%%%%%%%%%%%%%
The solvability of the nonlinear system given by (\ref{scheme}) is guaranteed by the following lemma.
\begin{lemma}\label{lem_exist}
Given $\mathbf{U}^{n}$, $\mathbf{U}^{n-1}\in \mathbf{V}_h$, for sufficiently small $\tau \leq C h^2$, there exist $\mathbf{U}^{n+1}$, $\mathbf{W}^{n+1}\in \mathbf{V}_h$ that satisfy (\ref{scheme}).
\end{lemma}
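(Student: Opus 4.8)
The plan is to solve the nonlinear system \eqref{scheme} by a fixed-point argument on the finite-dimensional space $\mathbf{V}_h$. First I would eliminate $\mathbf{W}^{n+1}$: the second equation of \eqref{scheme} together with \eqref{disc_lap} gives $\mathbf{W}^{n+1}=-\Delta_h\mathbf{U}^{n+1}$, so the system reduces to a single nonlinear equation for $\mathbf{U}^{n+1}\in\mathbf{V}_h$, namely
\begin{equation*}
\bigl(d_t^2\mathbf{U}^{n+1},\pmb{\Phi}\bigr)_h+\bigl(\nabla(-\Delta_h\mathbf{U}^{n+1/2}),\nabla\pmb{\Phi}\bigr)=\bigl(\lambda_w^{n+1}\mathbf{U}^{n+1/2},\pmb{\Phi}\bigr)_h\qquad\forall\,\pmb{\Phi}\in\mathbf{V}_h,
\end{equation*}
where $\lambda_w^{n+1}$ is the explicit expression \eqref{lambdaw} depending on $\mathbf{U}^{n+1}$ (through $\mathbf{U}^{n+1/2}$ and $\mathbf{W}^{n+1/2}$) and on the known data $\mathbf{U}^n$, $\mathbf{U}^{n-1}$. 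I would then define a map $F\colon\mathbf{V}_h\to\mathbf{V}_h$ whose fixed points are exactly the solutions: given a candidate $\mathbf{U}^{n+1}$, one evaluates $\lambda_w^{n+1}$ from the formula, treats the right-hand side as data, and solves the resulting linear, coercive problem (the left-hand side bilinear form $\pmb{\Phi}\mapsto(\mathbf{U}^{n+1},\pmb{\Phi})_h+\frac{\tau^2}{2}(\nabla\Delta_h\mathbf{U}^{n+1},\nabla\pmb{\Phi})$, after moving the explicit lower-order terms to the right, is symmetric positive definite on $\mathbf{V}_h$) for a new iterate.

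The key technical point is to show $F$ is well defined and continuous, and then to produce an invariant ball so Brouwer's fixed-point theorem applies. Well-definedness needs that $\lambda_w^{n+1}$ is finite: the denominators $|\mathbf{U}^{n+1/2}(\mathbf{z})|^2$ could vanish. To handle this one argues as in \eqref{lambdaw} itself — when $\mathbf{U}^{n+1/2}(\mathbf{z})=\mathbf{0}$ the multiplier is set to $0$ and the corresponding nodal equation is consistent — so $F$ extends continuously, or alternatively one restricts attention to the (open, nonempty) set where the iterate stays close to $\mathbf{U}^n$ at every node, on which $|\mathbf{U}^{n+1/2}(\mathbf{z})|$ is bounded below. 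For the a priori bound I would test the reduced equation with $\pmb{\Phi}=d_t\mathbf{U}^{n+1}$, exactly reproducing the computation in the proof of Lemma~\ref{lem_est1}: the multiplier term drops because $\langle\mathbf{U}^{n+1/2},d_t\mathbf{U}^{n+1}\rangle(\mathbf{z})=\tfrac{1}{2\tau}(|\mathbf{U}^{n+1}|^2-|\mathbf{U}^n|^2)(\mathbf{z})$, which is zero once nodal norms are preserved; combined with \eqref{normeq} and the inverse estimate \eqref{inv_disclap} this yields a bound on $\|\mathbf{U}^{n+1}\|$ in terms of the previous two iterates, uniformly in any fixed-point iterate, giving the required invariant ball.

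The main obstacle, and the reason the smallness condition $\tau\le Ch^2$ enters, is controlling the $\lambda_w^{n+1}$ term during the fixed-point argument, since it contains the factor $(\nabla\mathbf{W}^{n+1/2},\mathbf{U}^{n+1/2}(\mathbf{z})\otimes\nabla\varphi_{\mathbf{z}})/(\beta_{\mathbf{z}}|\mathbf{U}^{n+1/2}(\mathbf{z})|^2)$ with $\mathbf{W}^{n+1/2}=-\Delta_h\mathbf{U}^{n+1/2}$, i.e.\ it is one derivative ``too rough'' relative to the energy estimate. Bounding $\|\lambda_w^{n+1}\mathbf{U}^{n+1/2}\|_h$ requires the inverse estimates for $V_h$: $\|\nabla\Delta_h\mathbf{U}^{n+1/2}\|\le Ch^{-1}\|\Delta_h\mathbf{U}^{n+1/2}\|\le Ch^{-2}\|\nabla\mathbf{U}^{n+1/2}\|$ via \eqref{inv_disclap}, together with $\|\nabla\varphi_{\mathbf{z}}\|\sim h^{d/2-1}$ and $\beta_{\mathbf{z}}\sim h^d$, and the lower bound on the nodal values. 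The net effect is a contraction/self-map estimate with a prefactor $C\tau h^{-2}$ (respectively $C\tau^2h^{-4}$ for the quadratic-in-$\mathbf{U}^{n+1}$ terms), so choosing $\tau\le Ch^2$ with $C$ small makes the nonlinear perturbation a small, controlled correction of the coercive linear problem, and Brouwer (or a Banach contraction on the invariant ball) delivers $\mathbf{U}^{n+1}$; then $\mathbf{W}^{n+1}=-\Delta_h\mathbf{U}^{n+1}$ completes the proof.
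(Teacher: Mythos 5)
Your overall strategy (eliminate $\mathbf{W}^{n+1}=-\Delta_h\mathbf{U}^{n+1}$, reduce to a single nonlinear equation for $\mathbf{U}^{n+1}$ in the finite-dimensional space $\mathbf{V}_h$, apply Brouwer, and trace the restriction $\tau\le Ch^2$ to inverse-estimate control of the multiplier term) is the same as the paper's, but two steps of your argument have genuine gaps. First, your a priori bound is circular: you test with $\pmb{\Phi}=d_t\mathbf{U}^{n+1}$ and drop the multiplier term because $\langle\mathbf{U}^{n+1/2},d_t\mathbf{U}^{n+1}\rangle(\bz)=\tfrac{1}{2\tau}(|\mathbf{U}^{n+1}|^2-|\mathbf{U}^n|^2)(\bz)$ vanishes ``once nodal norms are preserved.'' But nodal norm preservation is a property of an exact solution of \eqref{scheme} (it is part ii) of Lemma~\ref{lem_est1}, proved \emph{after} existence); it does not hold for an arbitrary iterate of your solution-operator map $F$, where the multiplier is frozen at the old iterate and the new iterate solves a linear problem. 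So the cancellation cannot be used to build the invariant ball. The paper avoids this entirely: it works with the residual map $\mathbf{F}_\varepsilon$ and proves coercivity $(\mathbf{F}_\varepsilon(\mathbf{u}),\mathbf{u})>0$ on a sphere $\|\mathbf{u}\|_h=R$ by estimating the multiplier contributions directly (estimates \eqref{exest1}, \eqref{exest2}, giving the terms $\tfrac{1}{\tau^2}\|\mathbf{u}\|_h^2$ and $\tfrac{C}{h^4}\|\mathbf{u}\|_h^2$ that are absorbed when $\tau\le\tilde Ch^2$), never invoking the sphere constraint; Brouwer is then applied in the ``zero of a vector field'' form, which also spares you the self-map verification your iteration map would require.

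Second, your treatment of the degenerate denominators is not correct as stated: defining $\lambda_w^{n+1}(\bz)=0$ when $\mathbf{U}^{n+1/2}(\bz)=\mathbf{0}$ does \emph{not} make the map extend continuously, since the quotient blows up as $|\mathbf{U}^{n+1/2}(\bz)|\to 0$ while the numerator need not vanish. The paper's fix is to regularize the denominator, $|\cdot|_\varepsilon^2:=\max\{|\cdot|^2,\varepsilon\}$ with $\tfrac18<\varepsilon\le\tfrac14$, obtain a solution of the regularized problem by Brouwer, and then, in a separate induction step, prove that the regularization is inactive at the solution: using the discrete energy inequality for the regularized scheme, the inverse estimate $\|v_h\|_{L^\infty}\le Ch^{-d/2}\|v_h\|$ and $\tau\le Ch^2$, one gets $\tfrac{\tau}{2}|d_t\mathbf{U}^{n+1}(\bz)|<\tfrac12$, hence $|\mathbf{U}^{n+1/2}(\bz)|>\tfrac12$ at every node, so the regularized solution solves \eqref{scheme}. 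Your alternative suggestion (restrict to a set where the iterate stays nodally close to $\mathbf{U}^n$) points in this direction, but you would still have to prove that your map leaves such a convex compact set invariant, which is precisely the quantitative $L^\infty$-in-time-increment estimate you have not supplied; without either the $\varepsilon$-regularization or that estimate, the continuity hypothesis of Brouwer is not verified and the proof does not close.
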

\begin{proof}
$i)$
From (\ref{scheme})$_2$ and (\ref{disc_lap}) we observe that $\mathbf{W}^{n} = -\Delta_h \mathbf{U}^{n}$, $\mathbf{W}^{n+1} = -\Delta_h \mathbf{U}^{n+1}$. Hence, it is enough to show the existence of
$\mathbf{U}^{n+1}$.

For {$\frac{1}{8} < \varepsilon \leq \frac{1}{4}$} we define the continuous map $\mathbf{F}_\varepsilon: \mathbf{V}_h \rightarrow \mathbf{V}_h$ via
$$
(\mathbf{F}_\varepsilon(\mathbf{u}), \pmb{\Phi}) := \frac{2}{\tau^2}\bigl( \mathbf{u} -2 \mathbf{U}^{n} + \mathbf{U}^{n-1/2}, \pmb{\Phi} \bigr)_h 
 + \big(\nabla \mathbf{w}, \nabla \pmb{\Phi}\big) - \bigl( \lambda_{w, \varepsilon}^{n+1} \mathbf{u},  \pmb{\Phi}\bigr)_h,
$$
where $\mathbf{w} := -\Delta_h \mathbf{u}$ and
\begin{align*}
\lambda_{w,\varepsilon}^{n+1}(\bz) =
-\frac{\bigl\langle d_t \mathbf{U}^n(\bz), {k^{-1}\big(\mathbf{u}(\bz) - \mathbf{U}^{n-1/2}\big)}\bigr\rangle}{ \vert \mathbf{u}(\bz)\vert_\varepsilon^2}
+ \frac{\bigl( \nabla \mathbf{w}, \mathbf{u}(\bz) \otimes \nabla \varphi_{\bz} \bigr)}{\beta_{\bz}  \vert \mathbf{u}(\bz)\vert_\varepsilon^2}
:= \lambda_1^{n+1}(\mathbf{z}) + \lambda_2^{n+1}(\mathbf{z}) \qquad \bz \in \mathcal{N}_h.
\end{align*}
with $|\cdot|_\varepsilon^2 := \max\{\vert \cdot \vert^2,\varepsilon\}$.
%set $\pmb{\Phi} = \mathbf{u} {\red \varphi_{\bz}}$ and

We estimate the first term above as
\begin{align}\label{exest1}
\nonumber
\bigl( \lambda_1^{n+1} \mathbf{u},  \mathbf{u}  \bigr)_h &  = {\sum_{\bz\in \mathcal{N}_h}} \bigl( \lambda_1^{n+1} \mathbf{u},  \mathbf{u}(\bz) {\varphi_{\bz}} \bigr)_h
\leq  {\sum_{\bz\in \mathcal{N}_h}} \frac{|\mathbf{u}(\bz)|^2}{|\mathbf{u}(\bz)|_\varepsilon^2} \frac{1}{\tau}\|\mathbf{u} - \mathbf{U}^{n-1/2}\|_h\|d_t \mathbf{U}^n\|_h
\\
& \leq \frac{1}{{2} \tau^2}\|\mathbf{u} - \mathbf{U}^{n-1/2} \|_h^2 + C \|d_t \mathbf{U}^n\|_h^2
\leq \frac{1}{\tau^2}\|\mathbf{u}\|^2 + \frac{1}{\tau^2}\|\mathbf{U}^{n-1/2} \|_h^2 + C \|d_t \mathbf{U}^n\|_h^2.
\end{align}
Noting that $\| \varphi_{\bz}\|_{L^\infty} \leq Ch^{-1}$ we estimate
\begin{equation*}%\label{lambda2_est}
\begin{split}
\bigl( \lambda_2^{n+1} \mathbf{u},  \mathbf{u}(\bz) \varphi_{\bz} \bigr)_h
& \leq \frac{|\mathbf{u}(\bz)|^2}{|\mathbf{u}(\bz)|_\varepsilon^2} \bigl( \nabla \mathbf{w}, \mathbf{u}(\bz) \otimes \nabla \varphi_{\bz} \bigr)_{\omega_{\bz}} 
 \leq C \|\nabla \mathbf{w}\|_{\omega_{\bz}} \|\mathbf{u}(\bz) \nabla \varphi_{\bz} \|_{\omega_{\bz}}
\\
& \leq \frac{C}{h} \|\nabla \mathbf{w}\|_{\omega_\bz}  \|\mathbf{u}(\bz) \|_{\omega_\bz}.
\end{split}
\end{equation*}
%{\red where we used the inverse inequalities $\|\nabla \varphi_\bz\|\leq C h^{-1}$, $\|\nabla \mathbf{w}\|\leq C h^{-1}\|\mathbf{w}\|$.}
Consequently, we estimate the second term using the above estimate, the inverse inequality $\|\nabla \mathbf{v}\|\leq C h^{-1}\|\mathbf{v}\|$ for $\mathbf{v} \in \mathbf{V}_h$ and (\ref{normeq}) as
\begin{equation}\label{exest2}
\begin{split}
\bigl( \lambda_2^{n+1} \mathbf{u},  \mathbf{u} \bigr)_h & = {\sum_{\bz\in \mathcal{N}_h}}  \bigl( \lambda_2^{n+1} \mathbf{u},  \mathbf{u}(\bz) \varphi_{\bz} \bigr)_h
 \leq \frac{C}{h} \sum_{\bz\in \mathcal{N}_h} \|\nabla \mathbf{w}\|_{\omega_\bz} \| \mathbf{u}(\bz)  \|_{\omega_\bz}
\\
& \leq
\frac{C}{h} \|\nabla \mathbf{w}\| \|\mathbf{u} \|_h \leq \frac{C}{h^2} \|\mathbf{w}\| \|\mathbf{u} \|_h
\leq \frac{1}{2} \|\Delta_h \mathbf{u}\|_{h}^2 + \frac{C}{h^4} \|\mathbf{u}\|_{h}^2,
\end{split}
\end{equation}
where we also used the fact that $\sum_{\bz\in \mathcal{N}_h} \|\mathbf{v}\|_{\omega_\bz} ^2 \leq  C(\mathcal{T}_h) \|\mathbf{v}\|^2$
(where the constant $C(\mathcal{T}_h)> 0$ is $h$-independent due to the regularity of the partition $\mathcal{T}_h$).

We use (\ref{exest1}), (\ref{exest2}) along with (\ref{disc_lap}) to conclude
\begin{align*}
 (\mathbf{F}_\varepsilon(\mathbf{u}), \mathbf{u})
 & \geq \frac{2}{\tau^2}\left( \|\mathbf{u} \|_h^{2} - 2|\bigl(\mathbf{U}^{n}, \mathbf{u}\bigr)_h| - |\bigl(\mathbf{U}^{n-1/2}, \mathbf{u} \bigr)_h|\right) + \|\Delta_h \mathbf{u}\|_h^2
\\
&  \quad - \frac{1}{\tau^2}\|\mathbf{u}\|_h^2  - \frac{1}{\tau^2} \|\mathbf{U}^{n-1/2}\|_h^2 - C \|d_t \mathbf{U}^n\|_h^2
- \frac{1}{2} \|\Delta_h \mathbf{u}\|_{h}^2 - \frac{C}{h^4} \|\mathbf{u} \|_{h}^2
\\
& \geq \frac{1}{\tau^2}\left( \left(1- \frac{C\tau^2}{ h^4}\right) \|\mathbf{u} \|_h^{2} - {4} \|\mathbf{U}^{n}\|_h\|\mathbf{u} \|_h - 2\|\mathbf{U}^{n-1/2}\|_h\|\mathbf{u} \|_h\right)
\\
& \quad + \frac{1}{2}\|\Delta_h \mathbf{u}\|_h^2 - \frac{1}{2\tau^2}\|\mathbf{U}^{n-1/2}\|_h^2 - C \|d_t \mathbf{U}^n\|_h^2
\\
& \geq \frac{1}{\tau^2}\|\mathbf{u}\|_h\left( \left(1- \frac{C\tau^2}{ h^4}\right) \|\mathbf{u} \|_h - 4\|\mathbf{U}^{n}\|_h - 2\|\mathbf{U}^{n-1/2}\|_h\right)
\\
& \quad + \frac{1}{2}\|\Delta_h \mathbf{u}\|_h^2 - \frac{1}{2\tau^2}\|\mathbf{U}^{n-1/2}\|_h^2 - C \|d_t \mathbf{U}^n\|_h^2.
\end{align*}
For $\tau \leq \tilde{C} h^2$ for sufficiently small $\tilde{C} \equiv \tilde{C}(\Omega)>0$ it is possible to find $R>0$ such that for all $\mathbf{u} \in\mathbf{V}_h$ with $\|\mathbf{u}\|_h \geq R$
it holds
$$
(\mathbf{F}_\varepsilon(\mathbf{u}), \mathbf{u}) \geq \frac{1}{\tau^2}\|\mathbf{u}\|_h - \frac{1}{2\tau^2}\|\mathbf{U}^{n-1/2}\|_h^2 - C \|d_t \mathbf{U}^n\|_h^2 > 0.
$$
Then the Brouwer's fixed point theorem implies the existence of $\mathbf{u} \equiv \mathbf{U}^{n+1/2}\in \mathbf{V}_h$ such that $\mathbf{F}_\varepsilon(\mathbf{U}^{n+1/2}) = 0$.

$ii)$ We show that for sufficiently small $\tau \leq C h^2$ the solution $\mathbf{U}^{n+1/2}$ from step $i)$ also satisfies $\mathbf{F}_\varepsilon(\mathbf{U}^{n+1/2}) = 0$ for $\varepsilon=0$
which implies that $\mathbf{U}^{n+1} = 2\mathbf{U}^{n+1/2} - \mathbf{U}^n$  is a solution of (\ref{scheme}).
It suffices to show that for $\tau \leq C h^2$ it holds
\begin{equation}\label{dt_small}
|\mathbf{U}^{n+1/2}(\bz)| \geq 1- \frac{\tau}{2}|\mathrm{d}_t\mathbf{U}^{n+1}(\bz)| > \frac{1}{2}\qquad \forall \bz \in \mathcal{N}_h.
\end{equation}
We proceed by induction.
Suppose that for $0\leq  j \leq n$ its holds that $\mathbf{U}^j \in \mathbf{V}_h$ satisfies $|\mathbf{U}^j(\bz)| = 1$ for all $\bz \in \mathcal{N}_h$
and that
$$
\mathcal{E}_h \bigl({\bf V}^{n}, {\bf U}^{n}\bigr) + 
\frac{\tau^2}{2} \sum_{j=1}^n \Vert d_t {\bf V}^{j}\Vert^2_h
%+ \frac{\tau^2}{2} \sum_{j=1}^n \Vert \Delta_h {\bf V}^{j}\Vert^2_h
=
 \mathcal{E}_h \bigl( {\bf V}^0, {\bf U}^0\bigr).
$$
The solution $\mathbf{U}^{n+1}$ satisfies
$$
\bigl(d_t^2 \mathbf{U}^{n+1}, \pmb{\Phi} \bigr)_h 
 + \big(\Delta_h \mathbf{U}^{n+1/2}, \Delta_h \pmb{\Phi}\big)_h = \bigl( \lambda_{w, \varepsilon}^{n+1} \mathbf{U}^{n+1/2},  \pmb{\Phi}\bigr)_h
= \bigl( (\lambda_{1}^{n+1} + \lambda_{2}^{n+1})\mathbf{U}^{n+1/2},  \pmb{\Phi}\bigr)_h.
$$
We set $\pmb{\Phi} = d_t \mathbf{U}^{n+1}$ in the above and get
$$
\frac{1}{2}d_t\left[ \|d_t \mathbf{U}^{n+1}\|_h^2 +   \|\Delta_h \mathbf{U}^{n+1}\|_h^2 \right] + \frac{\tau}{2} \|d_t^2 \mathbf{U}^{n+1}\|_h^2
= \bigl( (\lambda_{1}^{n+1} + \lambda_{2}^{n+1})\mathbf{U}^{n+1/2}, d_t \mathbf{U}^{n+1} \bigr)_h.
$$
Noting that $\tau d_t \mathbf{U}^{n+1} = 2\mathbf{U}^{n+1/2}- 2 \mathbf{U}^n$ we estimate
\begin{align*}
&(\lambda_{1}^{n+1} \mathbf{U}^{n+1/2},  d_t \mathbf{U}^{n+1})_h 
\leq 
\frac{1}{2}\left(\frac{|d_t \mathbf{U}^n|  (|d_t\mathbf{U}^{n+1}| +  |d_t\mathbf{U}^{n}|)}{\max\{|\mathbf{U}^{n+1/2}|, \varepsilon\}}, |d_t \mathbf{U}^{n+1}|\right)_h
\\
&
\leq 
\frac{1}{\tau}\left(\frac{|d_t \mathbf{U}^n| (|d_t\mathbf{U}^{n+1}| +  |d_t\mathbf{U}^{n}|)}{\max\{|\mathbf{U}^{n+1/2}|, \varepsilon\}}, |\mathbf{U}^{n+1/2}| + |\mathbf{U}^n|\right)_h
\\
& \leq
\frac{C}{\tau}\left(1 + \frac{\|\mathbf{U}^n\|_{\mathbf{L}^\infty}}{\varepsilon} \right)\|d_t \mathbf{U}^n\|_h (\|d_t\mathbf{U}^{n+1}\|_h +  \|d_t\mathbf{U}^{n}\|_h).
\end{align*}
Noting (\ref{exest2}) we estimate
\begin{align*}
&(\lambda_{2}^{n+1} \mathbf{U}^{n+1/2},  d_t \mathbf{U}^{n+1})_h 
= 
\sum_{\bz\in\mathcal{N}_h} \beta_\bz \frac{\bigl( \nabla \mathbf{W}^{n+1/2}, \mathbf{U}^{n+1/2}(\bz) \otimes \nabla \varphi_{\bz} \bigr)}{\beta_{\bz}  \max\{\vert \mathbf{U}^{n+1/2}(\bz)\vert^2, \varepsilon\}}
\mathbf{U}^{n+1/2}(\bz) d_t \mathbf{U}^{n+1}(\bz)
\\
& \leq 
C \sum_{\bz\in\mathcal{N}_h}  \frac{\big(|\nabla \mathbf{W}^{n+1/2}|,  |\nabla \varphi_{\bz}| \bigr)_{\omega_{\bz}}}{\max\{\vert \mathbf{U}^{n+1/2}(\bz)\vert^2, \varepsilon\}}
|\mathbf{U}^{n+1/2}(\bz)|^2  |d_t \mathbf{U}^{n+1}(\bz)|
\leq \frac{C}{h^2} \|\Delta_h \mathbf{U}^{n+1/2}\|_h\| d_t \mathbf{U}^{n+1}\|_h,
\end{align*}
where we used
\begin{align*}
\big(|\nabla \mathbf{W}^{n+1/2}|,  |d_t \mathbf{U}^{n+1}(\bz)||\nabla \varphi_{\bz}| \bigr)_{\omega_{\bz}}
 & \leq \|\nabla \mathbf{W}^{n+1/2}\|_{L^2(\omega_\bz)}\||d_t \mathbf{U}^{n+1}(\bz)| |\nabla \varphi_{\bz}|\|_{L^2(\omega_\bz)}
\\
& \leq \frac{1}{h^2}\|\mathbf{W}^{n+1/2}\|_{L^2(\omega_\bz)}\||d_t \mathbf{U}^{n+1}(\bz)| |\varphi_{\bz}|\|_{L^2(\omega_\bz)}.
\end{align*}
Hence, we conclude that
\begin{align*}
& \frac{1}{2\tau} \left(1 -\frac{1}{4} - \frac{C\tau}{h^2}\right)\|d_t \mathbf{U}^{n+1}\|_h^2
   + \frac{1}{2\tau}\left(1-\frac{C\tau}{h^2}\right)\|\Delta_h \mathbf{U}^{n+1}\|_h^2  + \frac{\tau}{2} \|d_t^2 \mathbf{U}^{n+1}\|_h^2
\\
& \leq \frac{1}{2\tau}\left(1 + C\left(1 + \frac{\|\mathbf{U}^n\|_{\mathbf{L}^\infty}}{\varepsilon} \right)^2\right) \|d_t \mathbf{U}^{n}\|_h^2
 + \frac{1}{2\tau}\left(1+\frac{C\tau}{h^2}\right)\|\Delta_h \mathbf{U}^{n}\|_h^2.
%+ \frac{C}{\tau}\left(1 + \frac{{\red 1}}{\varepsilon} \right)\|d_t \mathbf{U}^n\|_h (\|d_t\mathbf{U}^{n+1}\|_h
% +  \|d_t\mathbf{U}^{n}\|_h)
\end{align*}
By the inverse estimate $\|v_h\|_{L^\infty}\leq h^{-d/2} \|v_h\|$, recalling the induction assumption, we deduce for sufficiently small $\tau \leq C h^2$ that
\begin{align*}
 \frac{\tau^2}{2} \|d_t \mathbf{U}^{n+1}\|_{L^\infty}^2
& \leq C \tau^2 h^{-d} \|d_t \mathbf{U}^{n+1}\|_h^2
% \leq  \frac{C\tau^2 h^{-d}}{4} \|d_t \mathbf{U}^{n+1}\|_{h}^2
\\
&
\leq \frac{C\tau^2 h^{-d}}{4}\left[\left(1 + C\left(1 + \frac{\|\mathbf{U}^n\|_{\mathbf{L}^\infty}}{\varepsilon} \right)^2\right) \|d_t \mathbf{U}^{n}\|_h^2 + 2\|\Delta_h \mathbf{U}^{n}\|_h^2\right]
\leq \tilde{C}\tau^2 h^{-d} < \frac{1}{2}.
\end{align*}
It follows that (\ref{dt_small}) is satisfied for sufficiently small $\tau \leq C h^{d/2} \leq C h^{2}$ for $d\leq 3$.
%\footnote{for the stabilized scheme for $\tau \leq C h^{\max\{2-\alpha, d/2\}}$}
\end{proof}
\begin{remark}
Compared to the (second order) wave map equation, which requires a time step restriction $\tau \leq C h^{\max\{1, d/2\}}$ for the existence of
the discrete solution \cite{num_harm09,num_move},
the bi-harmonic wave map equation requires a stronger restriction $\tau \leq C h^{2}$ (see Lemma~\ref{lem_exist}).
\rev{We remark that the fixed-point algorithm introduced in Section~\ref{sec_fixpnt} converges under the weaker condition $\tau \leq C h$
in practical computations, but its theoretical convergence analysis is still open
(similarly to the wave map case \cite{num_move})}.
%In the case of the stabilized scheme (\ref{scheme_stab}) in Section~\ref{sec_stab} below, the existence follows under an even more restrictive condition $\tau \leq C h^{3/\alpha/2}$ ($0< \alpha < 2$).
\end{remark}

\section{Convergence of the numerical approximation}\label{sec_conv}

Given the solution $\{\mathbf{U}^n\}_{n=0}^{N}\subset \mathbf{V}_h$
we consider the interpolants ${\bf u}_{\tau,h}, {\bf u}^-_{\tau,h}, \overline{{\bf u}}_{\tau,h}: \Omega_T \rightarrow \R^3$ such that for $t \in [t_n, t_{n+1}) \times \Omega$
\begin{eqnarray}\nonumber
&&{\bf u}_{\tau,h}(t) = \frac{t - t_n}{\tau} {\bf U}^{n+1} +
\frac{t_{n+1}-t}{\tau} {\bf U}^n \,,  \\ \label{notation}
&&{\bf u}^-_{\tau,h}(t) = {\bf U}^{n}\,, \qquad
{\bf u}^+_{\tau,h}(t) = {\bf U}^{n+1}\,, \\ \nonumber
&&\overline{{\bf u}}_{\tau,h}(t) := {\bf U}^{n+1/2},
\end{eqnarray}
and analogously we define the corresponding interpolants for $\{\mathbf{V}^{n}\}_{n=0}^N$ ($\mathbf{V}^{n} = \mathrm{d}_t \mathbf{U}^{n}$ for $n\geq 1$), and $\{\mathbf{W}^{n}\}_{n=0}^N$
($\mathbf{W}^{n} = -\Delta_h \mathbf{U}^{n}$ for $n\geq 1$).
%given $\{\mathbf{V}^{n}\}_{n=0}^N$ ($\mathbf{V}^{n} = \mathrm{d}_t \mathbf{U}^{n}$ for $n\geq 1$) we define
%$$
%{\bf v}_{\tau,h}(t) = \frac{t - t_n}{\tau} {\bf v}^{n+1} +
%\frac{t_{n+1}-t}{\tau} {\bf v}^n.
%$$
In the sequel we replace the subscript $\tau, h$ by $h$ to simplify the notation.
The scheme (\ref{scheme}) can be equivalently written in terms of the above interpolants for $\pmb{\Phi} \in {C^\infty_0}([0,T]; \mathbf{V}_h)$ as
\begin{equation}\label{schemeh}
\begin{split}
\int_0^T \left[\big(\partial_t {\bf v}_{h}, \pmb{\Phi}\big)_h + \big(\nabla \overline{{\bf w}}_{h}, \nabla \pmb{\Phi}\big) - \big(\lambda_h^{+}\overline{{\bf u}}_h, \pmb{\Phi}\big)_h\right]\mathrm{d}t = 0,
\\
\int_0^T \left[\big(\overline{{\bf w}}_{h}, \pmb{\Phi}\big) - \big(\nabla \overline{{\bf u}}_h, \nabla \pmb{\Phi}\big)_h\right]\mathrm{d}t = 0.
\end{split}
\end{equation}

In the next lemma we restate (\ref{schemeh}) in a form that is suitable for showing convergence to the weak formulation (\ref{weakform}).
\begin{lemma}\label{lem_reform}
For all $\pmb{\Psi}\in  C^\infty_0([0,T); C^\infty(\Omega;\mathbb{R}^3))$ it holds that
\begin{align}\label{schemeh2}
& \left|\int_0^T \left[-\big(\partial_t {\bf u}_{h}, \partial_t [{\bf u}_h \times \pmb{\Psi}]\big)_h + \big(\Delta_h \overline{{\bf u}}_{h}, \Delta_h \mathcal{I}_h [\overline{{\bf u}}_h \times \pmb{\Psi}]\big)_h \right]\mathrm{d}t
- \big(\mathbf{V}^0, \mathbf{U}^0\times \pmb{\Psi}(0)\big)_h \right|
\\ \nonumber
&\qquad  \leq \left| \int_0^T \big([{\bf v}_{h} - {\bf v}_{h}^+], \partial_t [{\bf u}_h \times \pmb{\Psi}]\big)_h \mathrm{d} t\right| 
+ C\tau^{1/2}\mathcal{E}_h(\mathbf{V}^0, \mathbf{U}^0)^{1/2}\|\pmb{\Psi}\|_{L^\infty(\Omega_T)}.
\end{align}
\end{lemma}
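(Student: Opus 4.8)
The plan is to start from the discrete scheme written in interpolant form \eqref{schemeh} and test with $\pmb{\Phi} = \mathcal{I}_h[\overline{{\bf u}}_h \times \pmb{\Psi}]$, which lies in ${\bf V}_h$ for each fixed $t$ since $\overline{{\bf u}}_h \in {\bf V}_h$ and $\pmb{\Psi}$ is smooth; the nodal interpolation is needed because the pointwise product of two piecewise-linear functions is not piecewise linear. The crucial algebraic fact is that the Lagrange multiplier term vanishes with this choice: at each node $\bz$, $\langle \lambda_h^+(\bz)\overline{{\bf u}}_h(\bz), (\overline{{\bf u}}_h \times \pmb{\Psi})(\bz)\rangle = \lambda_h^+(\bz)\langle \overline{{\bf u}}_h(\bz), \overline{{\bf u}}_h(\bz)\times\pmb{\Psi}(\bz)\rangle = 0$, and since $(\cdot,\cdot)_h$ is a nodal quadrature this kills the whole $\big(\lambda_h^+\overline{{\bf u}}_h, \mathcal{I}_h[\overline{{\bf u}}_h\times\pmb{\Psi}]\big)_h$ term. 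So the first equation in \eqref{schemeh} becomes
$$
\int_0^T \Big[\big(\partial_t {\bf v}_h, \mathcal{I}_h[\overline{{\bf u}}_h\times\pmb{\Psi}]\big)_h + \big(\nabla\overline{{\bf w}}_h, \nabla\mathcal{I}_h[\overline{{\bf u}}_h\times\pmb{\Psi}]\big)\Big]\mathrm{d}t = 0.
$$
Then I would use the definition of the discrete Laplacian \eqref{disc_lap} to rewrite $\big(\nabla\overline{{\bf w}}_h, \nabla\mathcal{I}_h[\overline{{\bf u}}_h\times\pmb{\Psi}]\big) = -\big(\Delta_h\overline{{\bf w}}_h, \mathcal{I}_h[\overline{{\bf u}}_h\times\pmb{\Psi}]\big)_h$; but since $\overline{{\bf w}}_h = -\Delta_h\overline{{\bf u}}_h$ (from the second equation of \eqref{schemeh} together with \eqref{disc_lap}) and using self-adjointness of $\Delta_h$ in $(\cdot,\cdot)_h$, this equals $\big(\Delta_h\overline{{\bf u}}_h, \Delta_h\mathcal{I}_h[\overline{{\bf u}}_h\times\pmb{\Psi}]\big)_h$. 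That produces the second term in the target display exactly.

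Next I handle the time-derivative term. Integration by parts in $t$ (legitimate because $\pmb{\Psi}(T)=0$ and $\pmb{\Psi}$ is smooth, and $\mathcal{I}_h$ commutes with $\partial_t$ acting on the $\pmb{\Psi}$-factor) moves one derivative off ${\bf v}_h$:
$$
\int_0^T \big(\partial_t{\bf v}_h, \mathcal{I}_h[\overline{{\bf u}}_h\times\pmb{\Psi}]\big)_h\,\mathrm{d}t = -\int_0^T \big({\bf v}_h, \partial_t\mathcal{I}_h[\overline{{\bf u}}_h\times\pmb{\Psi}]\big)_h\,\mathrm{d}t - \big({\bf v}_h(0), \mathcal{I}_h[\overline{{\bf u}}_h\times\pmb{\Psi}](0)\big)_h.
$$
Here one must be careful: ${\bf v}_h$ is the piecewise-linear-in-time interpolant of the $\mathbf{V}^n$, so $\partial_t{\bf v}_h$ is the piecewise constant $d_t\mathbf{V}^{n+1}$ on $(t_n,t_{n+1})$, which is exactly compatible with the discrete scheme; and ${\bf v}_h(0)=\mathbf{V}^0$, $\overline{{\bf u}}_h(0)=\mathbf{U}^{1/2}$. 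The boundary term gives $\big(\mathbf{V}^0, \mathcal{I}_h[\mathbf{U}^{1/2}\times\pmb{\Psi}(0)]\big)_h$, and since $\mathbf{U}^{1/2}\times\pmb{\Psi}(0)$ and $\mathbf{U}^0\times\pmb{\Psi}(0)$ agree up to $\mathcal{O}(\tau)$ in the relevant norm — with $\|\mathbf{U}^{1/2}-\mathbf{U}^0\|_h = \frac{\tau}{2}\|\mathbf{V}^1\|_h$ controlled by $\tau^{1/2}\mathcal{E}_h(\mathbf{V}^0,\mathbf{U}^0)^{1/2}$ via the energy law \eqref{discener} — this contributes the $C\tau^{1/2}\mathcal{E}_h(\mathbf{V}^0,\mathbf{U}^0)^{1/2}\|\pmb{\Psi}\|_{L^\infty(\Omega_T)}$ error on the right-hand side, together with a term matching $\big(\mathbf{V}^0,\mathbf{U}^0\times\pmb{\Psi}(0)\big)_h$ on the left (modulo replacing $\mathcal{I}_h[\mathbf{U}^0\times\pmb{\Psi}(0)]$ inside $(\cdot,\cdot)_h$ by $\mathbf{U}^0\times\pmb{\Psi}(0)$, which is free since $(\cdot,\cdot)_h$ only sees nodal values).

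The remaining discrepancy is between $-\int_0^T\big({\bf v}_h, \partial_t\mathcal{I}_h[\overline{{\bf u}}_h\times\pmb{\Psi}]\big)_h\,\mathrm{d}t$ (what the scheme produces) and the target term $-\int_0^T\big(\partial_t{\bf u}_h, \partial_t[{\bf u}_h\times\pmb{\Psi}]\big)_h\,\mathrm{d}t$. Two things must be reconciled: first, $\mathcal{I}_h$ can be dropped inside $(\cdot,\cdot)_h$ (nodal quadrature), so $\mathcal{I}_h[\overline{{\bf u}}_h\times\pmb{\Psi}]$ becomes $\overline{{\bf u}}_h\times\pmb{\Psi}$; second, and this is the main obstacle, one must replace $\overline{{\bf u}}_h = {\bf u}_h^{n+1/2} = \frac12({\bf u}_h^+ + {\bf u}_h^-)$ by ${\bf u}_h$ and replace ${\bf v}_h$ by $\partial_t{\bf u}_h$. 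Note $\partial_t{\bf u}_h = d_t\mathbf{U}^{n+1} = \mathbf{V}^{n+1} = {\bf v}_h^+$ on $(t_n,t_{n+1})$, so ${\bf v}_h$ versus $\partial_t{\bf u}_h$ is precisely the difference ${\bf v}_h - {\bf v}_h^+$, which is why that term appears on the right-hand side of \eqref{schemeh2}; I would split $\big({\bf v}_h,\partial_t[\cdot]\big)_h = \big({\bf v}_h^+,\partial_t[\cdot]\big)_h + \big({\bf v}_h-{\bf v}_h^+,\partial_t[\cdot]\big)_h$ and keep the second piece as the stated error. The substitution $\overline{{\bf u}}_h \leadsto {\bf u}_h$ inside the bracket is the delicate point: on $(t_n,t_{n+1})$ the time derivative $\partial_t[\overline{{\bf u}}_h\times\pmb{\Psi}] = \mathbf{U}^{n+1/2}\times\partial_t\pmb{\Psi}$ (since $\overline{{\bf u}}_h$ is constant in $t$ on each subinterval), whereas $\partial_t[{\bf u}_h\times\pmb{\Psi}] = \mathbf{V}^{n+1}\times\pmb{\Psi} + {\bf u}_h\times\partial_t\pmb{\Psi}$; the plan is to pair ${\bf v}_h^+ = \mathbf{V}^{n+1}$ against both and observe that $\big(\mathbf{V}^{n+1}, \mathbf{V}^{n+1}\times\pmb{\Psi}\big)_h = 0$ pointwise (cross product), killing the extra term, and that the time-average of ${\bf u}_h$ over $(t_n,t_{n+1})$ is exactly $\mathbf{U}^{n+1/2} = \overline{{\bf u}}_h$, so $\int_{t_n}^{t_{n+1}}\big(\mathbf{V}^{n+1}, {\bf u}_h\times\partial_t\pmb{\Psi}\big)_h\,\mathrm{d}t$ is not literally equal to $\int_{t_n}^{t_{n+1}}\big(\mathbf{V}^{n+1}, \mathbf{U}^{n+1/2}\times\partial_t\pmb{\Psi}\big)_h\,\mathrm{d}t$ but differs by a term involving $\partial_t\partial_t\pmb{\Psi}$ times $\tau$-weighted moments — this must be bounded by $C\tau\,\|\mathbf{V}^{n+1}\|_h\|\pmb{\Psi}\|_{C^2}$ on each interval and summed, absorbed into the $\mathcal{O}(\tau^{1/2}\mathcal{E}_h^{1/2})$ term via Cauchy–Schwarz in $n$ and the energy bound $\sum_n\tau\|\mathbf{V}^{n+1}\|_h^2 \le C\mathcal{E}_h(\mathbf{V}^0,\mathbf{U}^0)$. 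Carefully bookkeeping which error goes into which of the two right-hand-side terms of \eqref{schemeh2} — and in particular checking that the $\Delta_h$-term needs no modification (it already has $\overline{{\bf u}}_h$ on both sides) — is the part that requires the most attention; everything else is routine manipulation of the discrete inner product and interpolation estimate \eqref{est_interpol}.
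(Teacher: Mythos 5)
Your choice of test function $\pmb{\Phi}=\mathcal{I}_h[\overline{{\bf u}}_h\times\pmb{\Psi}]$, the observation that the Lagrange multiplier term vanishes nodally, and the rewriting of the gradient term via $\overline{{\bf w}}_h=-\Delta_h\overline{{\bf u}}_h$ and the self-adjointness of $\Delta_h$ in $(\cdot,\cdot)_h$ all coincide with the paper's argument. However, there is a genuine gap in your treatment of the time-derivative term: you integrate by parts in time while $\overline{{\bf u}}_h$ is still inside the bracket. Since $\overline{{\bf u}}_h$ is piecewise \emph{constant} in time, equal to $\mathbf{U}^{n+1/2}$ on $[t_n,t_{n+1})$, the function $\mathcal{I}_h[\overline{{\bf u}}_h\times\pmb{\Psi}]$ is discontinuous at every interior time node, so the identity
\begin{equation*}
\int_0^T \big(\partial_t{\bf v}_h, \mathcal{I}_h[\overline{{\bf u}}_h\times\pmb{\Psi}]\big)_h\,\mathrm{d}t
= -\int_0^T \big({\bf v}_h, \partial_t\mathcal{I}_h[\overline{{\bf u}}_h\times\pmb{\Psi}]\big)_h\,\mathrm{d}t
- \big({\bf v}_h(0), \mathcal{I}_h[\overline{{\bf u}}_h\times\pmb{\Psi}](0)\big)_h
\end{equation*}
is false as written: integrating by parts on each subinterval produces jump terms $\big(\mathbf{V}^{n+1}, \mathcal{I}_h[(\mathbf{U}^{n+3/2}-\mathbf{U}^{n+1/2})\times\pmb{\Psi}(t_{n+1})]\big)_h$ at each node. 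Each such jump is of size $\mathcal{O}(\tau)$ and there are $\mathcal{O}(\tau^{-1})$ of them, so their sum is $\mathcal{O}(1)$ and in fact carries exactly the ``$\mathbf{V}\times\pmb{\Psi}$'' part of $\partial_t[{\bf u}_h\times\pmb{\Psi}]$ that you later try to recover by other means; dropping them is not a harmless error. Your subsequent reconciliation (time-averaging ${\bf u}_h$ against $\mathbf{U}^{n+1/2}$, invoking $\partial_t^2\pmb{\Psi}$) both rests on this flawed identity and would produce error terms measured in $\|\pmb{\Psi}\|_{C^2}$ in time, which do not fit the right-hand side of \eqref{schemeh2}, where only $\|\pmb{\Psi}\|_{L^\infty(\Omega_T)}$ appears.

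The paper performs the same three ingredients in a different order precisely to avoid this. It first replaces $\overline{{\bf u}}_h$ by the time-continuous interpolant ${\bf u}_h$ \emph{before} any integration by parts, writing $\big(\partial_t{\bf v}_h,\overline{{\bf u}}_h\times\pmb{\Psi}\big)_h=\big(\partial_t{\bf v}_h,{\bf u}_h\times\pmb{\Psi}\big)_h+\big(\partial_t{\bf v}_h,(\overline{{\bf u}}_h-{\bf u}_h)\times\pmb{\Psi}\big)_h$ and using $\overline{{\bf u}}_h-{\bf u}_h=\big(\tfrac12(t_{n+1}+t_n)-t\big){\bf v}_h^+$ together with the numerical dissipation term in \eqref{discener} to bound the second piece by $C\tau^{1/2}\mathcal{E}_h(\mathbf{V}^0,\mathbf{U}^0)^{1/2}\|\pmb{\Psi}\|_{L^\infty(\Omega_T)}$. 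Only then does it integrate by parts, which is legitimate because ${\bf u}_h\times\pmb{\Psi}$ is continuous in time, yielding the single boundary term $\big(\mathbf{V}^0,\mathbf{U}^0\times\pmb{\Psi}(0)\big)_h$, and finally it splits $-{\bf v}_h=-\partial_t{\bf u}_h-({\bf v}_h-{\bf v}_h^+)$ to produce the first right-hand-side term of \eqref{schemeh2}. If you reorder your argument in this way, the jump terms never arise and no $C^2$-norms of $\pmb{\Psi}$ are needed.
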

\begin{proof}
We take $\pmb{\Phi} = \mathcal{I}_h [\overline{{\bf u}}_h \times \pmb{\Psi}]$ with $\pmb{\Psi} \in C^\infty_0([0,T); C^\infty(\Omega;\mathbb{R}^3)$ in (\ref{schemeh})
and note that the term containing $\lambda_h^{+}$ disappears by orthogonality.
Next, we rewrite the first term as
\begin{equation}\label{est_vt}
\big(\partial_t {\bf v}_{h}, \overline{{\bf u}}_h \times \pmb{\Psi}\big)_h = \big(\partial_t {\bf v}_{h}, {\bf u}_h \times \pmb{\Psi} \big)_h + \big(\partial_t {\bf v}_{h}, (\overline{{\bf u}}_h - {\bf u}_h) \times \pmb{\Psi} \big)_h.
\end{equation}
Noting that
$$
\overline{{\bf u}}_h - {\bf u}_h =\Big(\frac{1}{2}(t_{n+1}+ t_n) - t\Big) {\bf v}_h^+\qquad \text{for } t\in [t_n, t_{n+1}),
$$
we estimate
$\big(\partial_t {\bf v}_{h}, (\overline{{\bf u}}_h - {\bf u}_h) \times \pmb{\Psi} \big)_h \leq \frac{\tau}{2}\|{\partial_t {\bf v}_{h}}\|_h\|{\bf v}_h^+\|_h\|\pmb{\Psi}\|_{L^\infty}$.
Hence, we use the discrete energy estimate (\ref{discener}) to bound the second term in (\ref{est_vt}) as
\begin{align*}
\int_0^T \big(\partial_t {\bf v}_{h}, (\overline{{\bf u}}_h - {\bf u}_h) \times \pmb{\Psi} \big)_h\mathrm{d}t & \leq 
C \tau^{1/2} \left(\int_0^T\tau \|{ \partial_t {\bf v}_{h}}\|_h^2\mathrm{d} t\right)^{1/2}\|\pmb{\Psi}\|_{L^\infty(\Omega_T)}
\\
 & \leq C\tau^{1/2}\mathcal{E}_h(\mathbf{V}^0, \mathbf{U}^0)^{1/2}\|\pmb{\Psi}\|_{L^\infty(\Omega_T)}.
\end{align*}
We use integration by parts and that $\partial_t {\bf u}_h = {\bf v}_h^+$ to rewrite the the first term in (\ref{est_vt}) as
\begin{align*}
 \int_0^T \big(\partial_t {\bf v}_{h}, {\bf u}_h \times \pmb{\Psi} \big)_h \mathrm{d} t  = 
- \int_0^T \big({\bf v}_{h}, \partial_t [{\bf u}_h \times \pmb{\Psi}]\big)_h \mathrm{d} t - \big(\mathbf{V}^0, \mathbf{U}^0\times \pmb{\Psi}(0)\big)_h
\\
\qquad = 
 \int_0^T \big(- \partial_t {\bf u}_{h} - [{\bf v}_{h} - {\bf v}_{h}^+], \partial_t [{\bf u}_h \times \pmb{\Psi}]\big)_h \mathrm{d} t - \big(\mathbf{V}^0, \mathbf{U}^0\times \pmb{\Psi}(0)\big)_h.
\end{align*}
By the definition of the discrete Laplacian (\ref{disc_lap}) we note that the second equation in (\ref{schemeh}) implies that $\overline{{\bf w}}_{h} = -\Delta_h \overline{{\bf u}}_{h}$.
Hence, using again (\ref{disc_lap}) we rewrite
$$
\big(\nabla \overline{{\bf w}}_{h}, \nabla \mathcal{I}_h [\overline{{\bf u}}_h \times \pmb{\Psi}] \big)
= \big(\overline{{\bf w}}_{h}, -\Delta_h \mathcal{I}_h [\overline{{\bf u}}_h \times \pmb{\Psi}] \big)_h = \big(\Delta_h \overline{{\bf u}}_{h}, \Delta_h \mathcal{I}_h [\overline{{\bf u}}_h \times \pmb{\Psi}]\big)_h.
$$
The inequality (\ref{schemeh2}) then follows from the last three identities above.
\end{proof}

%%%%%%%%%%%%%%%%%%%%%%%%%%%
%%%%%%%%%%%%%%%%%%%%%%%%%%%
%%%%%%%%%%%%%%%%%%%%%%%%%%%
Next, we state a discrete analogue of the Aubin-Lions-Simin compactness lemma for the embedding $H^2\subset H^1$,
which provides strong $H^1$ convergence of the numerical approximation. This strong convergence is required to identify the limits of the nonlinear terms in the numerical scheme.
\begin{lemma}\label{lem_compact}
Let $\{u_h\}_{h>0}$ be a sequence in $\{L^2(0,T,V_h)\}_{h>0}$ such that
\begin{align*}
\int_{0}^T\|u_h\|_{h}^2\mathrm{d}t \leq C, \quad \int_{0}^T\|\Delta_h u_h\|_h^2\mathrm{d}t  \leq C \text{ and }
\int_{0}^T\|\partial_t u_h\|_h^2\mathrm{d}t  \leq C.
\end{align*}
Then also $\D \int_{0}^T\|\nabla u_h\|_h^2\mathrm{d}t  \leq C$ for $h>0$ and there exists a limit $u\in L^2(0,T; H^2(\Omega))$ and a subsequence such that
\begin{align*}
u_h & \rightarrow u   &&   \text{ in } L^2(0,T; H^{1}(\Omega)),\\
\Delta_h u_h & \rightharpoonup \Delta u && \text{in } L^2(0,T; L^2(\Omega)).
\end{align*}
\end{lemma}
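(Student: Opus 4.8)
The plan is to upgrade the weak $H^1$-convergence of $u_h$ to strong $H^1$-convergence by exploiting the elementary identity $\|\nabla u_h\|^2=-(\Delta_h u_h,u_h)_h$, which follows at once from testing \eqref{disc_lap} with $\varphi_h=u_h(t)$. First I would use this identity, Cauchy--Schwarz for $(\cdot,\cdot)_h$ and Cauchy--Schwarz in time to get $\int_0^T\|\nabla u_h\|^2\,\mathrm{d}t\le\big(\int_0^T\|\Delta_h u_h\|_h^2\,\mathrm{d}t\big)^{1/2}\big(\int_0^T\|u_h\|_h^2\,\mathrm{d}t\big)^{1/2}\le C$, which is the first assertion. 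With the norm equivalence \eqref{normeq} this shows that $\{u_h\}$ is bounded in $L^2(0,T;H^1(\Omega))$, while $\{\Delta_h u_h\}$ and $\{\partial_t u_h\}$ are bounded in $L^2(0,T;L^2(\Omega))$. Passing to a subsequence (not relabelled) we obtain limits $u_h\rightharpoonup u$ in $L^2(0,T;H^1(\Omega))$, $\partial_t u_h\rightharpoonup\partial_t u$ in $L^2(0,T;L^2(\Omega))$ and $\Delta_h u_h\rightharpoonup\chi$ in $L^2(0,T;L^2(\Omega))$ for some $u$ and $\chi$.

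Second, I would get strong convergence in $L^2(\Omega_T)$: since $\{u_h\}$ is bounded in $L^2(0,T;H^1(\Omega))$ and $\{\partial_t u_h\}$ in $L^2(0,T;L^2(\Omega))$, and $H^1(\Omega)$ embeds compactly into $L^2(\Omega)$, which in turn embeds continuously into $H^{-1}(\Omega)$, the Aubin--Lions--Simon lemma yields, along a further subsequence, $u_h\to u$ strongly in $L^2(0,T;L^2(\Omega))$ (the strong limit being the weak limit $u$ by uniqueness). To identify $\chi$ I would test \eqref{disc_lap} with $\varphi_h=\mathcal{I}_h\phi$ for $\phi\in C^\infty(\overline{\Omega})$, multiply by $\eta\in C_0^\infty(0,T)$, integrate over $(0,T)$, and let $h\to0$: the interpolation estimate \eqref{est_interpol} and \eqref{convdiscnorm} allow the replacement of $\mathcal{I}_h\phi$ by $\phi$ and of $(\cdot,\cdot)_h$ by $(\cdot,\cdot)$ up to terms that are $O(h)$ after integration in time, so that together with $\nabla u_h\rightharpoonup\nabla u$ and $\Delta_h u_h\rightharpoonup\chi$ one arrives at $(\nabla u(t),\nabla\phi)=-(\chi(t),\phi)$ for a.e.\ $t$ and all $\phi\in H^1(\Omega)$ (by a density argument). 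Choosing $\phi\in C_0^\infty(\Omega)$ gives $\Delta u=\chi$, whence $\Delta u\in L^2(\Omega_T)$; since $\Omega$ is convex, standard $H^2$-regularity for the homogeneous Neumann problem with $L^2$ right-hand side gives $u\in L^2(0,T;H^2(\Omega))$ with $\partial u/\partial{\bf n}=0$ on $\partial\Omega$, and in particular $\Delta_h u_h\rightharpoonup\Delta u$.

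The decisive step is the strong convergence $\nabla u_h\to\nabla u$ in $L^2(\Omega_T)$. Since $\nabla u_h\rightharpoonup\nabla u$ weakly there, it suffices to prove norm convergence. Starting again from $\int_0^T\|\nabla u_h\|^2\,\mathrm{d}t=-\int_0^T(\Delta_h u_h,u_h)_h\,\mathrm{d}t$, I would invoke \eqref{convdiscnorm} to replace $(\cdot,\cdot)_h$ by $(\cdot,\cdot)$ with an error bounded, by Cauchy--Schwarz in time, by $Ch\int_0^T\|\Delta_h u_h\|\,\|\nabla u_h\|\,\mathrm{d}t\le Ch\to0$, and then pass to the limit in $\int_0^T(\Delta_h u_h,u_h)\,\mathrm{d}t$ using the weak convergence $\Delta_h u_h\rightharpoonup\Delta u$ together with the \emph{strong} convergence $u_h\to u$ in $L^2(\Omega_T)$ from the previous step; this gives $\int_0^T\|\nabla u_h\|^2\,\mathrm{d}t\to-\int_0^T(\Delta u,u)\,\mathrm{d}t$. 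Finally, testing the identity $(\nabla u(t),\nabla\phi)=-(\Delta u(t),\phi)$ with $\phi=u(t)$ yields $-\int_0^T(\Delta u,u)\,\mathrm{d}t=\int_0^T\|\nabla u\|^2\,\mathrm{d}t$, so $\int_0^T\|\nabla u_h\|^2\,\mathrm{d}t\to\int_0^T\|\nabla u\|^2\,\mathrm{d}t$; norm convergence together with weak convergence in the Hilbert space $L^2(\Omega_T)$ forces $\nabla u_h\to\nabla u$ strongly, i.e.\ $u_h\to u$ in $L^2(0,T;H^1(\Omega))$.

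I expect the main obstacle to be precisely this last step: it is the reason the lemma is not a corollary of classical compactness results, since the discrete iterates are only $H^1$-regular in space so the compact embedding $H^2(\Omega)\subset H^1(\Omega)$ is unavailable. The supporting technical difficulties are the careful bookkeeping of the difference between the mass-lumped and exact $L^2$ inner products (everywhere absorbed into $O(h)$ terms via \eqref{convdiscnorm} and \eqref{est_interpol}) and, crucially, making the boundary term in the integration by parts vanish, which requires first identifying that the limit $u$ inherits the homogeneous Neumann condition hidden in the definition \eqref{disc_lap} of the discrete Laplacian.
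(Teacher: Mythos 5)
Your proposal is correct and follows essentially the same route as the paper: the gradient bound via $\|\nabla u_h\|^2=-(\Delta_h u_h,u_h)_h$, Aubin--Lions--Simon for strong $L^2(\Omega_T)$ convergence, identification of the weak limit of $\Delta_h u_h$ with $\Delta u$ through the definition \eqref{disc_lap} and \eqref{convdiscnorm}, and finally strong $H^1$ convergence by proving convergence of $\int_0^T\|\nabla u_h\|^2\,\mathrm{d}t$ via the weak--strong pairing of $\Delta_h u_h$ with $u_h$. The only (harmless) deviations are cosmetic: you bound the gradient by Cauchy--Schwarz in the lumped inner product where the paper uses \eqref{convdiscnorm} plus an inverse estimate, and you make explicit the convex-domain elliptic regularity step giving $u\in L^2(0,T;H^2(\Omega))$, which the paper leaves implicit.
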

\begin{proof}
The equivalence of norms (\ref{normeq})
implies that $\{u_h\}$, $\{\Delta_h u_h\}$ are bounded in $L^2(0,T; L^2(\Omega))$.
Moreover, noting (\ref{convdiscnorm}) we bound by the Cauchy-Schwarz and Young inequalities
\begin{align}\label{h1_dbnd}
\|\nabla u_h\|^2 & = (\nabla u_h, \nabla u_h) = -(\Delta_h u_h, u_h)_h \leq |(\Delta_h u_h, u_h)| + Ch \|\Delta_h u_h\| \|\nabla u_h\|
\\ \nonumber
& \leq (1+C)\|\Delta_h u_h\| \| u_h\| \leq C\left(\|\Delta_h u_h\|^2 +  \| u_h\|^2\right),
\end{align}
where we also used the inverse inequality $\|\nabla u_h\| \leq C h^{-1}\|u_h\|$ and (\ref{convdiscnorm}).

The above inequality along with the bounds for $\{u_h\}$, $\{\Delta_h u_h\}$ implies the sequence $\{u_h\}$ is also bounded in $L^2(0,T; H^{1}(\Omega))$.
Hence, we deduce that there exist a subsequence (not relabeled) such that $u_h\rightharpoonup u$ in $L^2(0,T; H^{1}(\Omega))$.
Moreover, since $ \partial_t u_h$ is bounded in $L^2(0,T; L^2(\Omega))$ a standard Aubin-Lions-Simon result, see, e.g., \cite[Theorem 2.1, Ch. III]{book_temam},
implies strong convergence $u_h \rightarrow u$ in $L^2(0,T; L^2(\Omega))$.

The bound $\|\Delta_h u_h\|_{L^2(0,T;L^2)} \leq C$
implies the existence of a limit $w\in L^2(0,T; L^2(\Omega))$ such that $\Delta_h u_h \rightharpoonup w$ in $L^2(0,T; L^2(\Omega))$.
By the weak convergence of  $\{u_h\}$ in $L^2(0,T; H^{1}(\Omega))$ we deduce that
$w=\Delta u$ since (taking a strongly converging sequence $\varphi_h \rightarrow \varphi$, e.g., $\varphi_h = \mathcal{I}_h \varphi$)
\begin{align*}
& \int_0^T (\nabla u, \nabla \phi) \mathrm{d}t  = \lim_{h\rightarrow 0}\int_0^T (\nabla u_h, \nabla \phi_h) \mathrm{d}t = \lim_{h\rightarrow 0} \int_0^T  (-\Delta_h u_h, \phi_h)_h\mathrm{d}t
\\
& = \lim_{h\rightarrow 0} \int_0^T  \big[(-\Delta_h u_h, \phi_h) + \mathcal{O}(h)\big]\mathrm{d}t = \lim_{h\rightarrow 0} \int_0^T  (-\Delta_h u_h, \phi)\mathrm{d}t = \int_0^T (-w,\phi)\mathrm{d}t,
\end{align*}
where we used that (\ref{convdiscnorm}) implies
$$
 |(-\Delta_h u_h, \phi_h)_h - (-\Delta_h u_h, \phi_h)| \leq Ch \|\Delta_h u_h\| \|\nabla \phi_h\|.
$$
Since $\nabla u_h \rightharpoonup \nabla u$ the strong convergence of the gradient follows if we show the convergence of the norm
$\lim_{h\rightarrow 0}\int_0^T \|\nabla u_h\|^2 \mathrm{d}t =  \int_0^T \|\nabla u\|^2  \mathrm{d}t$.
Consequently, using that $\Delta_h u_h\rightharpoonup \Delta u$, $u_h \rightarrow u$ in $L^2(0,T; L^2(\Omega))$ and (\ref{convdiscnorm}) we conclude that
\begin{align*}
& \lim_{h\rightarrow 0} \int_0^T \|\nabla u_h\|^2 \mathrm{d}t = \lim_{h\rightarrow 0} \int_0^T (\nabla u_h, \nabla u_h)\mathrm{d}t
= \lim_{h\rightarrow 0} \int_0^T (-\Delta u_h, u_h)_h\mathrm{d}t
\\
& = \int_0^T (-\Delta u, u)\mathrm{d}t = \int_0^T \|\nabla u\|^2\mathrm{d}t.
\end{align*}
\end{proof}

Next we formulate a (perturbed) discrete product rule for the discrete Laplace operator. The lemma enables us to use discrete orthogonality property in the sequel,
which is required to identify the limit of the second term in on the left-had side of (\ref{schemeh2}).
\begin{lemma}\label{lem_product}
The following discrete product rule holds for the discrete Laplacian (\ref{disc_lap}) on right-angled triangulations:
\begin{align*}
& \left|(-\Delta_h \mathcal{I}_h [v_h w], u_h)_h - \big[(-\Delta_h v_h)\mathcal{I}_h w, u_h)_h + (v_h (-\Delta_h \mathcal{I}_h w), u_h)_h
-2(\nabla v_h \nabla\mathcal{I}_h w, u_h)\big]\right|
\\
&\qquad \leq C  h \|\nabla u_h\|\|\nabla v_h\|\|\nabla w\|_{L^\infty(\Omega)}  .
\end{align*}
for any $ u_h,v_h \in V_h$ and $w\in C(\overline{\Omega})\cap W^{1,\infty}(\Omega)$.
\end{lemma}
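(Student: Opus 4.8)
The plan is to reduce first to the case where $w$ is itself a piecewise linear finite element function, then to rewrite the two ``discrete'' terms on the left-hand side as ordinary $H^1$-inner products, expand the three nodally interpolated products element by element, observe that the leading-order contributions reproduce the continuous product rule and cancel exactly against the cross term $2(\nabla v_h\,\nabla\mathcal{I}_h w,u_h)$, and finally bound the remaining quadratic corrections by $\mathcal{O}(h)$. Concretely, for Step~1: every occurrence of $w$ on the left-hand side enters either under the nodal interpolant $\mathcal{I}_h$ or only through its nodal values (recall $(\mathcal{I}_h w)(\bz)=w(\bz)$ for $\bz\in\N$ and $\mathcal{I}_h\mathcal{I}_h w=\mathcal{I}_h w$), so the left-hand side is unchanged when $w$ is replaced by $w_h:=\mathcal{I}_h w\in V_h$; by the $W^{1,\infty}$-stability of $\mathcal{I}_h$ recalled in Section~\ref{sec_not} one has $\|\nabla w_h\|_{L^\infty(\Omega)}\le C\|\nabla w\|_{L^\infty(\Omega)}$, hence it suffices to prove the estimate with $w_h\in V_h$ in place of $w$.

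For Step~2, using the definition \eqref{disc_lap} of $\Delta_h$ and that the discrete inner product depends on its arguments only through their nodal values (so $((-\Delta_h v_h)\mathcal{I}_h w_h,u_h)_h=(-\Delta_h v_h,\mathcal{I}_h[w_hu_h])_h$, and similarly for the term with $v_h(-\Delta_h\mathcal{I}_h w_h)$), I would rewrite the three terms containing $\Delta_h$ as
$$
(-\Delta_h\mathcal{I}_h[v_hw_h],u_h)_h=(\nabla\mathcal{I}_h[v_hw_h],\nabla u_h),\quad
((-\Delta_h v_h)\mathcal{I}_h w_h,u_h)_h=(\nabla v_h,\nabla\mathcal{I}_h[w_hu_h]),
$$
$$
(v_h(-\Delta_h\mathcal{I}_h w_h),u_h)_h=(\nabla\mathcal{I}_h w_h,\nabla\mathcal{I}_h[v_hu_h]).
$$
After this step all four terms appearing in the asserted inequality are ordinary $L^2$/$H^1$ inner products and can be expanded over the simplices $T\in\T_h$.

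For Step~3, on a simplex $T$ with barycentric coordinates $\varphi_0,\dots,\varphi_d$ and affine functions $a_h,b_h$ with nodal values $a_i,b_i$ one has the elementary identity $\mathcal{I}_h[a_hb_h]=a_hb_h+\tfrac12\sum_{i,j}(a_i-a_j)(b_i-b_j)\varphi_i\varphi_j$ on $T$, hence $\nabla\mathcal{I}_h[a_hb_h]|_T=b_h\nabla a_h+a_h\nabla b_h+R^T_{ab}$ with $R^T_{ab}:=\tfrac12\sum_{i,j}(a_i-a_j)(b_i-b_j)\nabla(\varphi_i\varphi_j)$. Substituting this into the three interpolated-product terms and expanding, the six leading contributions of the form $\int_T c_h\,\nabla a_h\cdot\nabla d_h$ split as follows: four of them group into two identically-cancelling pairs, and the remaining two — both equal to $-\int_\Omega u_h\,\nabla v_h\cdot\nabla w_h$, which is exact since $\nabla v_h,\nabla w_h$ are piecewise constant — cancel against the cross term $2(\nabla v_h\cdot\nabla w_h,u_h)$ by $-1-1+2=0$. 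Consequently the left-hand side of the lemma equals $\sum_{T\in\T_h}\big[(R^T_{vw},\nabla u_h)_T-(\nabla v_h,R^T_{wu})_T-(\nabla w_h,R^T_{vu})_T\big]$.

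For Step~4 it remains to estimate these remainders. Using $|a_i-a_j|\le h_T\|\nabla a_h\|_{L^\infty(T)}$, the identity $\int_T\nabla(\varphi_i\varphi_j)\,\mathrm{d}x=\tfrac{|T|}{d+1}(\nabla\varphi_i+\nabla\varphi_j)$, the bound $|\nabla\varphi_i|\le Ch_T^{-1}$, and the fact that $\nabla u_h,\nabla v_h,\nabla w_h$ are constant on $T$, one obtains e.g.\ $|(R^T_{vw},\nabla u_h)_T|\le Ch_T|T|\,\|\nabla u_h\|_{L^\infty(T)}\|\nabla v_h\|_{L^\infty(T)}\|\nabla w_h\|_{L^\infty(\Omega)}$ and the analogous estimates for the other two terms; summing over $T\in\T_h$, pulling $\|\nabla w_h\|_{L^\infty(\Omega)}$ out, and using the Cauchy--Schwarz inequality $\sum_T|T|\,\|\nabla u_h\|_{L^\infty(T)}\|\nabla v_h\|_{L^\infty(T)}\le\|\nabla u_h\|\,\|\nabla v_h\|$ gives the bound $Ch\,\|\nabla u_h\|\,\|\nabla v_h\|\,\|\nabla w_h\|_{L^\infty(\Omega)}$, which together with Step~1 proves the claim (the constants depending only on the shape-regularity of $\T_h$). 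I expect the main bookkeeping obstacle to be Step~3, namely organizing the expansion of the three interpolated products so that the product-rule part cancels cleanly and only the manifestly $\mathcal{O}(h)$ remainders $R^T_{vw},R^T_{wu},R^T_{vu}$ survive.
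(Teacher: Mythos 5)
Your proof is correct, and it takes a genuinely different route from the paper's. The paper proves Lemma~\ref{lem_product} by rotating each element onto a right-angled reference simplex and computing $\langle\nabla\mathcal{I}_h[v_hw],\nabla\varphi_\bz\rangle$ node by node, which relies on the coordinate-aligned edge structure of Type-$1$/Type-$2$ triangulations to express everything through directional difference quotients; the right-angle hypothesis is essential to that computation. You instead (a) reduce to $w_h=\mathcal{I}_hw\in V_h$ (all terms only see nodal values or $\mathcal{I}_h w$), (b) convert every discrete term into an ordinary $H^1$ inner product via \eqref{disc_lap} and the nodal nature of $(\cdot,\cdot)_h$, and (c) use the symmetrized identity $\mathcal{I}_h[a_hb_h]-a_hb_h=\tfrac12\sum_{i,j}(a_i-a_j)(b_i-b_j)\varphi_i\varphi_j$ on each simplex, after which the leading (product-rule) contributions cancel pointwise elementwise and only the bubble remainders $R^T_{vw},R^T_{wu},R^T_{vu}$ survive, each of size $\mathcal{O}(h_T)$ under shape regularity. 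I checked the cancellation bookkeeping and the remainder bounds; they are right, and as a consistency check your remainders integrate to zero in $d=1$, recovering the exact identity of Remark~\ref{rem_product}. The notable payoff of your argument is that it nowhere uses the right-angle assumption: the estimate holds on arbitrary shape-regular simplicial meshes, which is strictly more general than the statement as restricted in the paper (the right-angled structure is still needed elsewhere, e.g.\ for the consistency of $\Delta_h$ in Assumption~\ref{ass1}, but not for this lemma in your approach). Two small cosmetic points: the stability you need is the seminorm bound $\|\nabla\mathcal{I}_hw\|_{L^\infty}\leq C\|\nabla w\|_{L^\infty}$, which follows from the elementwise difference-quotient representation of $\nabla\mathcal{I}_hw$ rather than literally from the stated full-norm $W^{1,\infty}$-stability (the paper's own proof uses the same fact implicitly), and the identity $\int_T\nabla(\varphi_i\varphi_j)\dx=\tfrac{|T|}{d+1}(\nabla\varphi_i+\nabla\varphi_j)$ is not needed, since the pointwise bound $|\nabla(\varphi_i\varphi_j)|\leq Ch_T^{-1}$ already suffices.
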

\begin{proof}
$i)$ We first consider Type-$1$ triangulations.
We employ the transformation on the reference simplex $\hat{T} \subset \mathbb{R}^d$ with vertices $\hat{p}_0 = {\bf 0}$, $\hat{p}_i = \alpha_i \mathbf{e}_i$, $i=1,\dots,d$.
Since $\mathcal{T}_h$ is right-angled, to each element $T\in \mathcal{T}_h$, there is a rotation matrix $R_T$ such that the map
$x = \mathcal{R}_T(\hat{x}) = p_0 + R_T \hat{x}$ maps $\hat{T}$ onto $T$ (for suitable $\alpha_i$).
We denote $\hat{u}(\hat{x})= u(\mathcal{R}_T(\hat{x})) =u(x)$ and note the identity $\nabla_x u = R_T \nabla_{\hat{x}} \hat{u}$, since $R_T^T = R_T^{-1}$.
We note that for $\hat{u} \in \mathcal{P}^1(\hat{T})$ it holds that
$$
\partial_{\hat{x}_i} \hat{u}|_{\hat{T}} = \frac{\hat{u}(\hat{p}_i)-\hat{u}(\hat{p}_0)}{\alpha_i}.
$$
Using the definition of the discrete Laplacian we also note that
\begin{equation}\label{deltahat}
(-\Delta_h \mathcal{I}_h [v_h w], \varphi_\bz)_h = (\nabla \mathcal{I}_h [v_h w], \nabla \varphi_\bz) =  \sum_{T\subset \omega_z} (\nabla \mathcal{I}_h [v_h w], \nabla \varphi_\bz)_{T}.
\end{equation}
Using that $\nabla_x u = R_T \nabla_{\hat{x}} \hat{u} =: R_T \hat{\nabla} \hat{u}$ and that $R_T$ is a rotation we observe that
$$
(\nabla \mathcal{I}_h [v_h w], \nabla \varphi_\bz)_{T} = (R_T \hat{\nabla} \mathcal{I}_h [\hat{v}\hat{w}], R_T \hat{\nabla} \hat{\varphi}_{\hat{\bz}})_{\hat{T}}
= (\hat{\nabla} \mathcal{I}_h [\hat{v}\hat{w}], \hat{\nabla} \hat{\varphi}_{\hat{\bz}})_{\hat{T}}.
$$
Hence, we may proceed by considering the the reference element $\hat{T}$, below we drop the $\hat{}$ to simplify the notation.

On each interior triangle there are $d+1$ non-zero basis functions associated with its nodes $p_0,\dots, p_d$ which we denote as  $\varphi_\bz$, $\bz = p_0,\dots, p_d$.
We note that if $\bz = p_0$
$$
\nabla \varphi_\bz = -(\alpha_1^{-1},\dots, \alpha_d^{-1})^T,
$$
and for $\bz = p_i$ with $i> 0$ it holds
$$
\nabla \varphi_\bz = (0, \dots, 0, \alpha_i^{-1},0\dots,0)^T.
$$
Below we consider the two above cases separately.

For $\bz = p_i$, $i> 0$ we rewrite
\begin{align*}
& \{\nabla \mathcal{I}_h [v_h w] \}_i = \partial_{x_i} [v_h w] = \frac{[v_h w](p_i)-[v_h w](p_0)}{\alpha_i}
= \frac{v_h(p_i) (w(p_i)-w(p_0))}{\alpha_i} + \frac{(v_h(p_i)-v_h(p_0)) w (p_0)}{\alpha_i}
\\
& = \frac{v_h(p_i) (w(p_i)-w(p_0))}{\alpha_i} + \frac{(v_h(p_i)-v_h(p_0)) w (p_i)}{\alpha_i} -  \frac{(v_h(p_i)-v_h(p_0)) (w(p_i)- w(p_0))}{\alpha_i}
%\\
%& = \frac{v_h(p_{i-1}) (w(p_i)-w(p_0))}{\alpha_i} + \frac{(v_h(p_i)-v_h(p_0)) w (p_{i-1})}{\alpha_i} -  \frac{(v_h(p_i)-v_h(p_0)) (w(p_i)- w(p_0))}{\alpha_i},
%\\
%& {\blue= \frac{(v_h(p_i)+v_h(p_{i-1}))}{2} \frac{(w(p_i)-w(p_0))}{\alpha_i} + \frac{(v_h(p_i)-v_h(p_0))}{\alpha_i} \frac{(w(p_i)+w(p_{i-1}))}{2}}
\end{align*}

Hence, we deduce that
\begin{align}\label{prod1}
\langle \nabla \mathcal{I}_h [v_h w], \nabla \varphi_\bz\rangle = v_h(\bz)\langle \nabla\mathcal{I}_h w, \nabla \varphi_\bz\rangle  + w(\bz)\langle \nabla v_h, \nabla \varphi_\bz\rangle
- \partial_{x_i} v_h \partial_{x_i} \mathcal{I}_h w. % \varphi_\bz(\bz),
\end{align}

In the case that $\bz = p_0$ we proceed similarly as in the previous case and arrive at
\begin{align}\label{prod2}
\langle \nabla \mathcal{I}_h [v_h w], \nabla \varphi_\bz\rangle = v_h(\bz)\langle \nabla\mathcal{I}_h w, \nabla \varphi_\bz\rangle  + w(\bz)\langle \nabla v_h, \nabla \varphi_\bz\rangle
- \langle  \nabla v_h, \nabla \mathcal{I}_h w\rangle. % \varphi_\bz(\bz).
\end{align}
For each $T\in \mathcal{T}_h$, $\overline{T}=\mathrm{conv}\{p_0,\dots, p_d\}$, noting that $u_h|_T = \sum_{i=0}^d u_{p_i} \varphi_{p_i}:= \sum_{i=0}^d u_i \varphi_i$, we deduce from (\ref{prod1}), (\ref{prod2}) that
\begin{align*}
& 
(\nabla \mathcal{I}_h [v_h w], \nabla u_h)_{T}
= \sum_{i = 0}^{d} u_{i}(\nabla \mathcal{I}_h [v_h w], \nabla \varphi_i)_{T}
\\
& = \sum_{i=0}^{d} \Big( v_i u_i\langle \nabla\mathcal{I}_h w, \nabla \varphi_i\rangle|T| + w_iu_i\langle \nabla v_h, \nabla \varphi_i\rangle|T|\Big)
- u_0 \langle  \nabla v_h, \nabla \mathcal{I}_h w\rangle|T| -  \sum_{i=1}^{d}u_i \partial_{x_i} v_h \partial_{x_i} \mathcal{I}_h w |T|
\\
& := I_T - II_T - III_T.
\end{align*}
By the definition of the discrete Laplacian we deduce that
\begin{align}\label{est_it}
\nonumber
\sum_{T\in\mathcal{T}_h}I_T & = \sum_{T\in\mathcal{T}_h} \left(\big(\nabla\mathcal{I}_h w, \nabla [u_h v_h]\big)_{T} + \big(\nabla v_h, \nabla \mathcal{I}_h [u_h w]\big)_{T}\right)
=  \big(\nabla\mathcal{I}_h w, \nabla [u_h v_h]\big) + \big(\nabla v_h, \nabla \mathcal{I}_h [u_h w]\big)
\\
& = (v_h (-\Delta_h \mathcal{I}_h w), u_h)_h + ((-\Delta_h v_h)\mathcal{I}_h w, u_h)_h.
\end{align}

Next, we consider the term $III_T$
$$
III_T = \sum_{i=1}^{d}u_i \partial_{x_i} v_h \partial_{x_i} \mathcal{I}_h w |T|
= u_0  \langle  \nabla v_h, \nabla \mathcal{I}_h w\rangle |T| + \sum_{i=1}^{d} (u_i - u_0) \partial_{x_i} v_h \partial_{x_i} \mathcal{I}_h w |T|
:= III_{T,1} + III_{T,2}.
$$
We estimate the second term as
$$
III_{T,2} = \sum_{i=1}^{d} \alpha_i \partial_{x_i} u_h \partial_{x_i} v_h \partial_{x_i} \mathcal{I}_h w |T|
\leq C h \|\nabla u_h\|_T \|\nabla v_h \|_T \|\nabla w\|_{L^\infty(T)}.
$$
We add $III_{T,1}$ to $II_T$ and get
\begin{align*}
& II_T+III_{T,1} = 2 u_0  \langle  \nabla v_h, \nabla \mathcal{I}_h w\rangle|T| = 2 \sum_{i=0}^d\frac{u_0}{d+1} \langle  \nabla v_h, \nabla \mathcal{I}_h w\rangle|T|
\\
&
= 2 \sum_{i=0}^d\frac{u_i}{d+1}\langle  \nabla v_h, \nabla \mathcal{I}_h w\rangle|T| +
2\sum_{i=1}^d \frac{(u_0 - u_i)}{d+1}\langle  \nabla v_h, \nabla \mathcal{I}_h w\rangle|T|
\\
&
= 2 (\nabla v_h\nabla \mathcal{I}_h w, u_h)_{T} +
2\sum_{i=1}^d \frac{(u_0 - u_i)}{d+1}\langle  \nabla v_h, \nabla \mathcal{I}_h w\rangle|T|,
\end{align*}
where we used $\varphi_i \equiv \varphi_{p_i}(p_i) = 1$ to deduce the last equality.

We estimate the last term on the right-hand side in the above equality as
$$
\sum_{i=1}^d (u_0 - u_i)\langle  \nabla v_h, \nabla \mathcal{I}_h w\rangle|T|
=
\sum_{i=1}^d \alpha_i \partial_{x_i} u_h \langle  \nabla v_h, \nabla \mathcal{I}_h w\rangle|T|
\leq C h \|\nabla u_h\|_T\|\nabla v_h\|_T\|\nabla w\|_{L^\infty(T)}
$$
Collecting the above estimates for $II_T$, $III_T$, summing over $T\in\mathcal{T}_h$,
noting (\ref{est_it}) and (\ref{deltahat}) along with the identity
$(-\Delta_h \mathcal{I}_h [v_h w], u_h)_h = \sum_{\bz\in\mathcal{N}_h}(\nabla \mathcal{I}_h [v_h w], u_h(\bz)\nabla \varphi_\bz)$
concludes the proof.

$ii)$ For Type-$2$ triangulations we may proceed analogously as in the first part (recall that in this case $d=3$).
For simplicity we assume a uniform partition with mesh size $h$, a generalization to non-uniform partitions is straightforward.
As in the first part, by rotation and translation, all tetrahedra can be mapped onto a reference simplex $\hat{T} = \mathrm{conv}\{\hat{p}_0, \dots, \hat{p}_3\}$
where $\hat{p}_0 = {\bf 0}$, $\hat{p}_1 = h \mathbf{e}_1$, $\hat{p}_2 = h(\mathbf{e}_1 + \mathbf{e}_2)$ and $\hat{p}_3 = h\mathbf{e}_3$.
Consequently we observe that for $\hat{u} \in \mathcal{P}^1(\hat{T})$ it holds that
\begin{align*}
\partial_{\hat{x}_1} \hat{u}|_{\hat{T}}  = \frac{\hat{u}(\hat{p}_1)-\hat{u}(\hat{p}_0)}{h},
\quad
\partial_{\hat{x}_2} \hat{u}|_{\hat{T}}  = \frac{\hat{u}(\hat{p}_2)-\hat{u}(\hat{p}_1)}{h},
\quad
\partial_{\hat{x}_3} \hat{u}|_{\hat{T}}  = \frac{\hat{u}(\hat{p}_1)-\hat{u}(\hat{p}_0)}{h}.
\end{align*}
The rest of the proof follows as in part $i)$ with minor modifications, therefore we omit it.
\end{proof}

\begin{remark}\label{rem_product}
A closer inspection of the proof of Lemma~\ref{lem_product} reveals that in the one dimensional case $\Omega \subset \mathbb{R}^1$ the discrete product rule simplifies to an equality
which is analogous to the continuous product rule:
\begin{align}\label{prod_1d}
(-\Delta_h \mathcal{I}_h [v_h w], u_h)_h = (-\Delta_h v_h)\mathcal{I}_h w, u_h)_h + (v_h (-\Delta_h \mathcal{I}_h w), u_h)_h -2(\nabla v_h \nabla\mathcal{I}_h w, u_h).
\end{align}
\end{remark}

%%%%%%%%%%%%%%%%%%%%%%%%%%%%%%%%%%%%%%%
%%%%%%%%%%%%%%%%%%%%%%%%%%%%%%%%%%%%%%%
%%%%%%%%%%%%%%%%%%%%%%%%%%%%%%%%%%%%%%%
%%%%%%%%%%%%%%%%%%%%%%%%%%%%%%%%%%%%%%%
The discrete product rule (\ref{prod_1d}) is used in the next theorem to conclude convergence of the numerical approximation for $d=1$.
\begin{theorem}[Convergence]\label{thm_converg}
Let Assumption~1 hold and let $\Omega\subset\mathbb{R}^1$. Furthermore, let $\mathbf{U}^0 \rightarrow \mathbf{u}_0$, $\mathbf{V}^0 \rightarrow \mathbf{v}_0$ for $h\rightarrow 0$,
{$\mathcal{E}_h(\mathbf{U}^0, \mathbf{V}^0)\leq C$ and $|{\bf U}^0({\bf z})| = 1$, $\langle\mathbf{U}^0({\bf z}), \mathbf{V}^0({\bf z}) \rangle = 0$  for ${\bf z}\in\mathcal{N}_h$}.
Then there exists a weak solution ${\bf u}: [0,T]\times \Omega \rightarrow \mathbb{S}^2$ of (\ref{biwave}) according to Definition~\ref{def_weak}
and a subsequence $\{{\bf u}_h\}_{h,\tau}$ such that for $(\tau,h)\rightarrow 0$
\begin{align*}
{\bf u}_h & \overset{\ast}{\rightharpoonup} {\bf u} && \text{ in } L^{\infty}(0,T; {\bf H}^{1}),
\\
\partial_t {\bf u}_h & \overset{\ast}{\rightharpoonup} \partial_t {\bf u} && \text{  in } L^{\infty}(0,T; {\bf L}^{2}).
\end{align*}
\end{theorem}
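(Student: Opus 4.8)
The plan is to run the usual compactness argument, with the discrete energy law \eqref{discener} as the sole source of a priori bounds and the non-conforming compactness result of Lemma~\ref{lem_compact} doing the essential work of upgrading weak $H^1$-convergence to strong. Throughout, $(\tau,h)\to0$ with $\tau\le\tilde{C}h^2$ so that Lemmas~\ref{lem_est1}--\ref{lem_exist} apply and the discrete solutions exist, and after each extraction I keep the same (not relabelled) subsequence. From \eqref{discener} and the norm equivalence \eqref{normeq}, the sequences $\{{\bf v}_h^+\}=\{\partial_t{\bf u}_h\}$, $\{\Delta_h\overline{{\bf u}}_h\}$ and $\{\Delta_h{\bf u}_h\}$ are bounded in $L^\infty(0,T;{\bf L}^2)$ and $\tau^2\sum_n\|d_t{\bf V}^n\|_h^2\le C$, the latter giving $\|{\bf v}_h-{\bf v}_h^+\|_{L^2(\Omega_T)}=\mathcal{O}(\tau^{1/2})$. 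Since $|{\bf U}^n(\bz)|=1$ at every node, $\|{\bf u}_h^{\pm}\|_{L^\infty(\Omega_T)}=1$, so $\{{\bf u}_h\}$ is bounded in $L^\infty(0,T;{\bf L}^2)$ and, via the pointwise-in-$t$ estimate \eqref{h1_dbnd} applied to the bound on $\Delta_h{\bf u}_h$, also in $L^\infty(0,T;{\bf H}^1)$. Finally, the various interpolants satisfy $\|{\bf u}_h-{\bf u}_h^{\pm}\|_{L^\infty({\bf L}^2)}+\|{\bf u}_h-\overline{{\bf u}}_h\|_{L^\infty({\bf L}^2)}=\mathcal{O}(\tau)$ and, by the inverse inequality, $\|\nabla({\bf u}_h-\overline{{\bf u}}_h)\|_{L^\infty({\bf L}^2)}=\mathcal{O}(\tau h^{-1})=\mathcal{O}(h)$.

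Applying Lemma~\ref{lem_compact} componentwise to ${\bf u}_h$ (whose hypotheses hold by the previous step, with $\partial_t{\bf u}_h={\bf v}_h^+$) yields ${\bf u}_h\to{\bf u}$ strongly in $L^2(0,T;{\bf H}^1)$, $\Delta_h{\bf u}_h\rightharpoonup\Delta{\bf u}$ in $L^2(\Omega_T)$ and ${\bf u}\in L^2(0,T;{\bf H}^2)$; by the interpolant estimates, ${\bf u}_h^{\pm}$ and $\overline{{\bf u}}_h$ also converge to ${\bf u}$ in $L^2(0,T;{\bf H}^1)$, and passing to the limit in $(\nabla\overline{{\bf u}}_h,\nabla\phi_h)=-(\Delta_h\overline{{\bf u}}_h,\phi_h)_h$ exactly as in the proof of Lemma~\ref{lem_compact} identifies the weak $L^2(\Omega_T)$-limit of $\Delta_h\overline{{\bf u}}_h$ as $\Delta{\bf u}$. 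Banach--Alaoglu together with the $L^\infty$-in-time bounds gives ${\bf u}_h\overset{\ast}{\rightharpoonup}{\bf u}$ in $L^\infty(0,T;{\bf H}^1)$ and ${\bf v}_h^+\overset{\ast}{\rightharpoonup}\partial_t{\bf u}$ in $L^\infty(0,T;{\bf L}^2)$, the limit being $\partial_t{\bf u}$ because ${\bf v}_h^+=\partial_t{\bf u}_h$ and ${\bf u}_h\to{\bf u}$ in $L^2(\Omega_T)$. For the sphere constraint, $|{\bf u}_h^+|^2$ has nodal values $1$, so $\mathcal{I}_h|{\bf u}_h^+|^2\equiv1$ and $\||{\bf u}_h^+|^2-1\|_{L^1(\Omega)}\le Ch^2\|\nabla{\bf u}_h^+\|^2\le Ch^2$ for a.e.\ $t$; combined with ${\bf u}_h^+\to{\bf u}$ in $L^2(\Omega_T)$ and $|{\bf u}_h^+|\le1$ this forces $|{\bf u}|=1$ a.e.\ in $\Omega_T$, so ${\bf u}:\Omega_T\to\mathbb{S}^2$.

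Next I pass to the limit in the reformulated scheme \eqref{schemeh2} of Lemma~\ref{lem_reform} for test functions $\pmb{\Psi}\in C^\infty_0([0,T);C^\infty_0(\Omega;\mathbb{R}^3))$. The right-hand side of \eqref{schemeh2} vanishes: the first term by $\|{\bf v}_h-{\bf v}_h^+\|_{L^2(\Omega_T)}\to0$ against a bounded factor, the second by the explicit $\tau^{1/2}\mathcal{E}_h(\mathbf{V}^0,\mathbf{U}^0)^{1/2}$. In the first left-hand term, $\partial_t[{\bf u}_h\times\pmb{\Psi}]={\bf v}_h^+\times\pmb{\Psi}+{\bf u}_h\times\partial_t\pmb{\Psi}$ and $({\bf v}_h^+,{\bf v}_h^+\times\pmb{\Psi})_h=\sum_{\bz}\beta_{\bz}\langle{\bf v}_h^+(\bz),{\bf v}_h^+(\bz)\times\pmb{\Psi}(\bz)\rangle=0$, so only $-\int_0^T({\bf v}_h^+,{\bf u}_h\times\partial_t\pmb{\Psi})_h\,\mathrm{d}t$ survives; by the weak$\times$strong pairing (${\bf v}_h^+\rightharpoonup\partial_t{\bf u}$, ${\bf u}_h\to{\bf u}$) and \eqref{convdiscnorm} it converges to $-\int_0^T(\partial_t{\bf u},\partial_t({\bf u}\times\pmb{\Psi}))\,\mathrm{d}t$. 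For the bi-Laplacian term I use the \emph{exact} one-dimensional discrete product rule \eqref{prod_1d} componentwise together with the cross-product structure: in $(\Delta_h\overline{{\bf u}}_h,\Delta_h\mathcal{I}_h[\overline{{\bf u}}_h\times\pmb{\Psi}])_h$ the term $((\Delta_h\overline{{\bf u}}_h)\times\mathcal{I}_h\pmb{\Psi},\Delta_h\overline{{\bf u}}_h)_h$ vanishes nodally because $\langle{\bf a}\times{\bf b},{\bf a}\rangle=0$, leaving
\[
\bigl(\Delta_h\overline{{\bf u}}_h,\Delta_h\mathcal{I}_h[\overline{{\bf u}}_h\times\pmb{\Psi}]\bigr)_h=\bigl(\Delta_h\overline{{\bf u}}_h\times\overline{{\bf u}}_h,\Delta_h\mathcal{I}_h\pmb{\Psi}\bigr)_h+2\bigl(\Delta_h\overline{{\bf u}}_h,\nabla\overline{{\bf u}}_h\times\nabla\mathcal{I}_h\pmb{\Psi}\bigr).
\]
Here $\{\overline{{\bf u}}_h\}$ is bounded in $L^\infty(\Omega_T)$ with $\overline{{\bf u}}_h\to{\bf u}$ in $L^2(\Omega_T)$ and $\Delta_h\overline{{\bf u}}_h\rightharpoonup\Delta{\bf u}$ in $L^2(\Omega_T)$, hence $\Delta_h\overline{{\bf u}}_h\times\overline{{\bf u}}_h\rightharpoonup\Delta{\bf u}\times{\bf u}$ in $L^2(\Omega_T)$; by Assumption~\ref{ass1} and \eqref{est_interpol}, $\Delta_h\mathcal{I}_h\pmb{\Psi}\to\Delta\pmb{\Psi}$ in $L^2(\Omega_T)$ and $\nabla\mathcal{I}_h\pmb{\Psi}\to\nabla\pmb{\Psi}$ in $L^\infty(\Omega_T)$, and the \emph{strong} $H^1$-convergence of Step~2 gives $\nabla\overline{{\bf u}}_h\times\nabla\mathcal{I}_h\pmb{\Psi}\to\nabla{\bf u}\times\nabla\pmb{\Psi}$ strongly in $L^2(\Omega_T)$. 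Thus the right-hand side tends to $\int_0^T\langle\Delta{\bf u}\times{\bf u},\Delta\pmb{\Psi}\rangle+2\langle\Delta{\bf u},\nabla{\bf u}\times\nabla\pmb{\Psi}\rangle\,\mathrm{d}t=\int_0^T\langle\Delta{\bf u},\Delta({\bf u}\times\pmb{\Psi})\rangle\,\mathrm{d}t$, using once more $\langle\Delta{\bf u},(\Delta{\bf u})\times\pmb{\Psi}\rangle=0$. Finally $(\mathbf{V}^0,\mathbf{U}^0\times\pmb{\Psi}(0))_h\to({\bf v}_0\wedge{\bf u}_0,\pmb{\Psi}(0))$ by the hypotheses on the initial data, \eqref{convdiscnorm} and the scalar triple product identity. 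Collecting these limits in \eqref{schemeh2} yields \eqref{weakform} for all $\pmb{\Psi}\in C^\infty_0([0,T);C^\infty_0(\Omega;\mathbb{R}^3))$, which is a dense test class, so ${\bf u}$ is a weak solution in the sense of Definition~\ref{def_weak}; moreover ${\bf u}(0)={\bf u}_0$ follows from ${\bf u}_h(0)=\mathbf{U}^0\to{\bf u}_0$ in ${\bf L}^2$ and weak continuity of the trace at $t=0$, $\partial_t{\bf u}(0)={\bf v}_0$ is recovered in the sense encoded by the boundary term of \eqref{weakform}, and the regularity $\partial_t{\bf u},\Delta{\bf u}\in L^2(\Omega_T)$ and the two weak-$\ast$ convergences were established in Step~2.

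The delicate point, and the reason for the restriction to $d=1$, is the bi-Laplacian term. On the one hand it makes \emph{strong} convergence of $\nabla{\bf u}_h$ indispensable — needed both for the correction term $\nabla{\bf u}\times\nabla\pmb{\Psi}$ arising from the product rule and for the sphere constraint — and this cannot be obtained from the classical Aubin--Lions lemma because the discrete solution is only $H^1$-regular in the non-conforming setting; this is precisely what Lemma~\ref{lem_compact} remedies. On the other hand it requires a discrete product rule for $\Delta_h$: in $d=1$ this is the \emph{exact} identity \eqref{prod_1d}, whereas for $d>1$ Lemma~\ref{lem_product} only delivers it up to an $\mathcal{O}\bigl(h\,\|\nabla u_h\|\,\|\nabla v_h\|\,\|\nabla w\|_{L^\infty}\bigr)$ remainder, whose control would demand an $H^1$-bound on $\nabla{\bf u}_h$ (essentially an $H^2$-bound on the discrete solution) that the scheme does not provide — which is exactly why stabilization is introduced in Section~\ref{sec_stab} to handle higher dimensions.
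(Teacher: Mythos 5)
Your overall strategy is exactly the paper's: a priori bounds from the energy law \eqref{discener}, the non-conforming compactness Lemma~\ref{lem_compact} for strong $H^1$-convergence, the reformulation \eqref{schemeh2} of Lemma~\ref{lem_reform}, the exact one-dimensional product rule \eqref{prod_1d} plus nodal orthogonality to split the bi-Laplacian term, and then weak-times-strong limit passage term by term (your splitting $\bigl(\Delta_h\overline{{\bf u}}_h\times\overline{{\bf u}}_h,\Delta_h\mathcal{I}_h\pmb{\Psi}\bigr)_h$ coincides with the paper's $\bigl(\Delta_h\overline{{\bf u}}_{h},\overline{{\bf u}}_h\times\Delta_h\mathcal{I}_h\pmb{\Psi}\bigr)_h$ by the triple-product identity, and your treatment of the sphere constraint via $\mathcal{I}_h|{\bf u}_h^+|^2\equiv 1$ is a legitimate variant of the paper's Lipschitz argument). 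However, there is one genuine flaw: you carry out the limit passage only for $\pmb{\Psi}\in C^\infty_0([0,T);C^\infty_0(\Omega;\mathbb{R}^3))$, i.e.\ test functions compactly supported \emph{in space}, and then assert that this is "a dense test class" to conclude Definition~\ref{def_weak}(iii). That density claim is false in the topology that matters: the closure of $C^\infty_0(\Omega)$ in $H^2(\Omega)$ is $H^2_0(\Omega)$, not $W^{2,2}(\Omega)$, so \eqref{weakform} tested only against spatially interior functions is strictly weaker than Definition~\ref{def_weak} requires — it is precisely the boundary-touching test functions that encode the natural Neumann conditions \eqref{wave1b} on the bounded domain $\Omega$. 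The paper avoids this by working throughout (Lemma~\ref{lem_reform} and the limit passage) with $\pmb{\Psi}\in C^\infty_0([0,T);C^\infty(\Omega;\mathbb{R}^3))$, smooth up to the boundary, which \emph{is} dense in the $W^{2,2}$ test class; none of your estimates actually uses compact support in space, so the repair is to run your argument in that larger class.

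A smaller point: for the term $\bigl(\Delta_h\overline{{\bf u}}_h\times\overline{{\bf u}}_h,\Delta_h\mathcal{I}_h\pmb{\Psi}\bigr)_h$ you pass to the limit as if it were the continuous $L^2$-pairing, but it is the mass-lumped product $(\cdot,\cdot)_h$; one still has to insert the comparison steps the paper makes explicit (its terms $A_1$--$A_3$): first replace $\Delta_h\mathcal{I}_h\pmb{\Psi}$ by $\Delta\pmb{\Psi}$ inside $(\cdot,\cdot)_h$ via Assumption~\ref{ass1}, then trade $(\cdot,\cdot)_h$ for $(\cdot,\cdot)$ via \eqref{convdiscnorm} and \eqref{est_interpol}, which costs $Ch\|\Delta_h\overline{{\bf u}}_h\|\bigl(\|\nabla\overline{{\bf u}}_h\|+1\bigr)\|\Delta\pmb{\Psi}\|_{W^{1,\infty}}$ and is controlled by the energy bound. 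You did exactly this for the $\partial_t$-term, so this is an omission of detail rather than a missing idea, but as written the weak-times-strong argument does not directly apply to a lumped inner product.
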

\begin{proof}
Estimate (\ref{discener}) and Lemma~\ref{lem_compact} implies the boundedness of the sequence $\{{\bf u}_h\}_{h,\tau>0}$
in $L^{\infty}(0,T; {\bf H}^{1})$. This together with the bound on $\{\partial_t {\bf u}_h \}_{h,\tau>0}$ implies the (sub)convergence
\begin{align}
\label{weakconv}
{\bf u}_h,{\bf u}_h^+,\overline{{\bf u}}_h & \overset{\ast}{\rightharpoonup} {\bf u} &&  \text{  in } L^{\infty}(0,T; {\bf H}^{1}),
\\
\label{strongconv}
{\bf u}_h,{\bf u}_h^+,\overline{{\bf u}}_h & \rightarrow {\bf u} && \text{  in } L^{2}(0,T; {\bf L}^{2}),
\\
\partial_t {\bf u}_h,{\bf v}_h,{\bf v}_h^+ & \overset{\ast}{\rightharpoonup} \partial_t {\bf u} && \text{  in } L^{\infty}(0,T; {\bf L}^{2}),
\\
\label{weakconv_deltah}
\Delta_h {\bf u}_h, \Delta_h \overline{{\bf u}}_h, \Delta_h {\bf u}_h^+ & \overset{\ast}{\rightharpoonup} \Delta {\bf u} && \text{  in } L^{\infty}(0,T; {\bf L}^{2}),
\end{align}
where to show the convergence of ${\bf v}_h$, ${\bf v}_h^+$ to the same limit for $\tau,h\rightarrow 0$ can be concluded thanks to the the numerical dissipation term in (\ref{discener}), since,
\begin{equation}\label{num_damp}
\|{\bf v}_h-{\bf v}_h^+\|_{L^2(0,T; {\bf L}^2)}^2 \leq C \tau^3\sum_{n=1}^N\|d_t \mathbf{V}^n\|_h^2 \leq C \tau \rightarrow 0.
\end{equation}
Similarly we also conclude that the limits of $\Delta_h {\bf u}_h, \Delta_h \overline{{\bf u}}_h, \Delta_h {\bf u}_h^+$ coincide.

Furthermore, Lemma~\ref{lem_compact} implies the strong convergence
\begin{equation}\label{conv_grad}
\nabla {\bf u}_h, \nabla \overline{{\bf u}}_h, \nabla {\bf u}_h^+  \rightarrow \nabla {\bf u} \qquad \text{  in } L^{2}(0,T; {\bf L}^{2}).
\end{equation}

Since $|{\bf u}_h^+({\bf z})| = 1$ for any $T\in\mathcal{T}_h$  and ${\bf z}\in \mathcal{N}_h$ we have for ${\bf x}, {\bf z} \in T$ that $
|{\bf u}_h^+({\bf x})| - 1| = ||{\bf u}_h^+({\bf x})| - |{\bf u}_h^+({\bf z})| | \leq \mathrm{diam}(T) |\nabla {\bf u}_h^+|_T|$. Hence we obtain
$$
\big\||{\bf u}_h^+| - 1\big\|_{L^2(T)} \leq C h \|\nabla {\bf u}_h^+\|_{L^2(T)}.
$$
This implies that  $|{\bf u}_h^+| \rightarrow 1$ almost everywhere in $\Omega_T$ for $(\tau, h)\rightarrow 0$
and consequently $|{\bf u}|=1$ almost everywhere in $\Omega_T$.

To show that the limiting function ${\bf u}$ is a weak solution we pass to the limit in (\ref{schemeh2}) for $(\tau,h)\rightarrow 0$.
Noting the orthogonality of $\Delta_h  \overline{{\bf u}}_h$ and $\Delta_h  \overline{{\bf u}}_h\times \mathcal{I}_h  \pmb{\Psi} $
we deduce by the discrete product rule (\ref{prod_1d}) that %Lemma~\ref{lem_product} for $d=1$ (see Remark~\ref{rem_product}) that
\begin{equation}\label{lap_terms}
\big(\Delta_h \overline{{\bf u}}_{h}, \Delta_h \mathcal{I}_h [\overline{{\bf u}}_h \times \pmb{\Psi}]\big)_h
=
\big(\Delta_h \overline{{\bf u}}_{h}, \overline{{\bf u}}_h \times \Delta_h \mathcal{I}_h \pmb{\Psi}\big)_h + 2 \big(\Delta_h \overline{{\bf u}}_{h}, \nabla  \overline{{\bf u}}_h \times \nabla \mathcal{I}_h \pmb{\Psi}\big).
\end{equation}
%\footnote{This term is simpler if we use the midpoint discretization of $w_h$, in case of implicit Euler the additional term can be identified because we will have damping term involvig $\|\Delta U^{n+1} - \Delta U^{n} \|^2$.}

%{\color{red} Next, we show that $\big(\Delta_h \overline{{\bf u}}_{h}, \overline{{\bf u}}_h \times \Delta_h \mathcal{I}_h \pmb{\Psi}\big)_h = \big(\Delta {\bf u}, {\bf u} \times \Delta  \pmb{\Psi}\big)$.}

To estimate the first term in (\ref{lap_terms}) we bound
\begin{align*}
& |\big(\Delta_h \overline{{\bf u}}_{h}, \overline{{\bf u}}_h \times \Delta_h \mathcal{I}_h \pmb{\Psi}\big)_h - \big(\Delta {\bf u}, {\bf u} \times \Delta  \pmb{\Psi}\big)|
\leq
 |\big(\Delta_h \overline{{\bf u}}_{h}, \overline{{\bf u}}_h \times \Delta_h \mathcal{I}_h \pmb{\Psi}\big)_h - \big(\Delta_h \overline{{\bf u}}_{h}, \overline{{\bf u}}_h \times \Delta \pmb{\Psi}\big)_h|
\\
& \qquad
+ |\big(\Delta_h \overline{{\bf u}}_{h}, \overline{{\bf u}}_h \times \Delta\pmb{\Psi}\big)_h - \big(\Delta_h \overline{{\bf u}}_{h}, \overline{{\bf u}}_h \times \mathcal{I}_h \Delta \pmb{\Psi}\big)|
+ |\big(\Delta_h \overline{{\bf u}}_{h}, \overline{{\bf u}}_h \times \mathcal{I}_h  \Delta\pmb{\Psi}\big) - \big(\Delta_h \overline{{\bf u}}_{h}, \overline{{\bf u}}_h \times \Delta \pmb{\Psi}\big)|
\\
&
\qquad
+ |\big(\Delta_h \overline{{\bf u}}_{h}, \overline{{\bf u}}_h \times \Delta\pmb{\Psi}\big) - \big(\Delta_h \overline{{\bf u}}_{h}, {\bf u} \times \Delta \pmb{\Psi}\big)|
+ | \big(\Delta_h \overline{{\bf u}}_{h}, {\bf u} \times \Delta \pmb{\Psi}\big) - \big(\Delta {\bf u}, {\bf u} \times \Delta \pmb{\Psi}\big)|
%  + |\big(\Delta_h \overline{{\bf u}}_{h}, \overline{{\bf u}}_h \times \Delta  \pmb{\Psi}\big)- \big(\Delta {\bf u}, {\bf u} \times \Delta  \pmb{\Psi}\big)|
\\
& 
:= A_1+A_2+A_3+A_4+A_5.
\end{align*}
We estimate the first term by the Cauchy-Schwarz inequality and $|\mathbf{u}_h| \leq 1$ as %(\ref{normeq})
\begin{align*}
A_1 \leq C \|\Delta_h \overline{{\bf u}}_{h}\|_h \| \Delta_h \mathcal{I}_h \pmb{\Psi} - \Delta \pmb{\Psi}\|_h.
\end{align*}
Hence, noting Assumption~\ref{ass1} and Lemma~\ref{lem_est1} we deduce that
$\lim_{h\rightarrow 0} \int_0^T A_1 \, \mathrm{d}t = 0.$

To estimate the second term we use (\ref{convdiscnorm}), $|{\bf u}_h| \leq 1$ and $W^{1,\infty}$-stability of the interpolation operator
$$
A_2 \leq C h \|\Delta_h \overline{{\bf u}}_{h}\| \|\nabla (\overline{{\bf u}}_h \times \mathcal{I}_h\Delta\pmb{\Psi})\|
\leq C h \|\Delta_h \overline{{\bf u}}_{h}\|\left(\|\nabla \overline{{\bf u}}_h\| + 1 \right)\|\Delta\pmb{\Psi}\|_{W^{1,\infty}}.
$$
Consequently, noting Lemma~\ref{lem_est1} and  $\|\nabla \overline{{\bf u}}_h\|_{L^\infty(0,T; \mathbf{L}^2)}\leq C$ we get that $\lim_{h\rightarrow 0} \int_0^T A_2\, \mathrm{d}t = 0$ .

Next, Lemma~\ref{lem_est1}, $|\mathbf{u}_h| \leq 1$ and (\ref{est_interpol}) imply that
$$
A_3 \leq Ch^2 \|\Delta_h \overline{{\bf u}}_{h}\| |\Delta \pmb{\Psi}|_{W^{2,2}} \leq C h^2,
$$
and consequently $\lim_{h\rightarrow 0} \int_0^T A_3\, \mathrm{d}t = 0$.

Using the Cauchy-Schwarz inequality we estimate
$$
A_4 \leq \|\Delta_h \overline{{\bf u}}_{h}\| \|\overline{{\bf u}}_{h} - {\bf u}\| \|\Delta\pmb{\Psi} \|_{\mathbf{L}^\infty},
%+ \|\nabla \Delta\pmb{\Psi}\|\right)
% \leq Ch\left(\|\Delta_h \overline{{\bf u}}_{h}\|^2 + \|\nabla \overline{{\bf u}}_h \|^2 \right),
$$
and consequently Lemma~\ref{lem_est1} and (\ref{strongconv}) imply that
$\lim_{h\rightarrow 0} \int_0^T A_4\, \mathrm{d}t = 0.$
For the last term we conclude by the weak convergence (\ref{weakconv_deltah}) that $\lim_{\tau, h\rightarrow 0} \int_0^T A_5\, \mathrm{d}t = 0$.

Collecting the above limits for $A_1$, $\dots$, $A_5$ we conclude that
\begin{equation}\label{conv_lapterm}
\lim_{\tau,h\rightarrow 0}\int_0^T \big(\Delta_h \overline{{\bf u}}_{h}, \overline{{\bf u}}_h \times \Delta_h \mathcal{I}_h \pmb{\Psi}\big)_h\mathrm{d}t
 = \int_0^T \big(\Delta {\bf u}, {\bf u} \times \Delta  \pmb{\Psi}\big) \mathrm{d}t.
\end{equation}

The second term in (\ref{lap_terms}) is estimated as
\begin{align*}
& |\big(\Delta_h \overline{{\bf u}}_{h}, \nabla  \overline{{\bf u}}_h \times \nabla \mathcal{I}_h \pmb{\Psi}\big)
- \big(\Delta {{\bf u}}, \nabla  {{\bf u}} \times \nabla  \pmb{\Psi}\big) |
\leq
 |\big(\Delta_h \overline{{\bf u}}_h, \nabla  \overline{{\bf u}}_h \times \nabla \mathcal{I}_h \pmb{\Psi}\big)
- \big(\Delta_h \overline{{\bf u}}_h, \nabla  \overline{{\bf u}}_h \times \nabla  \pmb{\Psi}\big) |
\\
& \qquad
+|\big(\Delta_h \overline{{\bf u}}_h, \nabla  \overline{{\bf u}}_h \times \nabla  \pmb{\Psi}\big)
    - \big(\Delta_h \overline{{\bf u}}_h, \nabla  {\bf u} \times \nabla  \pmb{\Psi}\big) |
+ |\big(\Delta_h \overline{{\bf u}}_h, \nabla  {\bf u}\times \nabla  \pmb{\Psi}\big)
    - \big(\Delta {\bf u}, \nabla {\bf u} \times \nabla  \pmb{\Psi}\big) |
\\
& := B_1 + B_2 + B_3
\end{align*}
Using Lemma~\ref{lem_est1} along with the bound $\|\nabla {\bf u}_h\|_{L^{\infty}(0,T; {\bf L}^{2})}\leq C$ and (\ref{est_interpol}) we estimate
$$
B_1 \leq \|\Delta_h \overline{{\bf u}}_h\| \|\nabla  \overline{{\bf u}}_h\| \|\mathcal{I}_h \pmb{\Psi} - \pmb{\Psi} \|_{\mathbf{W}^{1,\infty}} \leq C h |\pmb{\Psi} |_{\mathbf{W}^{2,\infty}}.
$$
The second term can be estimate by Lemma~\ref{lem_est1} as
$$
B_2 \leq \|\Delta_h \overline{{\bf u}}_h\| \|\nabla \overline{{\bf u}}_h - \nabla {\bf u}\| \|\pmb{\Psi}\|_{\mathbf{W}^{1,\infty}}.
$$
Consequently, noting the convergences (\ref{weakconv_deltah}), (\ref{conv_grad}) we conclude that
\begin{equation}\label{conv_gradterm}
\lim_{\tau,h\rightarrow 0 }\int_0^T \big(\Delta_h \overline{{\bf u}}_{h}, \nabla  \overline{{\bf u}}_h \times \nabla \mathcal{I}_h \pmb{\Psi}\big)\mathrm{d}t
= \int_0^T \big(\Delta {{\bf u}}, \nabla  {{\bf u}} \times \nabla  \pmb{\Psi}\big)\mathrm{d}t.
\end{equation}

The remaining terms can be treated as in \cite[Theorem 4.1]{num_harm09}. I.e., we have that
$$
\lim_{\tau,h\rightarrow 0} \int_0^T \big(\partial_t {\bf u}_{h}, \partial_t [{\bf u}_h \times \pmb{\Psi}]\big)_h \mathrm{d}t
=\int_0^T \big(\partial_t {\bf u}, \partial_t [{\bf u} \times \pmb{\Psi}]\big) \mathrm{d}t,
$$
and
$$
\lim_{\tau,h\rightarrow 0} \big(\mathbf{V}^0, \mathbf{U}^0\times \pmb{\Psi}(0)\big)_h = \big(\mathbf{v}^0, \mathbf{u}^0\times \pmb{\Psi}(0)\big),
$$
for all $\pmb{\Psi}\in  C^\infty_0([0,T); C^\infty(\Omega;\mathbb{R}^3))$.
Furthermore, owing to (\ref{num_damp}), we get for the first term on the right-hand side of (\ref{schemeh2}) that
$$
\lim_{\tau\rightarrow 0}\int_0^T \big([{\bf v}_{h} - {\bf v}_{h}^+], \partial_t [{\bf u}_h \times \pmb{\Psi}]\big)_h \mathrm{d} t= 0.
$$

Collecting last three eqalities along with (\ref{conv_lapterm}), (\ref{conv_gradterm}) we conclude from (\ref{schemeh2}) that the limit ${\bf u}$ satisfies (\ref{weakform}).
\end{proof}

\section{Higher-dimensional case}\label{sec_stab}
In the higher-dimensional case $d>1$ we employ Lemma~\ref{lem_product} (instead of (\ref{lap_terms}) for $d=1$) and get that
\begin{equation}\label{lap_termshd}
\begin{split}
\lim_{\tau,h\rightarrow0}\big(\Delta_h \overline{{\bf u}}_{h}, \Delta_h \mathcal{I}_h [\overline{{\bf u}}_h \times \pmb{\Psi}]\big)_h
 \leq  &
\lim_{\tau,h\rightarrow0} \left(\big(\Delta_h \overline{{\bf u}}_{h}, \overline{{\bf u}}_h \times \Delta_h \mathcal{I}_h \pmb{\Psi}\big)_h
+  2 \big(\Delta_h \overline{{\bf u}}_{h}, \nabla  \overline{{\bf u}}_h \times \nabla \mathcal{I}_h \pmb{\Psi}\big)\right)
\\
& + \lim_{\tau,h\rightarrow0} C  h \|\nabla \Delta_h \overline{{\bf u}}_h\|\|\nabla \overline{{\bf u}}_h\|\|\nabla \pmb{\Psi}\|_{{\bf L}^\infty}.
\end{split}
\end{equation}
When passing to the limit in the discrete formulation we need to show that the perturbation term
$h \|\nabla \Delta_h \overline{{\bf u}}_h\|\|\nabla \overline{{\bf u}}_h\|\|\nabla \pmb{\Psi}\|_{{\bf L}^\infty}$  disappears for $h\rightarrow 0$.
However we lack an $h$-independent estimate for  $\nabla \Delta_h \overline{{\bf u}}_h$.
Estimating the term $\|\nabla \Delta_h \overline{{\bf u}}_h\|$ by an inverse estimate is not viable since it would introduce a factor $h^{-1}$ which would cancel the factor
$h$ in the last term in (\ref{lap_termshd}).
Under the assumption that $h^{\alpha/2}\|\nabla \Delta_h \overline{{\bf u}}_h\| < C$ for some $0<\alpha<2$  (cf. (\ref{discener_stab}) below) we can control the perturbation term as follows
\begin{equation}\label{conv_pert}
h \int_0^T\|\nabla \Delta_h \overline{{\bf u}}_h\|\|\nabla \overline{{\bf u}}_h\|\|\nabla \pmb{\Psi}\|_{{\bf L}^\infty} \mathrm{d}t\leq
C h^{1-\alpha/2} \|\nabla \pmb{\Psi}\|_{L^\infty(0,T; {\bf L}^\infty)} \rightarrow 0 \text{ for } h\rightarrow 0.
\end{equation}

To guarantee a $h$-independent bound of $\nabla \Delta_h \overline{{\bf u}}_h$ we introduce an artificial stabilization term in the numerical formulation (\ref{scheme}).
The corresponding numerical approximation is defined as follows:
given $\mathbf{U}^{0}$, $\mathbf{V}^{0}$ set $\mathbf{U}^{-1} = \mathbf{U}^{0} - \tau \mathbf{V}^0$
and for $n=0,\dots, N-1$ determine $\mathbf{U}^{n+1}, \mathbf{W}^{n+1}\in {\bf V}_h$ such that for all $\pmb{\Phi}, \pmb{\Psi} \in {\bf V}_h$ there holds
\begin{align}\label{scheme_stab}
\bigl(d_t^2 \mathbf{U}^{n+1}, \pmb{\Phi} \bigr)_h + (\nabla \mathbf{W}^{n+1/2}, \nabla \pmb{\Phi})
{+} h^\alpha\left(\Delta_h \mathbf{W}^{n+1/2},  \Delta_h \pmb{\Phi} \right)_{\rev{h}}
& = \bigl( \lambda_w^{n+1} \mathbf{U}^{n+1/2},  \pmb{\Phi}\bigr)_h \,, 
\\ \nonumber
(\mathbf{W}^{n+1/2}, \pmb{\Psi})_h & = (\nabla \mathbf{U}^{n+1/2}, \nabla\pmb{\Psi}) \,,
\end{align}
where $0<\alpha < 2$ and the discrete Lagrange multiplier $\lambda_w^{n+1} \in V_h$ is defined for $\bz \in \N$ as
\begin{eqnarray}
\label{lambdaw_hd}
&& \lambda_w^{n+1}(\bz) =
\left\{ \begin{array}{ll}
0 & \mbox{ if } \mathbf{U}^{n+1/2}(\bz) = {\bf 0},
\\
\frac{\frac{1}{2}\langle \mathbf{V}^0(\bz), \mathbf{V}^1(\bz)\rangle}{\vert \mathbf{U}^{1/2}(\bz)\vert^2}
+ \frac{\bigl( \nabla \mathbf{W}^{1/2}, \mathbf{U}^{1/2}(\bz) \otimes \nabla \varphi_{\bz}\bigr)}{\beta_{\bz}  \vert \mathbf{U}^{1/2}(\bz)\vert^2}
\\
+ \frac{h^\alpha \bigl( \Delta_h \mathbf{W}^{1/2}, \mathbf{U}^{1/2}(\bz)  \Delta_h \varphi_{\bz} \bigr)_{\rev{h}}}{\beta_{\bz}  \vert \mathbf{U}^{1/2}(\bz)\vert^2}
& \text{if } n=0,\, \mathbf{U}^{1/2}(\bz) \neq {\bf 0},
\\
-\frac{\bigl\langle d_t \mathbf{U}^n(\bz), {d_t}\mathbf{U}^{n+1/2}(\bz)\bigr\rangle}{\vert \mathbf{U}^{n+1/2}(\bz)\vert^2}
+ \frac{\bigl( \nabla \mathbf{W}^{n+1/2}, \mathbf{U}^{n+1/2}(\bz) \otimes \nabla \varphi_{\bz} \bigr)}{\beta_{\bz}  \vert \mathbf{U}^{n+1/2}(\bz)\vert^2}
\\
+ \frac{h^\alpha \bigl( \Delta_h \mathbf{W}^{n+1/2}, \mathbf{U}^{n+1/2}(\bz)  \Delta_h \varphi_{\bz} \bigr)_{\rev{h}}}{\beta_{\bz}  \vert \mathbf{U}^{n+1/2}(\bz)\vert^2}
& \mbox{ else}\, .
\end{array}  \right.
\end{eqnarray}

Setting $\pmb{\Phi} = \mathbf{U}^{n+1} - \mathbf{U}^{n}$, $\pmb{\Psi} =  \Delta_h (\mathbf{U}^{n+1} - \mathbf{U}^{n})$ in (\ref{scheme_stab}) and noting that $\mathbf{W}^n = -\Delta_h \mathbf{U}^n$
we deduce that the following stability estimate holds
\begin{equation}\label{discener_stab}
\max_{n=1,\dots,N}\left[\mathcal{E}_h \bigl({\bf V}^{n}, {\bf U}^{n}\bigr) + \frac{h^\alpha}{2} \|\nabla \mathbf{W}^n \|^2\right]
 \leq \mathcal{E}_h \bigl( {\bf V}^0, {\bf U}^0\bigr) + \frac{h^\alpha}{2} \|\nabla \mathbf{W}^0 \|^2\, ,
\end{equation}

\subsection{Existence}
The existence of $\mathbf{U}^{n+1}$, $\mathbf{W}^{n+1} = -\Delta_h \mathbf{U}^{n+1}$ which solve  (\ref{scheme_stab}) follow as in Lemma~\ref{lem_exist}
under the assumption that $\tau \leq C h^{3-\alpha/2}$ is sufficiently small.
The additional term (\ref{lambdaw_hd}) (which imposes the stronger assumption $\tau \leq C h^{3-\alpha/2}$) can be estimated analogously to (\ref{exest2}) as
\begin{align*}
\frac{h^\alpha \bigl( \Delta_h \mathbf{W}^{n+1/2}, \mathbf{U}^{n+1/2}(\bz)  \Delta_h \varphi_{\bz} \bigr)_{\rev{h}}}{\beta_{\bz}  \vert \mathbf{U}^{n+1/2}(\bz)\vert^2}
& \leq C \frac{h^\alpha}{h^2}\|\Delta_h \mathbf{W}^{n+1/2}\|_h \|\mathbf{U}^{n+1/2}\|_h \leq C \frac{h^\alpha}{2h^3}\|\nabla \mathbf{W}^{n+1/2}\| \|\mathbf{U}^{n+1/2}\|_h
\\
& \leq \frac{h^\alpha}{2}\|\nabla \mathbf{W}^{n+1/2}\|^2 +  C h^{\alpha-6}\|\mathbf{U}^{n+1/2}\|_h^2.
\end{align*}
where we used the inverse estimate (\ref{inv_disclap}) and that $\|\Delta_h \varphi_{\bz}\|_{L^\infty}\leq C h^{-2}$ .

Moreover to show (\ref{dt_small}) we estimate the additional term
\begin{align*}
\sum_{{\bf z}\in\mathcal{N}_h} & \beta_{{\bf z}}\frac{h^\alpha \bigl( \Delta_h \mathbf{W}^{n+1/2}, \mathbf{U}^{n+1/2}(\bz)  \Delta_h \varphi_{\bz} \bigr)_{\rev{h}}}{\beta_{\bz}  \vert \mathbf{U}^{n+1/2}(\bz)\vert^2}
d_t \mathbf{U}^{n+1}(\bz)
 \leq Ch^{\alpha-3}\|\nabla \mathbf{W}^{n+1/2}\|\|d_t \mathbf{U}^{n+1}\|_h
\\
& \leq C{h^{\alpha/2-3}} h^{\alpha}\|\nabla \mathbf{W}^{n+1/2}\|^2 + C {h^{\alpha/2 -3}}\|d_t \mathbf{U}^{n+1}\|_h^2 ,
\end{align*}
which can be absorbed in the corresponding terms for sufficiently small $\tau \leq C{h^{3- \alpha/2}}$

\subsection{Convergence}
Under the additional assumption that $h^\alpha\|\nabla \mathbf{W}^0 \|^2 \leq C$
the convergence of the numerical approximation (\ref{scheme_stab}) follows as in Theorem~\ref{thm_converg}
with minor modifications:
instead of (\ref{lap_terms}) we consider (\ref{lap_termshd}) and the perturbation term disappears for $h\rightarrow 0$
thanks to (\ref{conv_pert});
moreover thanks to (\ref{discener_stab}) the stabilization term vanishes in the limit, i.e.,
$h^\alpha\left(\Delta_h \mathbf{W}^{n+1/2},  \Delta_h \pmb{\Phi} \right)_{\rev{h}} \leq h^\alpha \|\Delta_h \mathbf{W}^{n+1/2}\| \| \Delta_h \pmb{\Phi}\| \rightarrow 0$ for $h\rightarrow 0$. The remainder of the proof is the same as in the case $d=1$.

\rev{
\begin{remark}\label{rem_scal}
For practical purposes it is convenient to consider a scaled version of the stabilized scheme \eqref{scheme_stab} where
the stabilization term is scaled by an additional constant $C_{{\tt stab}} > 0$, i.e., one employs the stabilization term $C_{{\tt stab}} h^\alpha\left(\Delta_h \mathbf{W}^{n+1/2},  \Delta_h \pmb{\Phi} \right)_{h}$ in \eqref{scheme_stab}.
It is obvious that that the convergence proof also holds for the scaled version of the stabilised scheme with arbitrarily small ($\tau$, $h$-independent) constant $C_{{\tt stab}}$.
\end{remark}
}

\section{Conservative scheme}\label{sec_cons}

We present an alternative approximation scheme that preserves the discrete energy, i.e, the scheme satisfies a (discrete) energy equality (\ref{discener2}) below.

Let $\mathbf{U}^0, \mathbf{V}^{0}\in\mathbf{V}_h$, $\langle\mathbf{V}^{0}({\bf z}), \mathbf{U}^{0}({\bf z}) \rangle =0$, $\mathbf{U}^{-1}=\mathbf{U}^0-\tau\mathbf{V}^0$, for $n\geq1$
define $\mathbf{V}^{n} = d_t \mathbf{U}^{n}$ and
$\mathbf{U}^{n^*+\frac{1}{2}} = \frac{1}{2}\left(\mathbf{U}^{n+1} + \mathbf{U}^{n-1}\right)$, $\mathbf{W}^{n^*+\frac{1}{2}} = \frac{1}{2}\left(\mathbf{W}^{n+1} + \mathbf{W}^{n-1}\right)$.
For $n=0,\dots, N-1$ determine $\mathbf{U}^{n+1}$, $\mathbf{W}^{n+1}$ as the solution of
\begin{align}\label{scheme_cons}
\bigl(d_t\mathbf{V}^{n+1}, \pmb{\Phi} \bigr)_h + (\nabla \mathbf{W}^{n^*+\frac{1}{2}}, \nabla
\pmb{\Phi}) & = \bigl( \widetilde{\lambda}_w^{n+1} \mathbf{U}^{n^*+\frac{1}{2}},  \pmb{\Phi}\bigr)_h \,,
\\ \nonumber
(\mathbf{W}^{n^*+\frac{1}{2}}, \pmb{\Psi})_h & = (\nabla \mathbf{U}^{n^*+\frac{1}{2}}, \nabla\pmb{\Psi}) \,,
\end{align}
with
$$
\widetilde{\lambda}_w^{n+1}(\bz) =
\left\{ \begin{array}{ll}
0 & \text{if } \mathbf{U}^{n^*+1/2}(\bz) = {\bf 0},
\\
-\frac{\bigl\langle \mathbf{V}^0(\bz), \mathbf{V}^{1}(\bz)\bigr\rangle - \frac{1}{2}|\mathbf{V}^0(\bz)|^2}{\vert \mathbf{U}^{0^*+\frac12}(\bz)\vert^2}
+ \frac{\bigl( \nabla \mathbf{W}^{0^*+\frac12}, \mathbf{U}^{0^*+\frac12}(\bz) \otimes \nabla \varphi_{\bz}
\bigr)}{\beta_{\bz}  \vert \mathbf{U}^{0^*+\frac12}(\bz)\vert^2}
& \text{if } n=0,\, \mathbf{U}^{0^*+\frac12}(\bz) \neq {\bf 0}
\\
-\frac{\bigl\langle  \mathbf{V}^n(\bz), \mathbf{V}^{n+1}(\bz)\bigr\rangle}{\vert \mathbf{U}^{n^*+\frac12}(\bz)\vert^2}
+ \frac{\bigl( \nabla \mathbf{W}^{n^*+\frac12}, \mathbf{U}^{n^*+\frac12}(\bz) \otimes \nabla \varphi_{\bz}
\bigr)}{\beta_{\bz}  \vert \mathbf{U}^{n^*+\frac12}(\bz)\vert^2}
& \text{else},
\end{array}  \right.
$$
where the modification for $n=0$, accounts for the fact that possibly $|{\bf U}^{-1}|\neq 1$ if $\mathbf{V}_0 \neq {\bf 0}$, cf. \cite{num_harm09}, \cite{swave}.

Setting $\pmb{\Phi} = \mathbf{U}^{n^*+\frac12}(\bz)\varphi_{\bz}$ in (\ref{scheme_cons}) then implies that $\vert {\bf U}^{n+1}(\bz)\vert^2 = \vert {\bf U}^n(\bz)\vert^2$ $\forall \bz\in\mathcal{N}_h$, $n=0,\dots,N-1$.

Setting $\pmb{\Phi} = \mathbf{U}^{n+1} - \mathbf{U}^{n-1} = \mathbf{V}^{n+1} + \mathbf{V}^{n}$, $\pmb{\Psi} =  \Delta_h (\mathbf{U}^{n+1} - \mathbf{U}^{n-1})$ in (\ref{scheme_cons})
implies that above scheme conserves the discrete energy
$\widetilde{\mathcal{E}}_h \bigl({\bf V}^{n}, {\bf U}^{n}, {\bf U}^{n-1}\bigr) :=
\frac{1}{2}\left(\|\mathbf{V}^{n}\|_h^2 + \frac{1}{2}\left[ \|\Delta_h \mathbf{U}^{n}\|^2 + \|\Delta_h \mathbf{U}^{n-1}\|_h^2\right]\right)$,
i.e.,  for $n=1,\dots,N$ the following equality holds:
\begin{equation}\label{discener2}
\widetilde{\mathcal{E}}_h \bigl({\bf V}^{n}, {\bf U}^{n}\bigr)
 = \frac{1}{2}\left(\|\mathbf{V}^{0}\|_h^2 + \frac{\tau}{2} \left(\widetilde{\lambda}_w^{1}\mathbf{V}^{0}, \mathbf{V}^{0}\right)_h
   + \frac{1}{2}\left[ \|\Delta_h \mathbf{U}^{0}\|^2 + \|\Delta_h \mathbf{U}^{-1}\|_h^2\right]\right) .
\end{equation}

Compared to (\ref{discener}) the numerical damping term
$\frac{\tau^2}{2} \sum_{n=1}^N \Vert d_t {\bf V}^{n}\Vert^2_h$ is not present in (\ref{discener2}) and the energy of the initial data is exactly preserved at all times.
Due to the lack of numerical damping term $\frac{\tau^2}{2} \sum_{n=1}^N \Vert d_t {\bf V}^{n}\Vert^2_h$ the convergence of the conservative scheme is not clear.
In particular we are not able to show that the right hand side in the (counterpart of) Lemma~\ref{lem_reform} vanishes for $\tau \rightarrow 0$.

\section{Numerical experiments}\label{sec_exp}

\rev{In this section we perform numerical simulations using the scheme (\ref{scheme}), its stabilized variant (\ref{scheme_stab}) and the energy conservative scheme (\ref{scheme_cons}).
The results for all three schemes were similar on finite time intervals.
In the long-time limit the results will inevitably diverge (for fixed discretization parameters), since
the numerical solutions of (\ref{scheme}) (as well as of  (\ref{scheme_stab})) will eventually become stationary,
due to the effects of the numerical dissipation.}

%%%%%%%%%%%%%%%%%%%%%%%%%%%%%%%%%%%%%%%%%%%%%%%%%%%%%%%%%%%%%%%
\subsection{Solution of the nonlinear systems}\label{sec_fixpnt}
%%%%%%%%%%%%%%%%%%%%%%%%%%%%%%%%%%%%%%%%%%%%%%%%%%%%%%%%%%%%
For every $n \geq 1$, we solve the nonlinear problem (\ref{scheme})-(\ref{lambdaw})
by a fixed point algorithm with projection, the corresponding modifications of the algorithm for (\ref{scheme_stab}),  (\ref{scheme_cons}) are straightforward).
Given a tolerance $\epsilon > 0$ we proceed
for $n=1,\dots,N$ as follows:
set ${\bf u}^{0}= {\bf u}^{0}_{*} = {\bf U}^{n}$, ${\bf w}^{0}= {\bf w}^{0}_{*} = -\Delta_h {\bf U}^{n}$ and iterate for $l=0,\dots$
\begin{enumerate}
\item
\begin{itemize}
\item[(i)]
Find ${\bf u}^{l+1}, \mathbf{w}^{l+1} = -\Delta_h {\bf u}^{l+1}  \in {\bf V}_h$ such that for all $\pmb{\Phi} \in {\bf V}_h$
$$
\begin{array}{rcl}
\frac{1}{\tau}\Bigl( {\bf u}^{l+1}, \pmb{\Phi}\Bigr)_h +
\frac{\tau}{2} \Bigl(\nabla {\bf w}^{l+1}, \nabla \pmb{\Phi}\Bigr)
- \frac{\tau}{2}\Bigl(\lambda^{l}_{w*} {\bf u}^{l+1}, \pmb{\Phi} \Bigr)_h
& = &
\frac{1}{\tau}\Bigl( 2{\bf U}^{n}- {\bf U}^{n-1} , \pmb{\Phi}\Bigr)_h
- \frac{\tau}{2} \Bigl(\nabla {\bf W}^{n}, \nabla \pmb{\Phi}\Bigr)\\
& & +
\frac{\tau}{2} \Bigl({\lambda}^{l}_{w*}  {\bf U}^{n}, \pmb{\Phi} \Bigr)_h\, .
\qquad
\end{array}
$$
\item[(ii)]
For all ${\bf z}\in \mathcal{N}_{h}$ set
\begin{align*}
{\bf u}^{l+1}_{*}({\bf z}) & = \frac{{\bf u}^{l+1}({\bf z})}{|{\bf u}^{l+1}({\bf z})|},
\\
{\bf w}^{l+1}_{*}({\bf z}) & = -\Delta_h {\bf u}^{l+1}_{*}({\bf z})
\, .
\end{align*}
\end{itemize}
\item
Stop the iterations once
$$
\max_{{\bf z}\in\mathcal{N}_{h}}| {\bf u}^{l+1}({\bf z}) - {\bf u}_{*}^{l}({\bf z}) | \leq \epsilon \,.
$$
\item
Set ${\bf U}^{n+1} = {\bf u}_{*}^{l+1}$ and proceed to the next time level.
\end{enumerate}
The Lagrange multiplier $\lambda_{w*}^{l} $ is a counterpart of (\ref{lambdaw}) which replaces the unknown value ${\bf U}^{n+1}$, ${\bf W}^{n+1}$ by the respective solutions ${\bf u}^{l}_{*}$, ${\bf w}^{l}_{*}$ which are known from the previous
iteration of the nonlinear solver, i.e., we denote $\mathbf{U}^{n+1/2}_* = \frac{1}{2}\big({\bf u}_{*}^{l+1} + {\bf U}^n\big)$, $d_t \mathbf{U}^{n+1}_* = \frac{1}{\tau}\big({\bf u}_{*}^{l+1} - {\bf U}^n\big)$,
and set
\begin{eqnarray}
&& \lambda_{w*}^{l}(\bz) =
\left\{ \begin{array}{ll}
0 & \mbox{if } \mathbf{U}_*^{n+1/2}(\bz) = {\bf 0}\,,
\\
\frac{\frac{1}{2}\langle \mathbf{V}^0(\bz), d_t \mathbf{U}^1_*(\bz) \rangle}{\vert \mathbf{U}_*^{1/2}(\bz)\vert^2}
+ \frac{\bigl( \nabla \mathbf{W}_*^{1/2}, \mathbf{U}_*^{1/2}(\bz) \otimes \nabla \varphi_{\bz}\bigr)}{\beta_{\bz}  \vert \mathbf{U}_*^{1/2}(\bz)\vert^2}
& \text{if } n=0,\, \mathbf{U}_*^{1/2}(\bz) \neq {\bf 0},
\\
-\frac{\bigl\langle d_t \mathbf{U}^n(\bz), {d_t}\mathbf{U}_*^{n+1/2}(\bz)\bigr\rangle}{\vert \mathbf{U}_*^{n+1/2}(\bz)\vert^2}
+ \frac{\bigl( \nabla \mathbf{W}_*^{n+1/2}, \mathbf{U}_*^{n+1/2}(\bz) \otimes \nabla \varphi_{\bz}\bigr)}{\beta_{\bz}  \vert \mathbf{U}_*^{n+1/2}(\bz)\vert^2}
& \mbox{else}\, .
\end{array}  \right.
\end{eqnarray}
%xxx moreover the above formulation takes into account the accumulation of errors in the norms xxx, cf. \cite{swave}.
We choose the stopping tolerance $\epsilon = 10^{-8}$ in all experiments below.

\begin{remark}[Projection step]
The use of an additional projection step in the fixed-point iteration was introduced in \cite{num_move} for the related second-order problems.
Without the projection step (step $(1)-(ii)$ in the fixed-point algorithm above) the fixed-point algorithm requires to choose $\tau =\mathcal{O}(h^2)$ to guarantee convergence.
Similarly as in \cite{num_move}, the algorithm with the projection step requires a weaker time-step restriction $\tau =\mathcal{O}(h)$ to achieve convergence.
Compared to the second order harmonic wave map equation \cite{num_move}, the bi-harmonic problem requires roughly $100$-times smaller time-step
to similar convergence speed of the fixed-point algorithm, however, we also observe a faster evolution of the solution than in the case of second order problems.
\end{remark}

\subsection{Numerical experiments}

We consider the initial condition on $\Omega = \left(-1,1\right)^2$
\begin{eqnarray}
&&{\bf u}_0({\bf x}) =
\left\{
\begin{array}{ll}
(4{\bf x} A, A^{2} - 4|{\bf x}|^{2})/(A^{2} + 4|{\bf x}|^{2})
      & \mbox{for } |{\bf x}| \le 1/2,
\\
(0, 0, -1)
      & \mbox{for } |{\bf x}| \geq 1/2,
\\
\end{array}\right.
\end{eqnarray}
where $A = (1- 4|\x|)^{4}$.
In the case of second order problems, i.e., the harmonic heat flow and wave map equations, respectively,
the evolution starting from the above initial condition leads to a (discrete) finite-time blow-up, cf., \cite{num_move} and the references therein.

We set $\mathbf{U}^0 = \mathcal{I}_h {\bf u}_0$, $\mathbf{W}^0 = -\Delta_h \mathbf{U}^0$ and $\mathbf{V}^0=\mathbf{0}$.
%{\red check: $h$-dependence of $\|\mathbf{W}^0\|_h$}
We compute the problem for different mesh sizes $h=2^{k}$  with corresponding time-step steps $\tau = 2^{6-k} \times 10^{-4}$ for $k=5,6,7,8$.
In Figure~\ref{fig_blowup} we display the evolution of the computed energies as well as of the gradient $\|\nabla \mathbf{U}^n\|_{\mathbf{L}^\infty}$;
\rev{for comparison we also display the (conserved) energy (\ref{discener2}) for $k=5,6,7$ and the gradient for $k=7$ (the corresponding graphs for $k=5,6$ were qualitatively similar)
of the solution computed with the conservative scheme (\ref{scheme_cons})}.
Compared to the case of wave map into spheres, cf. for instance \cite[Example 4.3]{num_move}, we do not observe formation of singularities (discrete blow-up),
i.e., the gradient of the numerical solution remains bounded independently of the mesh size. Snapshots of the solution computed with $h=2^{-5}$ at different times are displayed in
Figure~\ref{fig_uh} (the figure is colored by the magnitude of the $z$-component of the solution).
\begin{figure}[htp!]
\includegraphics[width=0.45\textwidth]{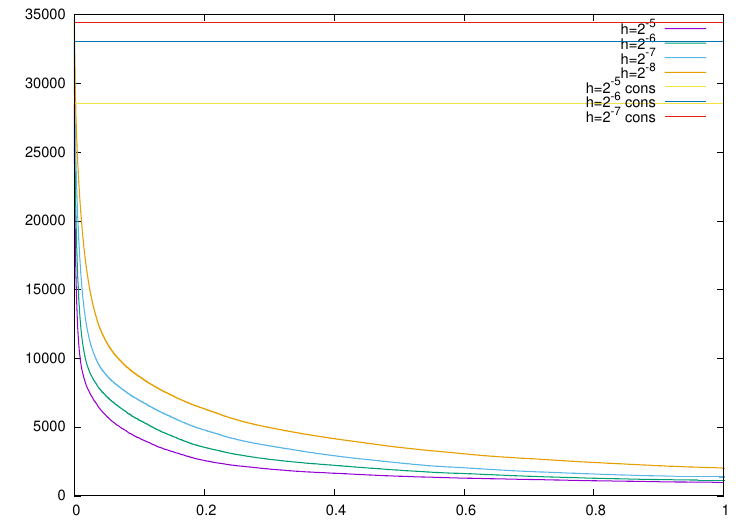}
\includegraphics[width=0.45\textwidth]{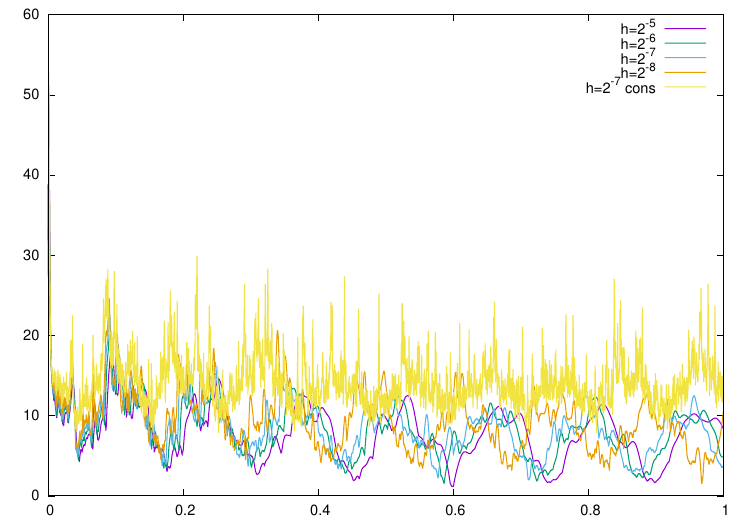}
\caption{Evolution of the discrete energy (left) and of the gradient $\|\nabla \mathbf{U}^n\|_{\mathbf{L}^\infty}$ (right) for $h=2^{k}$, $k=5,6,7,8$.}
\label{fig_blowup}
\end{figure}

\begin{figure}[htp!]
\includegraphics[width=0.32\textwidth]{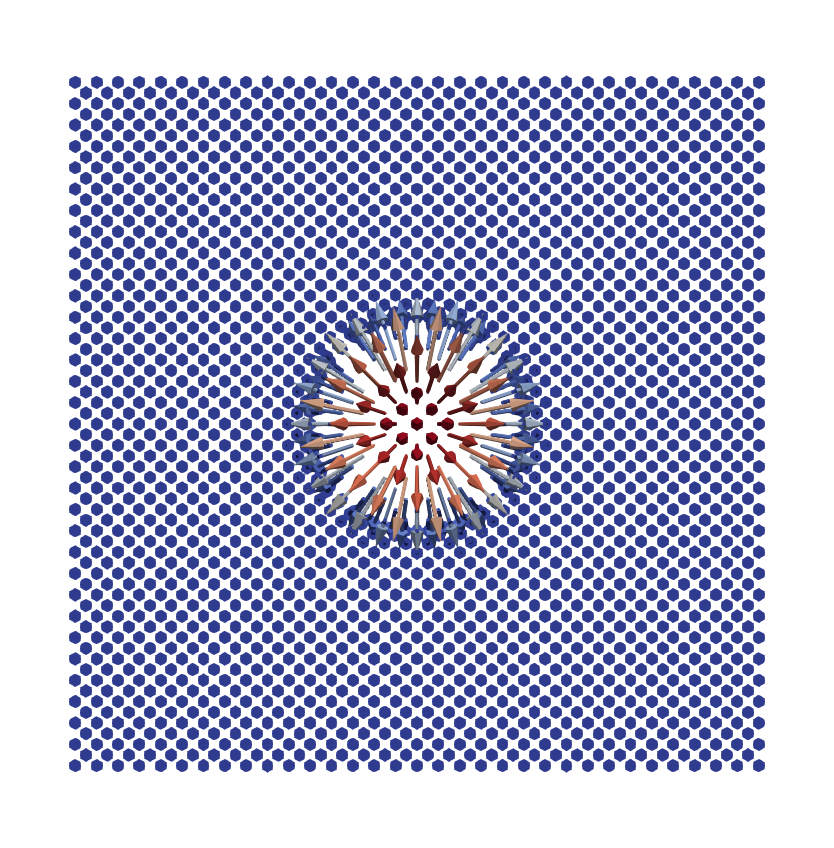}
\includegraphics[width=0.32\textwidth]{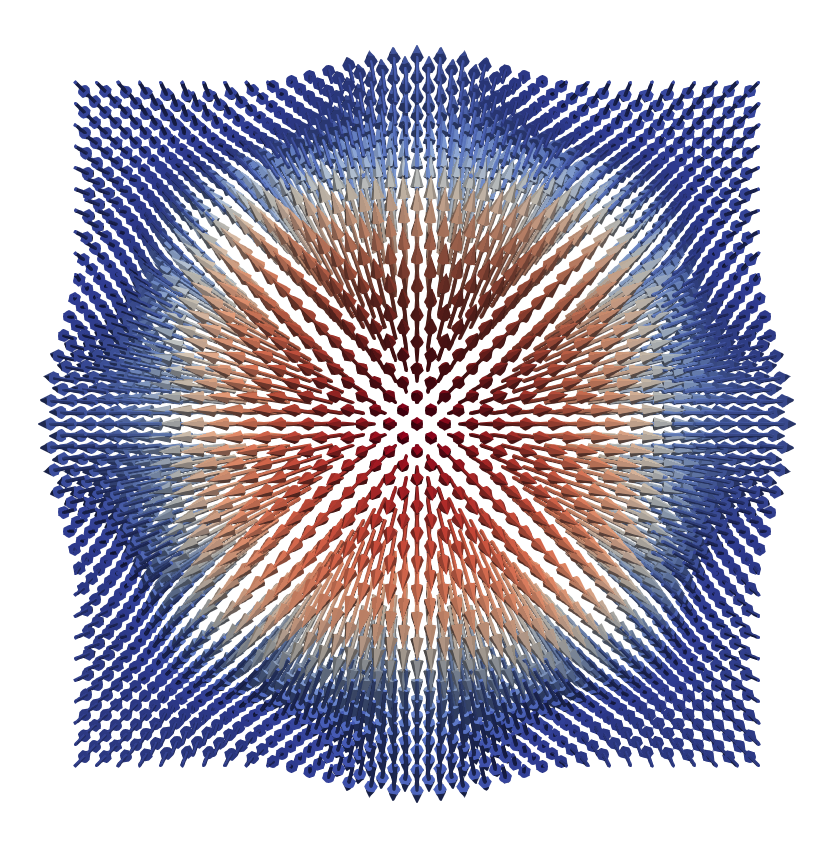}
\includegraphics[width=0.32\textwidth]{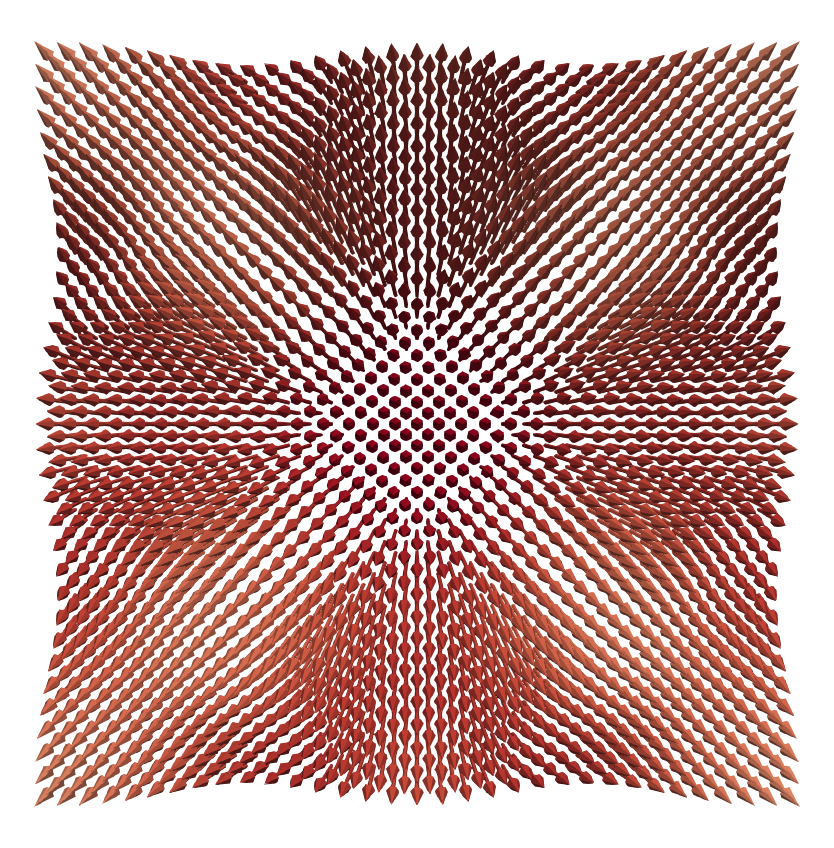}
\includegraphics[width=0.32\textwidth]{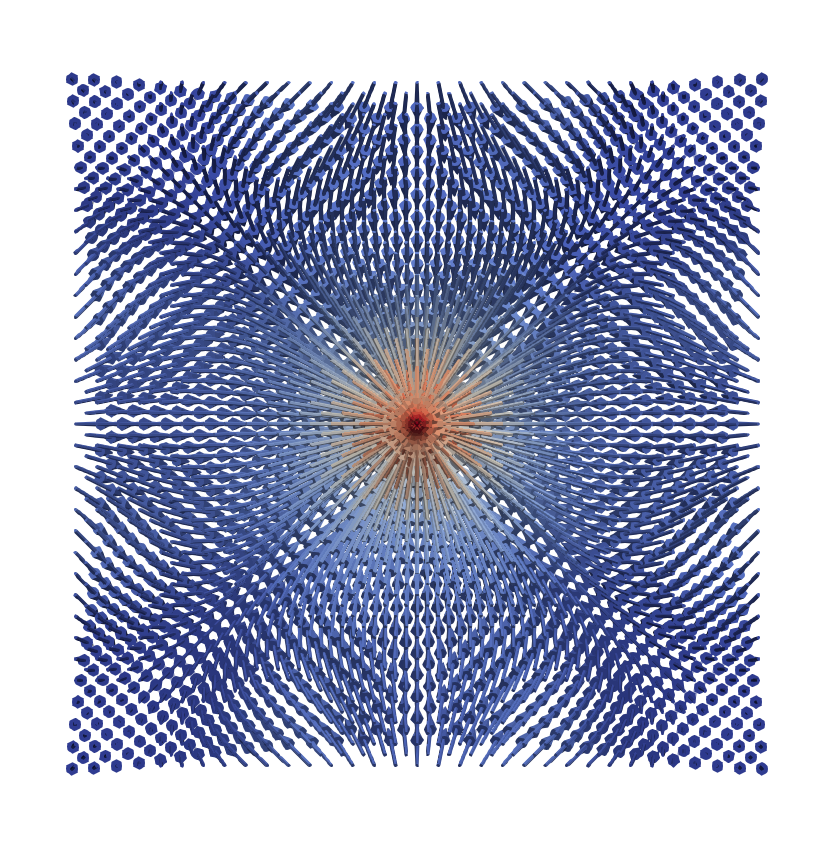}
\includegraphics[width=0.32\textwidth]{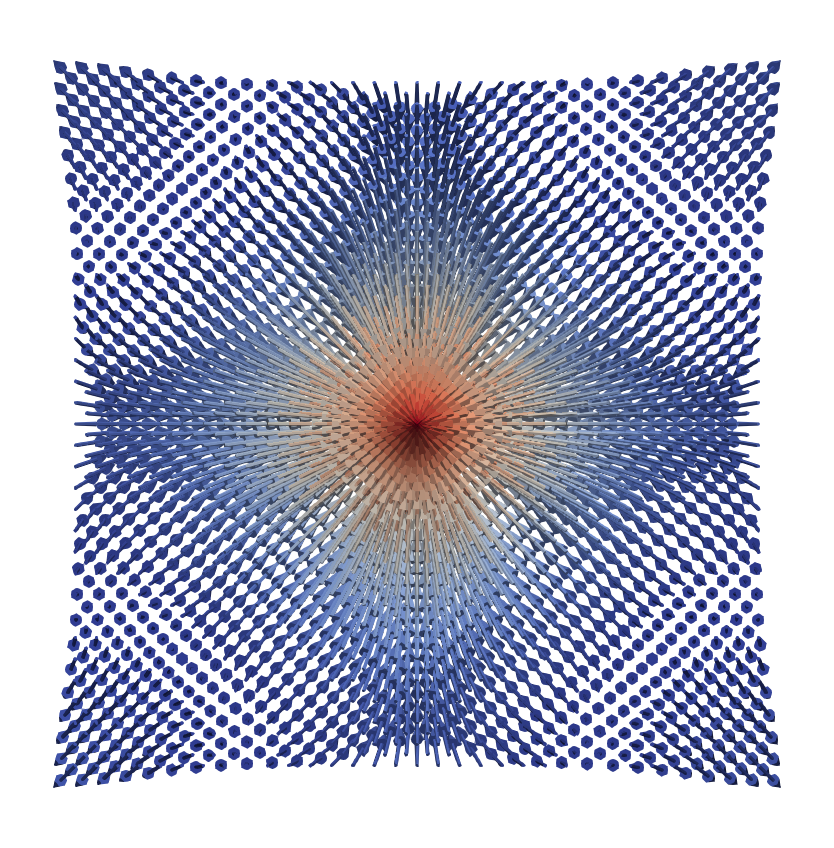}
\includegraphics[width=0.32\textwidth]{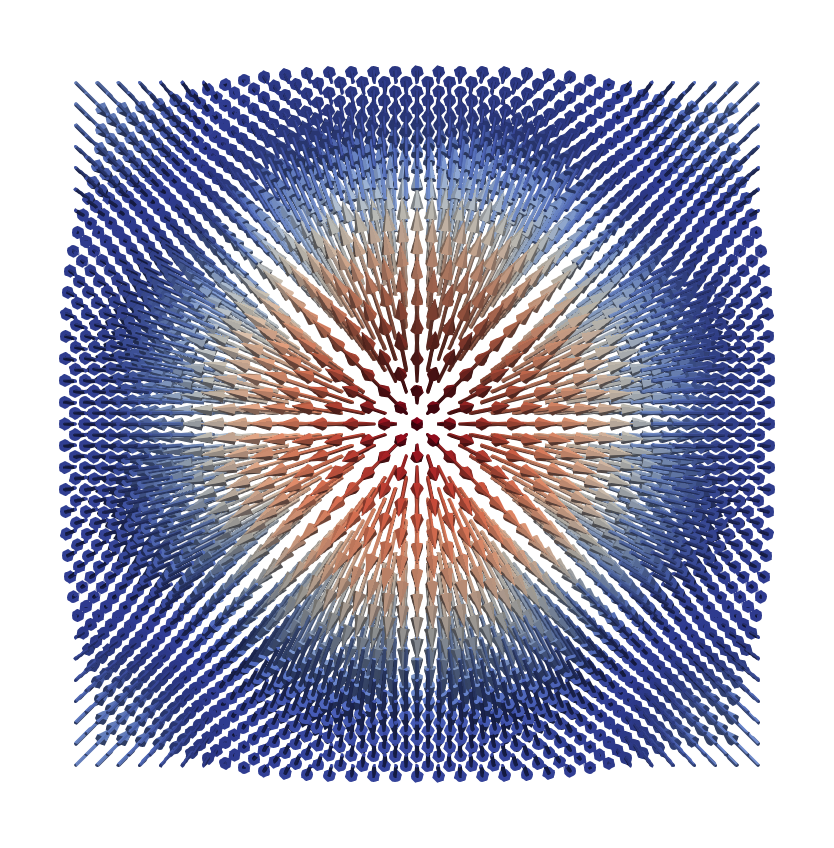}
\caption{Numerical solution computed with $h=2^{-5}$ at time $t=0,0.02,0.045,0.095,0.13,0.24$ (from left to right, top to bottom).}
\label{fig_uh}
\end{figure}

\rev{
Next, we study the effect of the stabilisation term. We consider the same setup as above except for a smaller time step $\tau = 2^{4-k} \times 10^{-4}$.
In Figure~\ref{fig_stab} we compare the results computed with the scaled variant of the stabilised scheme (\ref{scheme_stab}) (cf. Remark~\ref{rem_scal}) with $\alpha=2$ and $C_{\tt stab} = 0.01$
and the scheme without stabilisation, i.e., with $C_{\tt stab} = 0$. The evolution of the discrete energy $\mathcal{E}_h\bigl({\bf V}^{n}, {\bf U}^{n}\bigr)$  and of the energy of the stabilized scheme
$\mathcal{E}_h\bigl({\bf V}^{n}, {\bf U}^{n}\bigr) + \frac{C_{\tt stab} h^\alpha}{2}\|\nabla \mathbf{W}^n\|^2$ (cf. (\ref{discener_stab}))
is displayed in Figure~\ref{fig_stab} on the left and the evolution of the evolution of $\|\nabla \mathbf{W}^n \|^2$ is displayed on the right (we cut-off the initially large values of $\|\nabla \mathbf{W}^n \|^2$ and consider a shorter time interval for better comparison).
Both schemes produce very similar results. In particular, we observe qualitatively similar evolution of $\nabla \mathbf{W}^{n}$ which indicates that
the effect of stabilisation is negligible in the considered setting.
}
%\mathcal{E}_h \bigl({\bf V}^{n}, {\bf U}^{n}\bigr) + \frac{h^\alpha}{2} \|\nabla \mathbf{W}^n \|^2
\begin{figure}[htp!]
\includegraphics[width=0.45\textwidth]{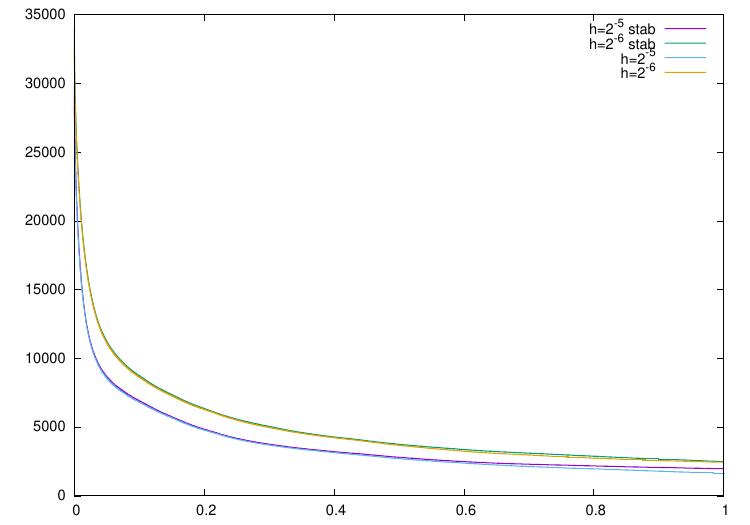}
\includegraphics[width=0.45\textwidth]{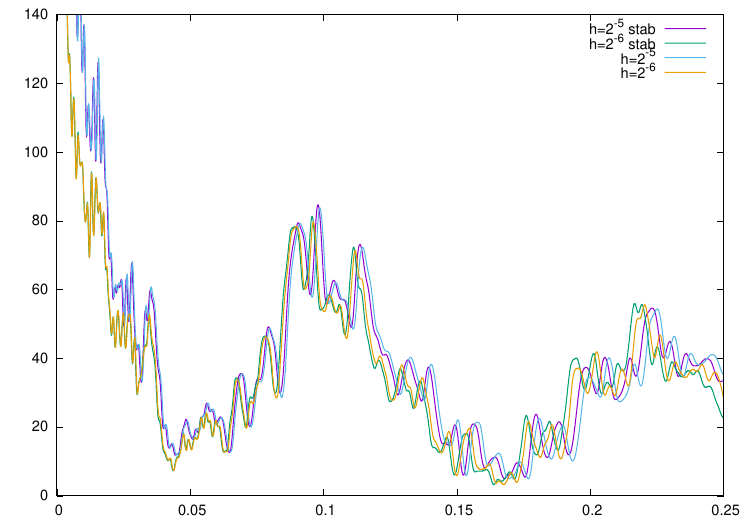}
\caption{Evolution of the respective discrete energies $\mathcal{E}_h$ and  $\mathcal{E}_{h, {\tt stab}}$ (left) and of $\frac{C_{\tt stab} h^2}{2}\|\nabla \mathbf{W}^n\|^2$ (right) for $h=2^{k}$, $k=5,6$.}
\label{fig_stab}
\end{figure}

\rev{In the final experiment we consider a problem with singular initial condition
that goes beyond the theoretical part of this paper: we set
${\bf u}_0(\mathbf{x}) = (1,0,0,)$ for $\mathbf{x} \in \Omega = \left(-1,1\right)^2$ and ${\bf u}_0(\mathbf{x}) = \frac{\mathbf{x}}{|\mathbf{x}|}$ for $\mathbf{x} \in \partial \Omega$
along with a Dirichlet boundary condition ${\bf u}(t)|_{\partial\Omega} = {\bf u}_0|_{\partial\Omega}$ and the homogeneous Neumann boundary condition for ${\bf w}$.
The solution in this setting takes the form ${\bf u} = (u_1, u_2, 0)$, i.e., the third component of the solution is zero.
Similarly as in the previous experiment we observe no significant differences between the results obtained with and without stabilization (despite the fact that the value $h^2\|\nabla \mathbf{W}^0\|^2$ scales with $h^{-3}$), see Figure~\ref{fig_stab_m02}. Note, that due to the singularity of the initial condition, the initial energy is infinite
and convergence of the conservative scheme (\ref{scheme_cons}) is not to be expected in this setting, the corresponding results are therefore not presented.
%For visualisation purposes we only display the results for $h=2^{k}$, $k=5,6$ but we observe good agreement for $k=7$ as well.
The evolution of the numerical solution in displayed in Figure~\ref{fig_uh_stab2}. Starting from the given initial condition a singularity quickly forms on the left side of the domain
and the solution then ''oscillates'' around this singularity. The formation of the singularity
(which is an analogue of singularities exhibited by harmonic second order problems, cf. \cite[Example 4.4]{num_move}) is a consequence of the fact that,
the solution remains in the $xy$-plane due to the choice of the initial condition.
Consequently, we also observe that the $L^\infty$-norm of the gradient of the discrete solution scales with the spatial mesh size as $h^{-1}$, see Figure~\ref{fig_stab_m02} (right).
\begin{figure}[htp!]
\includegraphics[width=0.32\textwidth]{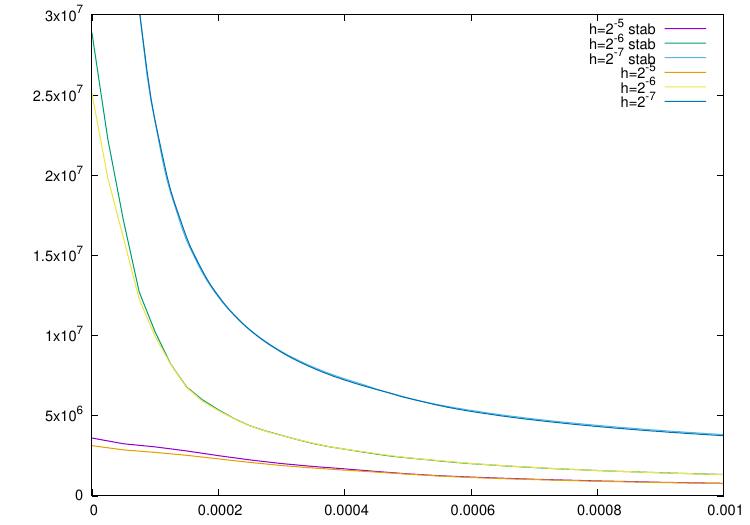}
\includegraphics[width=0.32\textwidth]{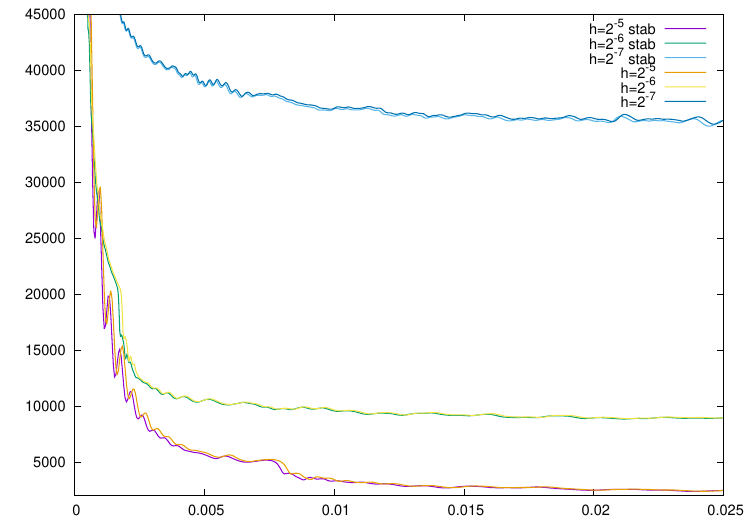}
\includegraphics[width=0.32\textwidth]{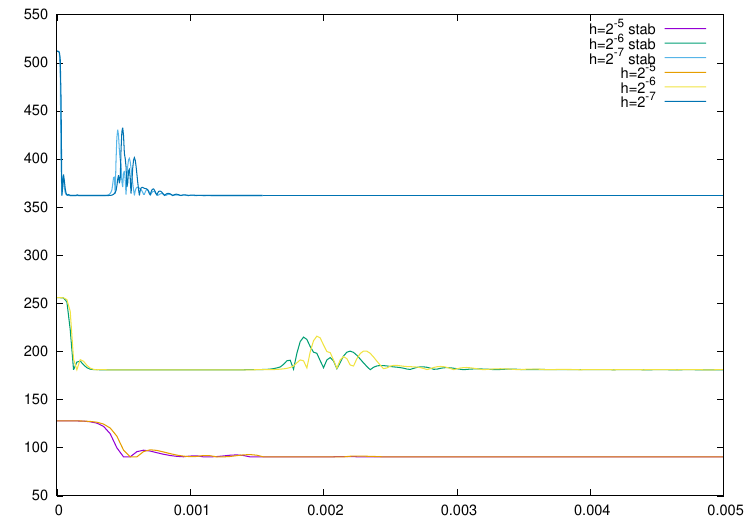}
\caption{Evolution of the respective discrete energies $\mathcal{E}_h$ and  $\mathcal{E}_{h, {\tt stab}}$ (left) and of $\frac{C_{\tt stab} h^2}{2}\|\nabla \mathbf{W}^n\|^2$ (middle)
and the evolution of $\|\nabla \mathbf{U}^n\|_{\mathbf{L}^\infty}$ (right) for $h=2^{k}$, $k=5,6,7$.}
\label{fig_stab_m02}
\end{figure}
\begin{figure}[htp!]
\includegraphics[width=0.32\textwidth]{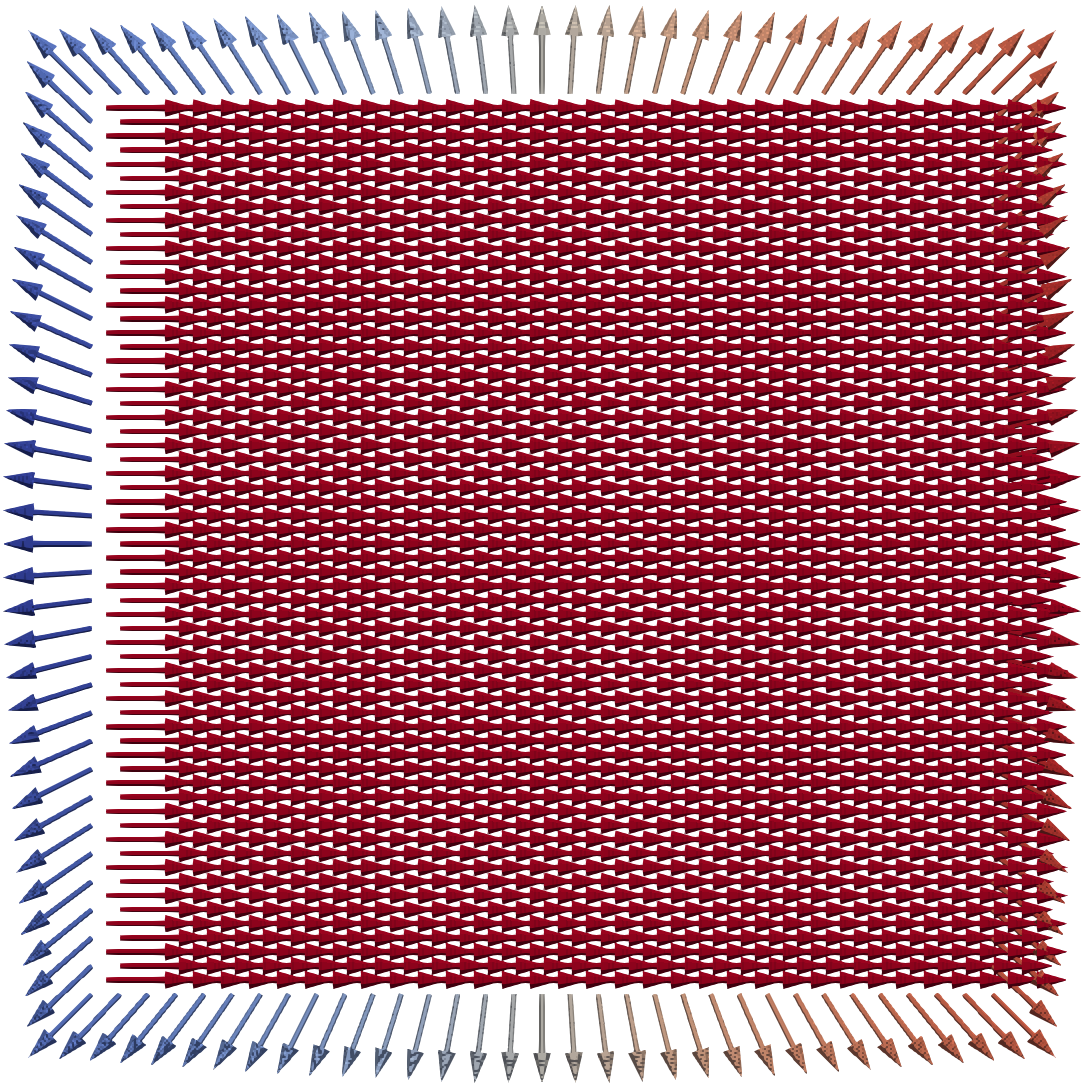}
\includegraphics[width=0.32\textwidth]{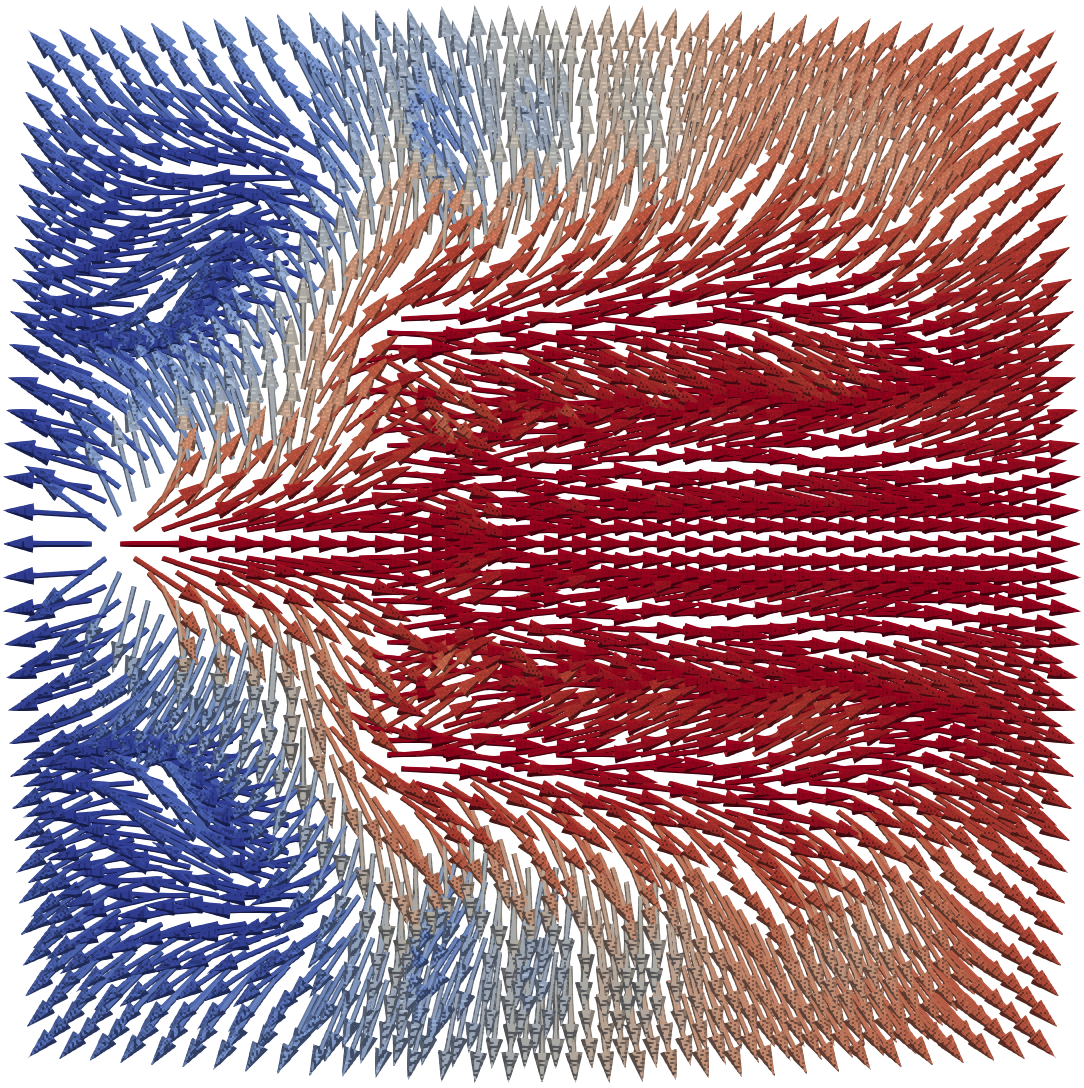}
\includegraphics[width=0.32\textwidth]{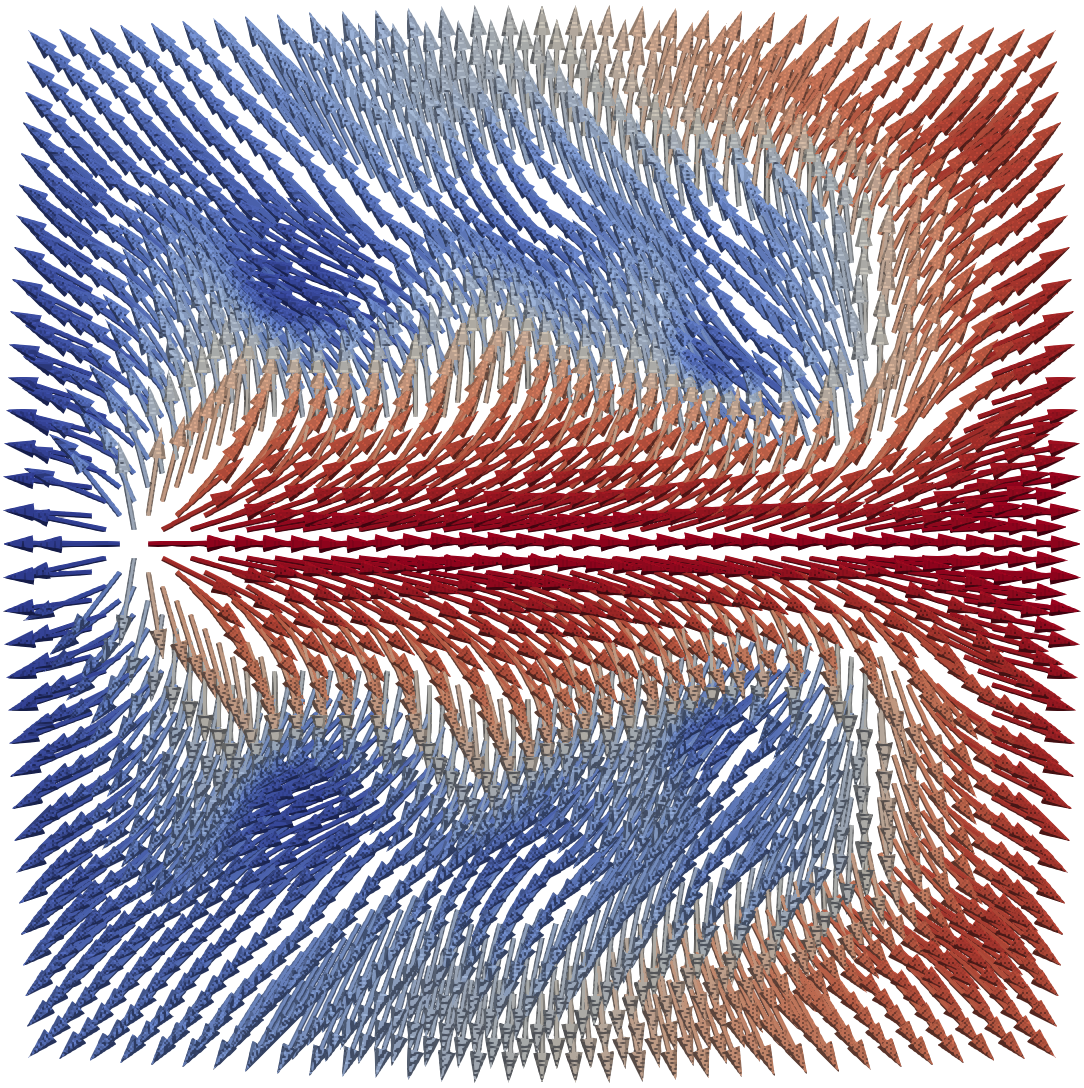}
\includegraphics[width=0.32\textwidth]{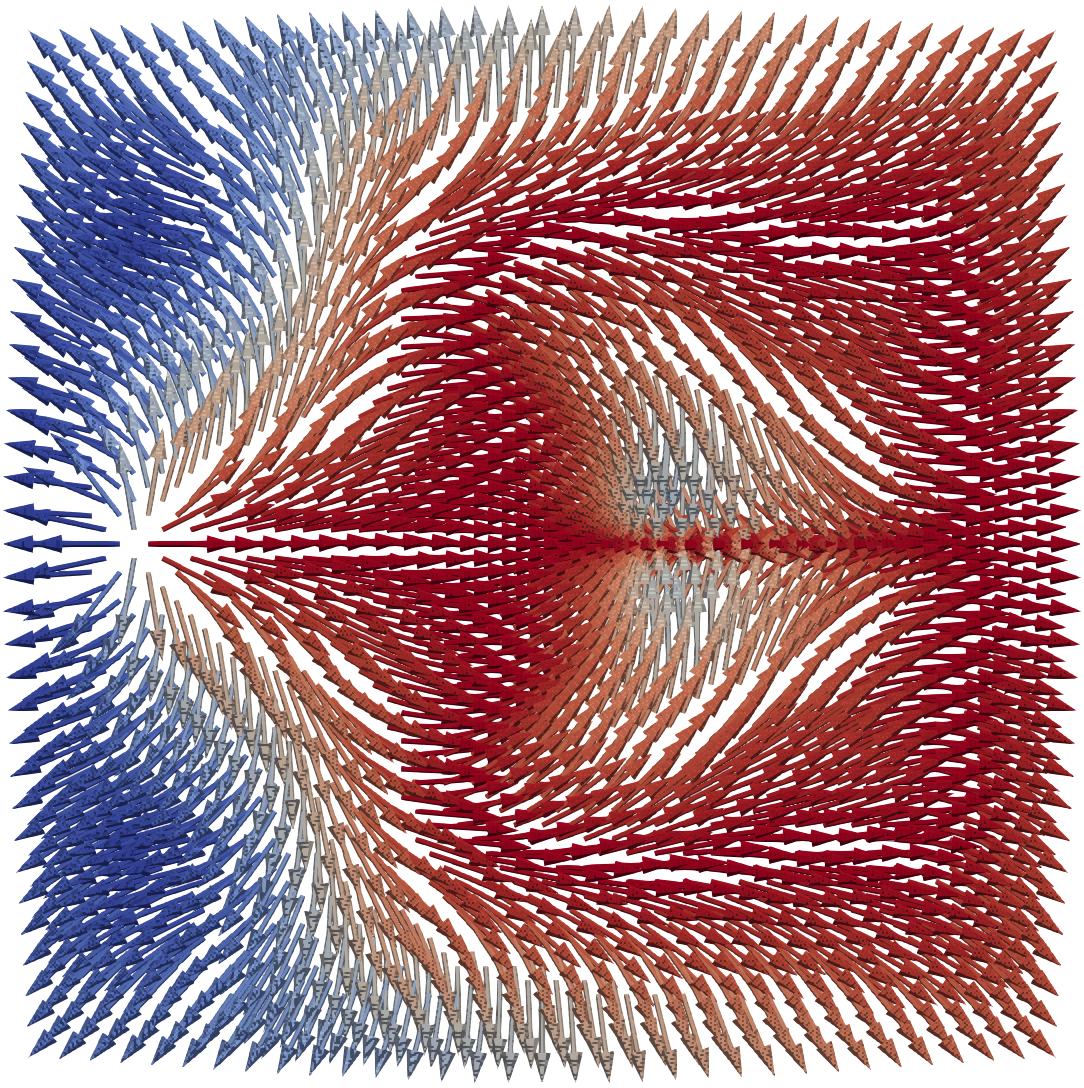}
\includegraphics[width=0.32\textwidth]{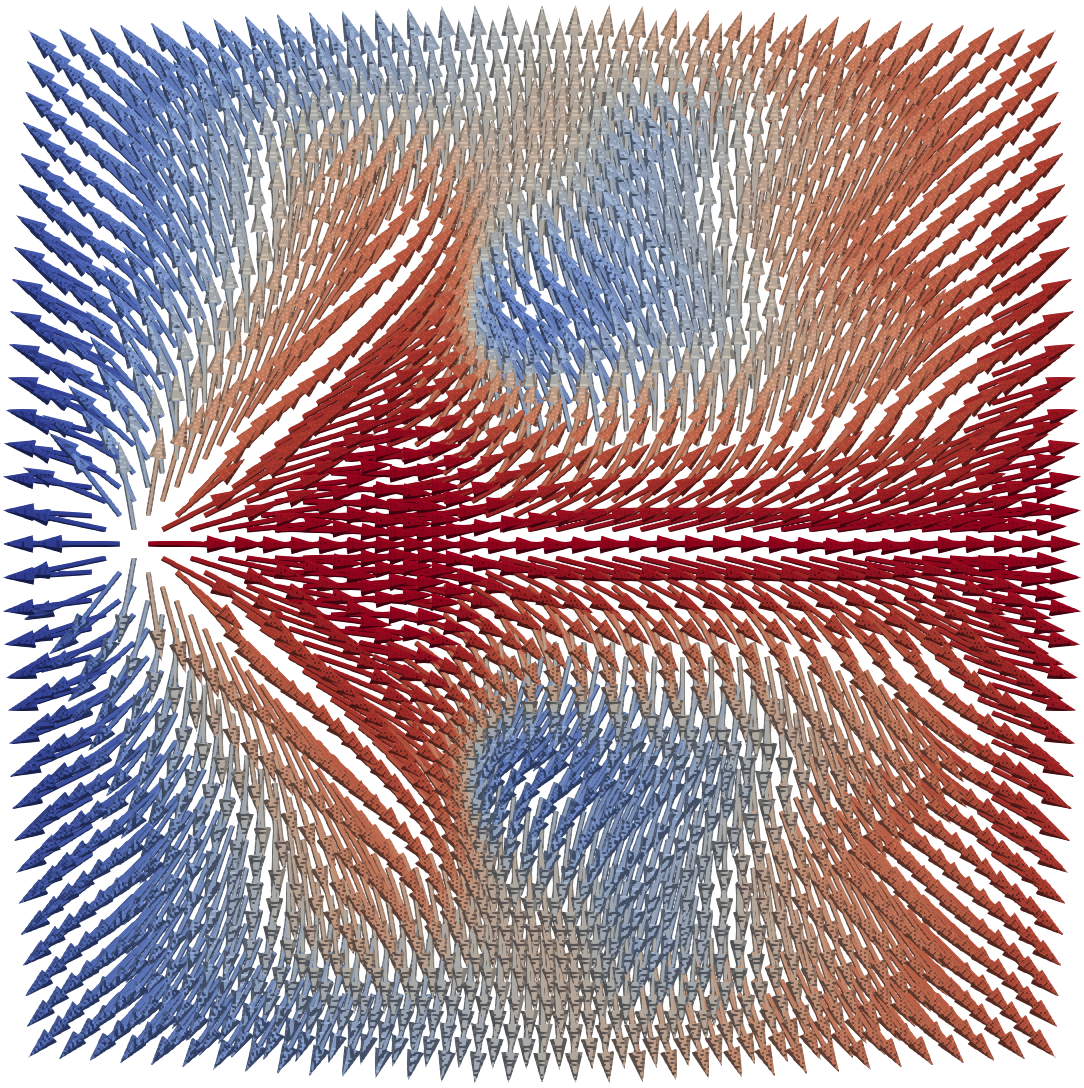}
\includegraphics[width=0.32\textwidth]{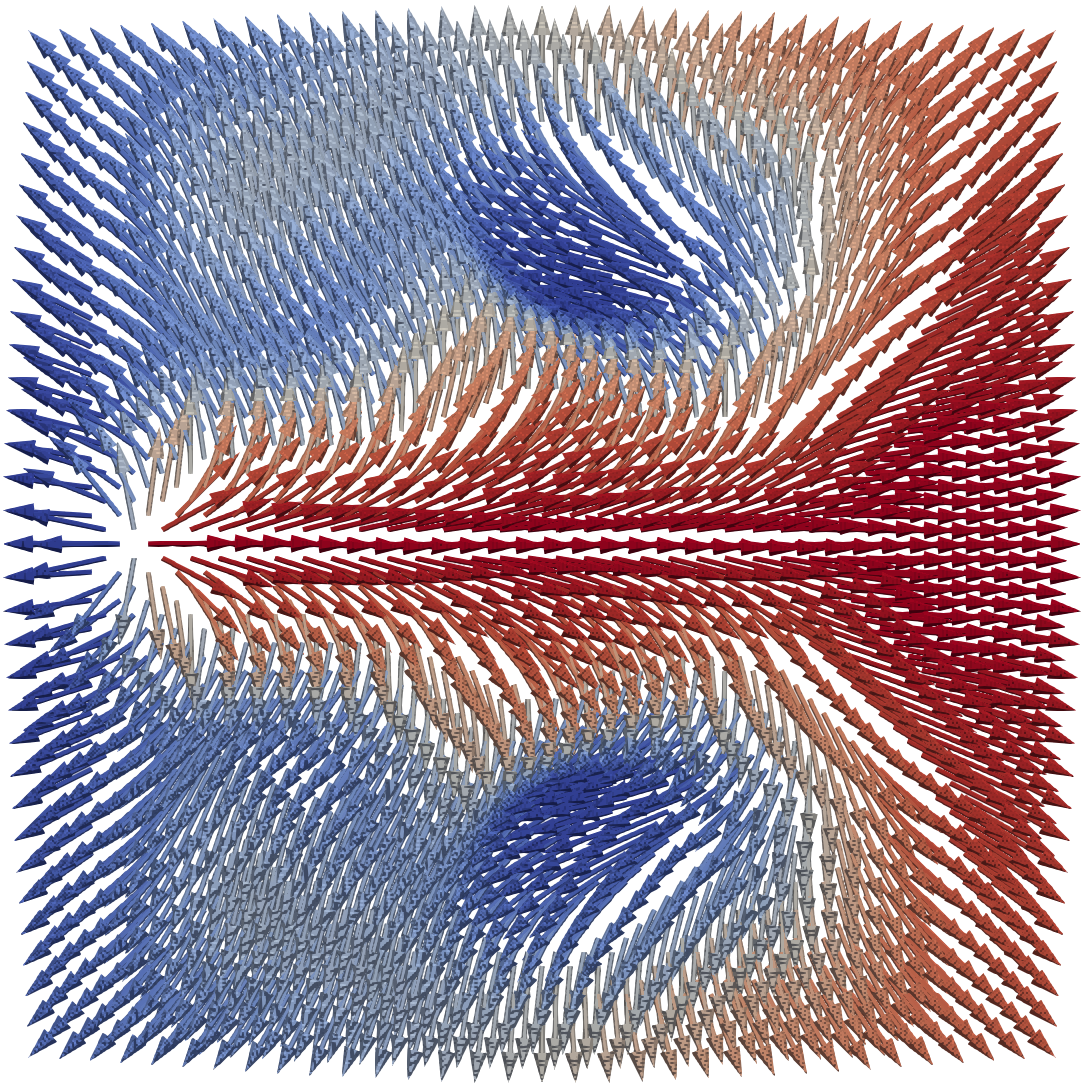}
\caption{Numerical solution computed with $h=2^{-5}$ (without stabilization) at time $t=0,0.005,0.05,0.075,0.09,0.095$ (from left to right, top to bottom).}
\label{fig_uh_stab2}
\end{figure}

}

\begin{comment}
\subsection{Comparison with harmonic wave map}

\subsection{Dirichlet vs Neumann bc}

\subsection{Experiments in 3D}

In this section, we study a 3D problem with Dirichlet boundary condition $\mathbf{u}_{D}(\x) = \mathbf{u}(t,\x)|_{\partial \Omega} =   \frac{\x}{|\x|}$,
on the cube $\Omega = (-0.5, 0.5)^{3}$.
The infinite energy initial data for all the experiments was (cf.~\cite{bar05}, \cite{num_move})
$$
\mathbf{u}_{0}(\x)|_{\Omega\setminus\partial\Omega} = (1,0,0) \quad \mbox{on } \overline{\Omega} \setminus\partial\Omega\,, \qquad
\mbox{and} \quad {\bf u}_0({\bf x}) = {\bf u}_D({\bf x}) \quad \mbox{on } \partial
\Omega\, .
$$
\end{comment}

\bibliographystyle{plain}
\bibliography{refs}

\end{document}